\theoremstyle{plain}
\newtheorem{thrm}{Theorem}[section]
\newtheorem{lemma}[thrm]{Lemma}
\newtheorem{prop}[thrm]{Proposition}
\newtheorem{rmrk}[thrm]{Remark}
\newtheorem{dfn}[thrm]{Definition}
\newtheorem*{hyp}{HYPOTHESIS}
\begin{document}

\newcommand{\SL}{\mathcal L^{1,p}( D)}
\newcommand{\Lp}{L^p( Dega)}
\newcommand{\CO}{C^\infty_0( \Omega)}
\newcommand{\Rn}{\mathbb R^n}
\newcommand{\Rm}{\mathbb R^m}
\newcommand{\R}{\mathbb R}
\newcommand{\Om}{\Omega}
\newcommand{\Hn}{\mathbb H^n}
\newcommand{\aB}{\alpha B}
\newcommand{\eps}{\ve}
\newcommand{\BVX}{BV_X(\Omega)}
\newcommand{\p}{\partial}
\newcommand{\IO}{\int_\Omega}
\newcommand{\bG}{\boldsymbol{G}}
\newcommand{\bg}{\mathfrak g}
\newcommand{\bz}{\mathfrak z}
\newcommand{\bv}{\mathfrak v}
\newcommand{\Bux}{\mbox{Box}}
\newcommand{\e}{\ve}
\newcommand{\X}{\mathcal X}
\newcommand{\Y}{\mathcal Y}
\newcommand{\W}{\mathcal W}
\newcommand{\la}{\lambda}
\newcommand{\vf}{\varphi}
\newcommand{\rhh}{|\nabla_H \rho|}
\newcommand{\Ba}{\mathcal{B}_\gamma}
\newcommand{\Za}{Z_\beta}
\newcommand{\ra}{\rho_\beta}
\newcommand{\na}{\nabla_\beta}
\newcommand{\vt}{\vartheta}
\newcommand{\La}{\mathcal{L}}

\numberwithin{equation}{section}

\newcommand{\RN} {\mathbb{R}^N}
\newcommand{\Sob}{S^{1,p}(\Omega)}
\newcommand{\Dxk}{\frac{\partial}{\partial x_k}}
\newcommand{\Co}{C^\infty_0(\Omega)}
\newcommand{\Je}{J_\ve}
\newcommand{\beq}{\begin{equation}}
\newcommand{\bea}[1]{\begin{array}{#1} }
\newcommand{\eeq}{ \end{equation}}
\newcommand{\ea}{ \end{array}}
\newcommand{\eh}{\ve h}
\newcommand{\Dxi}{\frac{\partial}{\partial x_{i}}}
\newcommand{\Dyi}{\frac{\partial}{\partial y_{i}}}
\newcommand{\Dt}{\frac{\partial}{\partial t}}
\newcommand{\aBa}{(\alpha+1)B}
\newcommand{\GF}{\psi^{1+\frac{1}{2\alpha}}}
\newcommand{\GS}{\psi^{\frac12}}
\newcommand{\HFF}{\frac{\psi}{\rho}}
\newcommand{\HSS}{\frac{\psi}{\rho}}
\newcommand{\HFS}{\rho\psi^{\frac12-\frac{1}{2\alpha}}}
\newcommand{\HSF}{\frac{\psi^{\frac32+\frac{1}{2\alpha}}}{\rho}}
\newcommand{\AF}{\rho}
\newcommand{\AR}{\rho{\psi}^{\frac{1}{2}+\frac{1}{2\alpha}}}
\newcommand{\PF}{\alpha\frac{\psi}{|x|}}
\newcommand{\PS}{\alpha\frac{\psi}{\rho}}
\newcommand{\ds}{\displaystyle}
\newcommand{\Zt}{{\mathcal Z}^{t}}
\newcommand{\XPSI}{2\alpha\psi \begin{pmatrix} \frac{x}{|x|^2}\\ 0 \end{pmatrix} - 2\alpha\frac{{\psi}^2}{\rho^2}\begin{pmatrix} x \\ (\alpha +1)|x|^{-\alpha}y \end{pmatrix}}
\newcommand{\Z}{ \begin{pmatrix} x \\ (\alpha + 1)|x|^{-\alpha}y \end{pmatrix} }
\newcommand{\ZZ}{ \begin{pmatrix} xx^{t} & (\alpha + 1)|x|^{-\alpha}x y^{t}\\
     (\alpha + 1)|x|^{-\alpha}x^{t} y &   (\alpha + 1)^2  |x|^{-2\alpha}yy^{t}\end{pmatrix}}
\newcommand{\norm}[1]{\lVert#1 \rVert}
\newcommand{\ve}{\varepsilon}
\title{}
\title[carleman estimates etc.]{  Carleman estimates for a class of variable coefficient degenerate elliptic operators  with applications to   unique continuation}
\author{Agnid Banerjee}
\address{Tata Institute of Fundamental Research\\
Centre For Applicable Mathematics \\ Bangalore-560065, India}\email[Agnid Banerjee]{agnidban@gmail.com}

\author{Ramesh Manna}
\address{Department of Mathematics, Indian Institute of Science, 560 012 Bangalore, India}
\email{rameshmanna@iisc.ac.in}

\thanks{A.B is supported in part by SERB Matrix grant MTR/2018/000267 and by Department of Atomic Energy,  Government of India, under
project no.  12-R \& D-TFR-5.01-0520.}
\thanks{R.M is  supported by  C.V. Raman PDF, R(IA)CVR-PDF/2020/224}


%
%
%
\keywords{}
\subjclass{}

\maketitle

\tableofcontents

\begin{abstract}
In this paper, we obtain   new Carleman estimates for a class of variable coefficient degenerate elliptic operators whose constant coefficient model at one point is the so called Baouendi-Grushin operator.  This generalizes the  results  obtained by the two of  us with Garofalo in \cite{BGM} where similar estimates were established for the "constant coefficient"   Baouendi-Grushin operator. Consequently,  we obtain: (i)  a Bourgain-Kenig  type quantitative uniqueness result in the variable coefficient setting; (ii) and a strong unique continuation property for a class of degenerate sublinear equations. We also derive a subelliptic version   of a  scaling critical Carleman estimate proven by   Regbaoui  in the Euclidean setting   using which we deduce a new unique continuation result in  the case of scaling critical Hardy type potentials.

\end{abstract}

\section{Introduction}\label{S:intro}
In this paper, we study some ad-hoc $L^{2}-L^{2}$ Carleman estimates for operators of the type
\begin{equation}\label{operator}
\La= \sum_{i=1}^N X_i(a_{ij} (z, t) X_j ),
\end{equation}
where $(z,t) \in  \R^m \times \R^k$, $N=m+k$ and the vector fields $X_1, ..., X_N$ are given by
\begin{equation}\label{vector}
X_i= \partial_{z_i}, i=1,..,m \quad X_{m+j}= |z|^{\gamma}\partial_{t_j}, j=1,..,k, \\ \gamma>0.\end{equation}
Besides ellipticity, the $N \times N$ matrix valued function $A(z,t)= [a_{ij}(z,t)]$ is required to satisfy some structural assumptions that will be specified in \eqref{H} in Section \ref{pre} below.  Such conditions reduce to the standard Lipschitz continuity when the dimension $k=0$ or when $\gamma=0$.  One should note that when $A= \mathbb{I}$, the operator in \eqref{operator} reduces to the well known Baouendi-Grushin operator given by 

\begin{equation}\label{ba}
\Ba = \Delta_z  + |z|^{2\gamma} \Delta_t.
\end{equation}
The operator $\Ba$ is degenerate  elliptic along $\{z=0\}$ and it is not translation invariant in $\R^N$. We recall that a more general class of operators modelled on $\Ba$ was first introduced by Baouendi who studied the Dirichlet problem in some appropriate weighted Sobolev space in \cite{Ba}.  Subsequently  in \cite{Gr1}, \cite{Gr2}, Grushin analyzed the hypoellipticity of this operator when $\gamma \in \mathbb{N}$. We also refer to \cite{FL0, FL, FL3, W} for other interesting works related to $\Ba$.  Remarkably, the operator $\Ba$ also plays an important role in the work \cite{KPS} on the higher regularity of the free boundary in the classical Signorini problem.   

To provide the reader with some perspective we mention that when $\gamma = 1$ the operator $\Ba$ is intimately connected to the sub-Laplacians in groups of Heisenberg type. In such Lie groups, in fact, in the exponential coordinates with respect to a fixed orthonormal basis of the Lie algebra, the sub-Laplacian  is given by
\begin{equation}\label{slH}
\Delta_H = \Delta_z + \frac{|z|^2}{4} \Delta_t  + \sum_{\ell = 1}^k \p_{t_\ell} \sum_{i<j} b^\ell_{ij} (z_i \p_{z_j} -  z_j \p_{z_i}),
\end{equation}
where $b^\ell_{ij}$ indicate the group constants. If $u$ is a solution of $\Delta_H$ that further annihilates the symplectic vector field $\sum_{\ell = 1}^k \p_{t_\ell} \sum_{i<j} b^\ell_{ij} (z_i \p_{z_j} -  z_j \p_{z_i})$, then, up to a normalisation factor of $4$, $u$ solves  the operator $\Ba$ obtained by letting $\gamma = 1$ in \eqref{ba} above.

Concerning the question of interest for this paper, the unique continuation property, we mention that for general uniformly elliptic equations there are essentially two known methods  for proving it. The former  is  based on Carleman inequalities, which are appropriate weighted versions of  Sobolev-Poincar\'e inequalities. This method  was first introduced by T. Carleman in his fundamental work \cite{C} in which  he  showed that  strong unique continuation holds  for equations of the type 
\[
-\Delta u +V u = 0, \  \ \ \ \ \  V \in L^{\infty}_{loc}(\R^2).
\]
In his pioneering work \cite{A}, Aronszajn extended such estimates to higher dimensions and uniformly elliptic operators with $C^{2}$ principal part. Subsequently, in \cite{AKS} the authors generalised this result to uniformly elliptic equations with Lipschitz coefficients in the principal part, see also \cite{Ho}. We stress that unique continuation fails in general when the coefficients of the principal part are only H\"older continuous, see Plis' counterexample in \cite{Pl}, and also \cite{Mi}. 
The second approach came up in the works of Garofalo and  Lin, see \cite{GL1}, \cite{GL2}. Their method is based on the almost monotonicity of a generalisation of the frequency function, first introduced by Almgren in \cite{Al} for harmonic functions. Using this approach, they were able to obtain new quantitative  information on the zero set of solutions to divergence form elliptic equations with Lipschitz coefficients.

The unique continuation property for the degenerate operators $\Ba$ is much subtler than the one for the Laplacian. It was first established by Garofalo in  \cite{G}. In that work he introduced a Almgren type  frequency function associated with $\Ba$, and proved that such function is monotone non-decreasing on solutions of $\Ba= 0$. These results were extended to more general variable coefficient equations by Garofalo and  Vassilev  in \cite{GV}. One should also see the related works \cite{GLan} and \cite{GR} on the Heisenberg and more general Carnot groups. We also note that a version of the monotonicity formula for $\Ba$ played an extensive role in the recent work \cite{CSS} on the obstacle problem for the fractional Laplacian. 

Using some ad hoc Carleman estimates in \cite{GarShen} the authors were able to establish for the first time some strong unique continuation results for $\Ba u+Vu=0$ in the difficult situation when $V$ satisfies appropriate $L^p$ integrability hypothesis. Their analysis, which is closer in spirit to the works \cite{JK}, \cite{J}, \cite{ChS}, \cite{KT} to name a few,  only covers the special case when $\gamma=k=1$ in \eqref{ba}, and ultimately rests on delicate boundedness properties of certain projector operators generalising some of the results in \cite{So}.  We also refer to the recent work of one of us with Mallick in  \cite{BM} where, using such projector operator estimates, a new $L^{2}-L^{2}$ Carleman estimate is derived. Using the latter, the authors deduce strong unique continuation when the potential $V$  satisfies Hardy type growth assumptions. It is worth mentioning at this point that the general situation of the results in \cite{GarShen} presently remains a challenging open question.

$L^{2}-L^2$ Carleman estimates  with singular weights  for the general  Baouendi-Grushin operators  $\Ba$ which are analogous to the ones in  \cite{A} have been established  very recently by two of us with Garofalo in \cite{BGM}  by using elementary arguments based on integration by parts and by an appropriate  application of Rellich type identity.  In the same paper,    quantitative uniqueness result of Bourgain-Kenig type (see  \cite{BK}) and  a strong unique   continuation for a class of  sublinear equations  of the type \eqref{sub} (when $A\equiv \mathbb{I}$) are also  proven. 

In the present work, we generalize the results in \cite{BGM} to  variable coefficient principal part where the matrix  valued function $A$  is assumed to be Lipschitz continuous with respect to a suitable pseudo-distance associated to the system of vector fields $\{X_i\}$.  We refer to \eqref{H} below for the precise assumptions. This framework was first introduced by Garofalo and Vassilev in the above cited paper \cite{GV}.  It is easily seen that  in the situation when $k=0$  the hypothesis \eqref{H} below  coincides with the usual Lipschitz continuity at the origin of the coefficients $a_{ij}$.  Our Carleman estimates thus encompass those in the cited paper \cite{AKS}. Our main results Theorem \ref{main} and Theorem \ref{main2}  can be seen as the variable coefficient analogues of the corresponding results in \cite{BGM}.  The key to the proof of such results are the Carleman estimates in \eqref{est1} and \eqref{f10} below that we derive. As the reader will see, the proof of these estimates are more involved than that for $\Ba$ because of the additional error terms that are incurred due to the Lipschitz perturbation of the principal part. Such error terms are eventually handled by a delicate interpolation type argument  in the proof of the respective estimates.  As an application of our techniques, we also show how to obtain a further refined estimate for  zero-order $C^{1}$  perturbations of the operator  as in \eqref{df} below which in particular implies a quantitative uniqueness result of Donnelly-Fefferman type (see Theorem \ref{DF1}). We mention that the result in Theorem \ref{DF1} has however   been previously obtained by one of us with Garofalo in \cite{BG1} by an adaptation of the Almgren's frequency function approach. Therefore this part of our work can be thought of as an alternate approach to  the  Donnelly-Fefferman type quantitative uniqueness in this degenerate setting.  As a further extension of our techniques, we also  establish a subelliptic version of a  critical Carleman estimate proven by Regbaoui in \cite{Reg}  for uniformly elliptic operators which in turn implies a certain unique continuation result for equations of the type \eqref{hj} where the potential $V$ satisfies the Hardy type growth assumption  as in \eqref{V1}  (see Theorem \ref{har1}).  We mention that proof of the corresponding estimate in \cite{Reg} uses in a crucial way the polar   decomposition of the  frozen constant coefficient operator. Our  proof of \eqref{har1}  is quite  different from that in \cite{Reg} and is instead based on a suitable  adaptation of a  Rellich type  identity  as stated in \eqref{re} below.  Therefore in that sense, the proof of all the Carleman estimates in this paper have a universal character. 
  Over here, we would like to    mention that   Theorem \ref{har1}  is however  slightly  weaker than the   strong unique continuation property because the hypothesis of the theorem  involves a somewhat different notion of vanishing ( see \eqref{vp1} below). Nevertheless  it   provides an improvement of Theorem 4.4 in \cite{G}. We refer to Section \ref{pre} for further discussions on this topic.  Finally we would like to point out that a somewhat technical level, our work  also differs  additionally from \cite{GV} and \cite{BG1} (which uses the frequency function approach in this variable coefficient setting)  in the sense that   for the proof of the Carleman estimates, a third derivative estimate of the gauge function $\rho$  as  in Lemma \ref{lma3.3} below is crucial for our analysis.  We provide a proof of such an estimate in the Appendix because it involves a long and delicate computation.
  
  The paper is organized as follows. In Section \ref{pre}, we introduce  relevant notions, gather some known results and  then  state our main results. In Section \ref{mn}, we   prove our main results. In the Appendix, we give a proof of  Lemma \ref{lma3.3}.
  
  \textbf{Acknowledgment:} The authors would like to thank Nicola Garofalo for sharing ideas and  discussions at various stages of the work. 
  
  \section{Notations and preliminary results}\label{pre}
Henceforth in this paper we follow the notations adopted in \cite{G} and \cite{GV}, with one notable proviso: the parameter $\gamma>0$ in \eqref{ba}, etc. in this paper plays the role of $\alpha >0$ in \cite{G} and \cite{GV}. The reason for this is that we have reserved the greek letter $\alpha$ for the powers of the singular  weights in our Carleman estimates.  Throughout the paper, whenever convenient, we will use the summation convention over repeated indices. 
Given a function $f$, we respectively denote
\begin{equation*}
Xf= (X_1f,...,X_Nf),\ \ \ \ \ \ \ \ \ |Xf|^2= <Xf,Xf> = \sum_{i=1}^N (X_i f)^2,
\end{equation*}

the intrinsic (degenerate) gradient of a function $f$, and the square of its length where the vector fields $\{X_i\}$ are defined as in \eqref{vector}. We note that the vector fields $X_i$ are homogeneous of degree one with respect to the following family of anisotropic dilations 
\begin{equation}\label{dil}
\delta_\la(z,t)=(\la z,\la^{\gamma+1} t),\ \ \ \ \ \ \ \ \la>0.
\end{equation}
Consequently,  the Baouendi-Grushin operator $\Ba$  as defined  in \eqref{ba} is homogeneous of degree two with respect to \eqref{dil}. Let $dzdt$ denote the Lebesgue measure in $\R^N$. Since $d(\delta_\la(z,t)) = \la^Q dz dt$, where
\begin{equation}
Q= m + (\gamma+1) k,
\end{equation}
such number plays the role of a dimension in the analysis of the operator
$\Ba$ as well as $\La$ as in \eqref{operator}. 
For instance, one has the following remarkable fact (see \cite{G}) that the fundamental solution $\Gamma$ of $\Ba$  with pole at the origin is given by the formula
\[
\Gamma(z,t) = \frac{C}{\rho(z,t)^{Q-2}},\ \ \ \ \ \ \ \ \ (z,t)\not= (0,0),
\]
where $C>0$ is suitably chosen and $\rho$ is the pseudo-gauge 
\begin{equation}\label{rho}
\rho(z,t)=(|z|^{2(\gamma+1)} + (\gamma+1)^2 |t|^2)^{\frac{1}{2(\gamma+1)}}.
\end{equation}
A  function $v$ is $\delta_{\la}$-homogeneous of degree $\kappa$ if and only if $Zv=\kappa v$. Since $\rho$ in \eqref{rho} is homogeneous of degree one, we have
\begin{equation}\label{hg}
Z\rho=\rho.
\end{equation}
We respectively denote by 
\[
B_r = \{(z,t)\in \R^N\mid \rho(z,t) < r\},\ \ \ \ \ \ \ \ S_r = \{(z,t)\in \R^N\mid \rho(z,t) = r\}, 
\]
the gauge pseudo-ball and sphere centered at $0$ with radius $r$. The infinitesimal generator of the family of dilations \eqref{dil}  is given by the vector field
\begin{equation}\label{Z}
Z= \sum_{i=1}^m z_i \partial_{z_i} + (\gamma+1)\sum_{j=1}^k t_j \partial_{t_j}.
\end{equation}
We note the important facts that
\begin{equation}\label{divZ}
\operatorname{div} Z = Q,\ \ \ \ \ \ \ \ \ \ \ \ [X_i,Z] = X_i,\ \ \ i=1,...,N.
\end{equation}
We also need the angle function $\psi$ introduced in \cite{G}
\begin{equation}\label{psi}
\psi = |X\rho|^2= \frac{|z|^{2\gamma}}{\rho^{2\gamma}}.
\end{equation}

The function $\psi$ vanishes on the characteristic manifold $M=\Rn \times \{0\}$ and clearly satisfies $0\leq \psi \leq 1$. Since $\psi$ is homogeneous of degree zero with respect to \eqref{dil}, one has
\begin{equation}\label{Zpsi}
 Z\psi = 0.
 \end{equation}
 If $f\in C^2(\R)$ and $v\in C^2(\R^N)$, then we have the important identities (see \cite{G})
 \begin{equation}\label{ii}
 \Ba f(\rho) = \psi \left(f''(\rho) + \frac{Q-1}{\rho} f'(\rho)\right),
 \end{equation}
 and 
 \begin{equation}\label{h10}
<Xv,X\rho> = \sum_{i=1}^N X_i v X_i \rho = \frac{\psi}{\rho}  Zv.
\end{equation}
Henceforth, for any two vector fields $U$ and $W$, $[U,W] = UW - WU$ denotes their commutator. 

A first basic assumption on the matrix-valued function $A=[a_{ij}]$ is that it be symmetric and  uniformly elliptic. i.e., $a_{ij} = a_{ji}$, $i,j=1,...,N$, and there exists $\lambda > 0$ such that for every $(z,t)\in \R^N$ and $\eta\in \R^N$ one has
\begin{equation}\label{ue}
\lambda|\eta|^2 \leq<A(z,t)\eta, \eta> \leq \lambda^{-1} |\eta|^2.
\end{equation}

Throughout the paper we assume that 
\begin{equation}\label{A0}
A(0,0) = I_N,
\end{equation}
where $I_N$ indicates the identity matrix in $\R^N$.
In order to state our main assumptions (H) on the matrix $A$ it
will be useful to represent  the latter in the following block
form
\begin{equation*}
A= \begin{pmatrix} A_{11} & A_{12} \\ A_{21} & A_{22}
\end{pmatrix},
\end{equation*}
Here, the entries are respectively $ m\times
m,\thickspace m\times k,\thickspace k\times m$ and $k\times k $
matrices, and we assume that $A^t_{12}=A_{21}$.  We shall denote
by $B$ the matrix
\[
B = A - I_N,
\]
and thus
\begin{equation}\label{Bat0}
B(0,0) = O_N,
\end{equation}
thanks to \eqref{A0}.  We now state the structural assumptions on the matrix $A$.
\begin{hyp}
	There exists a positive constant $\Lambda$ such that one has in $B_{1}$ the
	following estimates
	\begin{equation*}
	|b_{ij}| = |a_{ij} - \delta_{ij}|\ \leq\
	\begin{cases}
	\Lambda\rho, \hskip1.4truein \text{ for }\ 1\leq i,\, j\leq m,\\
	\\
	\Lambda \psi^{\frac12+\frac1{2\gamma}}\rho\, =\, \Lambda\frac{|z|^{\gamma+1}}{\rho^\gamma},\quad
	\text{otherwise},
	\end{cases}
	\end{equation*}
	\begin{equation}\label{H}
	\tag{H}
	\end{equation}
	\begin{equation*}
	|X_kb_{ij}| = |X_ka_{ij}|\ \leq\
	\begin{cases}
	\Lambda, \hskip1.1truein \text{ for }\quad  1\leq k\leq m ,\ \text{ and }\ 1\leq i,\,j\leq m
	\\
	\Lambda \psi^{1+\frac{1}{2\gamma}}\quad \text{when $k >m$ and $\max\{i, j\} > m$}
	\\
	\Lambda \psi^{1/2}\quad
	\text{otherwise}.
	\end{cases}
	\end{equation*}
\end{hyp}	

\

An interesting typical example of a matrix $A$ satisfying the conditions \eqref{H} is

\begin{equation*}
A= \begin{pmatrix} 1 + \rho f(z,t) & |z|^{\gamma+1} g(z,t) \\ |z|^{\gamma+1} g(z,t) & 1+ |z|^{\gamma+1} h(z,t)
\end{pmatrix},
\end{equation*}
where $f,g$ and $h$ are Lipschitz continuous near the origin in $\R^2$. In this example $m=k=1$.

We next collect several preliminary results that will be important in our proof. We first consider the quantity
\begin{eqnarray} \label{defmu}
\mu=\langle A X \rho, X\rho\rangle.
\end{eqnarray}
 In view of the uniform ellipticity of $A$, we have
 
 \begin{eqnarray} \label{musi}
 \lambda \psi \leq \mu \leq \lambda^{-1} \, \psi.
 \end{eqnarray}
The following vector field $F$ will play an important role in the paper:
\begin{eqnarray} \label{defF}
F=\frac{\rho}{\mu} \sum_{i,j=1}^Na_{ij} X_i \rho X_j.
\end{eqnarray}
We note that 
\begin{eqnarray} \label{3.5}
F\rho= \rho.
\end{eqnarray} 

 \begin{dfn}
 	We have $$B=A-Id, ~~\sigma = \langle B X\rho, X\rho\rangle.$$
 	One more notation we will use is: $(b_{ij})=B.$	
 \end{dfn}

In the next theorem we collect several important estimates that have been established in \cite{G} and \cite{GV} which will be useful throughout the work. 

\begin{thrm}\label{Est1}
	There exists a constant $C(\beta,\lambda,\Lambda,N)>0$ such that for any function $u$ one has:
	\begin{itemize}
		\item[(i)] $|Q- \operatorname{div} F| \leq C\rho$;
		\item[(ii)] $|F\mu|, |F\psi| \leq C \rho \psi$;
		\item[(iii)] $\operatorname{div} (\frac{\sigma Z}{\mu}) \leq C \rho$;
		\item[(iv)] $|X_{i}\rho|\leq \psi^{1+\frac{1}{2\gamma}}, \ \ i=1,...,m,\ \ \ \ |X_{m+j} \rho| \leq (\gamma+1)\psi^{1/2},\ \  j=1,...,k$;
		\item[(v)] $ |F-Z| \leq C \rho^2$;
		\item[(vi)] $|<FAXu, Xu>| \leq C \rho |Xu|^2$;
		\item[(vii)] $|[X_i,F]u -X_iu| \leq C \rho |Xu|$,\ \ \ \ $i=1,...,N$;
		\item[(viii)] $|\sigma| \leq C \rho \psi^{3/2+ \frac{1}{2\gamma}}\ |X\sigma| \leq C\psi^{3/2}$;
		\item[(ix)] $|\frac{b_{ij} X_j\rho X_i}{\mu}| \leq C|z|$;
		\item[(x)] $|X_i\psi|\leq \frac{C\gamma\psi}{|z|}, i=1,...,m,\ \ \ \ |X_{n+j} \psi| \leq \frac{C\gamma\psi}{\rho}, j=1, ..., k$;
		\item[(xi)] $|\frac{\sigma}{\mu}| \leq C \rho \psi,\  |Z\sigma| \leq C \rho \psi,\ |X_k\sigma| \leq C \psi^{3/2}$;
		\item[(xii)] $|[X_i, -\frac{\sigma Z}{\mu}]u| \leq C \rho |Xu|$,\ \ \ \  \ (Lemma 2.7 in \cite{GV});
		\item[(xiii)] $|[X_\ell, \frac{\rho}{\mu} \sum_{i,j=1}^N \frac{b_{ij} X_j\rho} X_i]u| \leq C \rho |Xu|$,\ \ $\ell = 1,...,N$.
	\end{itemize}
\end{thrm}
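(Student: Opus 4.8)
The plan is to prove each of the thirteen estimates (i)--(xiii) by direct computation, systematically exploiting the homogeneity properties of $\rho$ and $\psi$, the structural hypothesis \eqref{H} on the matrix $B = A - I_N$, and the derivative bounds on $\rho$ collected in (iv). The guiding principle throughout is that $F$ is a Lipschitz perturbation of the dilation field $Z$: writing $F = Z + (F - Z)$ and tracking the size of the error $F - Z$ is what makes most of the identities for $\Ba$ (established in \cite{G}) survive the variable-coefficient perturbation.

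First I would handle (iv) and (x), which are purely about the gauge $\rho$ and the angle function $\psi$ and do not involve $A$ at all; these follow by differentiating the explicit formulas \eqref{rho} and \eqref{psi}, using $X_i = \partial_{z_i}$ for $i \le m$ and $X_{m+j} = |z|^\gamma \partial_{t_j}$, and are recorded already in \cite{G}. Next, (viii), (ix), (xi) are the basic size estimates for $\sigma = \langle BX\rho, X\rho\rangle$ and the quantities built from it: one expands $\sigma = \sum b_{ij} X_i\rho\, X_j\rho$ and inserts the two cases of the first bound in \eqref{H} together with (iv), being careful to pair the ``diagonal block'' bound $|b_{ij}| \le \Lambda\rho$ with the $z$-block of $X\rho$ and the ``off-block'' bound $\Lambda\psi^{\frac12 + \frac1{2\gamma}}\rho$ with the $t$-block; differentiating $\sigma$ and using the second group of bounds in \eqref{H} gives the $X_k\sigma$, $X\sigma$, $Z\sigma$ estimates. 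From these, using \eqref{musi} to control $1/\mu$ from below, items (iii) and (xii)--(xiii) follow: (iii) is a divergence computation combining $\operatorname{div} Z = Q$ from \eqref{divZ} with the size of $\sigma/\mu$ and its derivatives, and the commutator estimates (xii), (xiii) expand $[X_\ell, \cdot]$ by the product rule, each resulting term carrying either a factor $\sigma$, $b_{ij}$, or one of their $X$-derivatives, all of which are $O(\rho)$ after the $1/\mu$ is absorbed.

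For the items directly about $F$ --- (i), (ii), (v), (vi), (vii) --- I would use \eqref{defF}, \eqref{3.5}, and the decomposition $a_{ij} = \delta_{ij} + b_{ij}$, $\mu = \psi + \sigma$. Item (v), $|F - Z| \le C\rho^2$, is the key structural fact: one writes $F - Z = \frac{\rho}{\mu}\sum a_{ij} X_i\rho\, X_j - Z$, uses \eqref{h10} which says $\sum_i X_i v\, X_i\rho = \frac{\psi}{\rho} Zv$ to identify the ``$I_N$-part'' as $\frac{\rho}{\mu}\cdot\frac{\psi}{\rho}Z = \frac{\psi}{\mu}Z$, so that $F - Z = \frac{\psi - \mu}{\mu}Z + \frac{\rho}{\mu}\sum b_{ij}X_i\rho\, X_j = -\frac{\sigma}{\mu}Z + \frac{\rho}{\mu}\sum b_{ij}X_i\rho\, X_j$; then (xi) and (ix) bound the two pieces, and comparing with \eqref{F-Z is $O(\rho^2)$} uses that the relevant powers of $\psi$ exactly compensate. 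Once (v) is in hand, (i) follows by taking divergences and invoking (iii) plus $\operatorname{div} Z = Q$; (ii) follows from $F\mu = Z\mu + (F-Z)\mu$ with $Z\psi = 0$ from \eqref{Zpsi} and $Z\sigma$ from (xi); (vi) expands $\langle FAXu, Xu\rangle$ and bounds $F a_{ij} = F\delta_{ij} + F b_{ij} = O(\rho)$ (using $F b_{ij} = Z b_{ij} + (F-Z)b_{ij}$ and \eqref{H}); and (vii) expands $[X_i, F]u = X_i(Fu) - F(X_i u)$, uses $[X_i, Z] = X_i$ from \eqref{divZ} to extract the principal term $X_i u$, and controls the remainder $[X_i, F-Z]u$ by the product rule together with (v) and the derivative bounds.

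The main obstacle I anticipate is the bookkeeping in (v) and in the commutator estimates (vii), (xii), (xiii): one must verify that in every term the powers of $\psi$ produced by the \eqref{H}-bounds on $b_{ij}$ and $X_k b_{ij}$, combined with the powers of $\psi$ in $X_i\rho$ from (iv) and in $1/\mu \sim 1/\psi$ from \eqref{musi}, conspire to leave a net nonnegative power of $\psi$ (so that $0 \le \psi \le 1$ can be used to discard it) and the correct power of $\rho$. The anisotropy between the $z$-directions ($1 \le i \le m$) and the $t$-directions ($i > m$) means the two cases in each \eqref{H}-bound must be matched with the correct blocks throughout, and a single mismatched exponent would break the estimate; this is precisely the delicate point, but it is a finite computation with no conceptual difficulty, and it is carried out in \cite{G} and \cite{GV} for these and closely related quantities.
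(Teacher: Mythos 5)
The paper does not actually prove Theorem \ref{Est1}: it is prefaced by ``we collect several important estimates that have been established in \cite{G} and \cite{GV},'' and no argument is given. Your outline is therefore not a competitor to a proof in the paper but a reconstruction of the arguments from those references, and in that capacity it is essentially correct. In particular you have correctly identified the structural identity that makes everything work, namely that
$$F - Z = -\frac{\sigma}{\mu}\,Z + \frac{\rho}{\mu}\sum_{i,j} b_{ij}X_i\rho\, X_j,$$
which follows from $\sum_i X_i\rho\, X_i = \frac{\psi}{\rho}Z$ (this is \eqref{h10}) together with $\mu = \psi + \sigma$; this is exactly the mechanism \cite{GV} uses to reduce estimates for $F$ to estimates for $Z$ plus error terms controlled by \eqref{H}.

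One place where your chain of implications is slightly loose: you propose to obtain (i) ``by taking divergences and invoking (iii) plus $\operatorname{div}Z = Q$.'' Note that (v) bounds $|F-Z|$, not $|\operatorname{div}(F-Z)|$, and taking a divergence does not in general preserve an $O(\rho^2)$ bound. What rescues the argument is precisely the pointwise decomposition above: $\operatorname{div}(F-Z)$ splits into $\operatorname{div}\!\big({-}\tfrac{\sigma}{\mu}Z\big)$, which is controlled by (iii), plus $\operatorname{div}\!\big(\tfrac{\rho}{\mu}\sum b_{ij}X_i\rho\, X_j\big)$, which is a \emph{separate} divergence computation of the same kind (using \eqref{H}, the second-derivative bounds on $\rho$ from Proposition \ref{prop3.4}, and Remark \ref{rmk3.7}) and is not literally implied by (iii) or (v). You would need to carry out the product-rule expansion on this second piece as well. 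A similar remark applies to (vii): it does indeed reduce to $[X_i,Z]=X_i$ together with estimates of type (xii) and (xiii), but those two commutator estimates then have to be established by their own direct computations, not merely cited. These are finite bookkeeping tasks and your proposal correctly anticipates that the delicate step is matching the $\psi$-exponents coming from \eqref{H}, Proposition \ref{gauge}, and $\mu \sim \psi$ across the two blocks of indices; this is exactly what \cite{GV} does.
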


We also need the following lemmas.
 \begin{prop}[Proposition 3.1, \cite{GV}] \label{gauge}
 We have
 \begin{eqnarray*}
		X_l\rho=\begin{cases}
			\psi \frac{z_l}{\rho}, & \text{ for } 1\leq l\leq m\\
			(\gamma+1) \, \psi^{1/2} \frac{t_{l-m}}{\rho^{\gamma+1}}, & \text{ for } m+1\leq l\leq N.
		\end{cases}
	\end{eqnarray*} 
 
 Consequently it follows that, 
 \begin{enumerate}
 \item $|X_i\rho| \leq \psi^{1+\frac{1}{2\gamma}}$ for $1\leq i\leq m.$
 
  \item $|X_{n+i}\rho| \leq (\gamma+1)\psi^{\frac{1}{2}}$ for $1\leq i\leq k.$
 
 \end{enumerate}
 \end{prop}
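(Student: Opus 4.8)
The final statement to prove is Proposition~\ref{gauge}, which gives the explicit formula for $X_l\rho$ together with the two pointwise bounds. The plan is to compute directly from the definitions \eqref{rho} of $\rho$ and \eqref{vector} of the vector fields $X_l$, and then to read off the bounds using the definition \eqref{psi} of $\psi$.

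First I would handle the case $1\le l\le m$, where $X_l=\partial_{z_l}$. Differentiating $\rho=(|z|^{2(\gamma+1)}+(\gamma+1)^2|t|^2)^{1/(2(\gamma+1))}$ in $z_l$ gives
\begin{equation*}
\partial_{z_l}\rho=\frac{1}{2(\gamma+1)}\,\rho^{1-2(\gamma+1)}\cdot 2(\gamma+1)|z|^{2\gamma}z_l=\rho^{-(2\gamma+1)}|z|^{2\gamma}z_l=\frac{|z|^{2\gamma}}{\rho^{2\gamma}}\cdot\frac{z_l}{\rho}=\psi\,\frac{z_l}{\rho},
\end{equation*}
using $\psi=|z|^{2\gamma}/\rho^{2\gamma}$. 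For $m+1\le l\le N$, write $l=m+i$, so $X_l=|z|^{\gamma}\partial_{t_i}$. Differentiating $\rho$ in $t_i$ gives $\partial_{t_i}\rho=\frac{1}{2(\gamma+1)}\rho^{1-2(\gamma+1)}\cdot 2(\gamma+1)^2 t_i=(\gamma+1)\rho^{-(2\gamma+1)}t_i$, and multiplying by $|z|^\gamma$ yields
\begin{equation*}
X_{m+i}\rho=(\gamma+1)\frac{|z|^\gamma}{\rho^{\gamma}}\cdot\frac{t_i}{\rho^{\gamma+1}}=(\gamma+1)\,\psi^{1/2}\,\frac{t_i}{\rho^{\gamma+1}},
\end{equation*}
since $|z|^\gamma/\rho^\gamma=\psi^{1/2}$. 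This establishes the displayed formula.

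For the consequences, the first bound follows from the formula for $1\le l\le m$: $|X_l\rho|=\psi\,|z_l|/\rho\le\psi\,|z|/\rho$, and from $\psi^{1/2}=|z|^\gamma/\rho^\gamma$ we get $|z|/\rho=\psi^{1/(2\gamma)}$, so $|X_l\rho|\le\psi\cdot\psi^{1/(2\gamma)}=\psi^{1+\frac{1}{2\gamma}}$. For the second bound, from the formula $|X_{m+i}\rho|=(\gamma+1)\psi^{1/2}|t_i|/\rho^{\gamma+1}\le(\gamma+1)\psi^{1/2}|t|/\rho^{\gamma+1}$, and the definition of $\rho$ forces $(\gamma+1)^2|t|^2\le\rho^{2(\gamma+1)}$, i.e. $(\gamma+1)|t|/\rho^{\gamma+1}\le 1$; hence $|X_{m+i}\rho|\le(\gamma+1)\psi^{1/2}$ after a harmless reinsertion of the constant, or more precisely $|X_{m+i}\rho|\le\psi^{1/2}$ with the sharp constant and $\le(\gamma+1)\psi^{1/2}$ as stated.

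There is essentially no obstacle here: the entire proof is a one-line chain-rule computation for each block of indices, followed by elementary algebraic manipulation of the identities $\psi=|z|^{2\gamma}/\rho^{2\gamma}$ and $(\gamma+1)^2|t|^2\le\rho^{2(\gamma+1)}$. The only point requiring a modicum of care is bookkeeping the exponents of $\rho$ correctly when differentiating the $1/(2(\gamma+1))$ power and tracking how the extra factor $|z|^\gamma$ in the degenerate vector fields $X_{m+i}$ combines with the powers of $|z|/\rho$ to produce the half-integer powers of $\psi$.
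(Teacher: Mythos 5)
Your computation is correct and is the only natural way to prove this: both formulas follow from a one-line chain-rule differentiation of $\rho=(|z|^{2(\gamma+1)}+(\gamma+1)^2|t|^2)^{1/(2(\gamma+1))}$, and the two bounds drop out from $|z|/\rho=\psi^{1/(2\gamma)}$ and $(\gamma+1)|t|\leq\rho^{\gamma+1}$. The paper itself cites this as Proposition 3.1 of \cite{GV} without reproducing the proof, and your argument (including the observation that the second bound actually holds with the sharper constant $1$ in place of $\gamma+1$) is exactly the computation carried out there.
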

 
 \begin{lemma} \label{secondder}
	We have  the expressions for the second derivatives of $\rho$ (See, Proposition 3.3 in \cite{GV}):	
	\begin{enumerate}
		\item  For $1\leq i,j \leq m,$ we have:
		\begin{eqnarray*}&&X_iX_j \rho=-(2\gamma+1) z_iz_j \frac{\psi^2}{\rho^3}+ \left[2\gamma \frac{z_iz_j}{|z|^2}+\delta_{ij}\right] \frac{\psi}{\rho}. 
		\end{eqnarray*}	
	
	\item For $1\leq i\leq m$ and $1\leq j \leq k,$ we have:
	\begin{eqnarray*}
		&&X_iX_{m+j} \rho=-(2\gamma+1)(\gamma+1) \frac{z_it_j}{|z|^{\gamma}} \frac{\psi^2}{\rho^3} +\frac{\psi}{\rho}\left[\gamma(\gamma+1) \frac{z_it_j}{|z|^{\gamma+2}}\right].
	\end{eqnarray*}

\item For $1\leq i\leq m$ and $1\leq j\leq k$ we have:
\begin{eqnarray*}
X_{m+j} X_i\rho=-(2\gamma+1)(\gamma+1) \frac{z_it_j}{|z|^{\gamma}} \, \frac{\psi^2}{\rho^3}.
\end{eqnarray*}

\item For $1\leq i, j\leq k$ we have:
\begin{eqnarray*}
X_{m+i}X_{m+j} \rho=-(2\gamma+1)(\gamma+1)^2 \frac{t_jt_i}{|z|^{2\gamma}} \, \frac{\psi^2}{\rho^3} +(\gamma+1) \delta_{ij} \frac{\psi}{\rho}. 
\end{eqnarray*}
	\end{enumerate}
	
\end{lemma}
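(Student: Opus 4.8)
The final statement to prove is Lemma \ref{secondder}, which gives explicit formulas for the second-order derivatives $X_iX_j\rho$ of the gauge function $\rho$ defined in \eqref{rho}. Since these are just computational identities (and the paper itself attributes them to Proposition 3.3 in \cite{GV}), the proof is a direct calculation starting from the first-order formulas already recorded in Proposition \ref{gauge}.

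\medskip

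\textbf{Plan of proof.} The strategy is to differentiate the closed-form expressions for $X_l\rho$ given in Proposition \ref{gauge}, using the chain rule together with the elementary derivatives of the building blocks $|z|$, $\rho$, and $\psi$. First I would record the auxiliary identities: for $1\le i\le m$, $X_i|z|^2 = 2z_i$, hence $X_i|z| = z_i/|z|$; from \eqref{rho}, $X_i\rho = \psi z_i/\rho$ and $X_{m+j}\rho = (\gamma+1)\psi^{1/2} t_j/\rho^{\gamma+1}$ (this is exactly Proposition \ref{gauge}); and from \eqref{psi}, $\psi = |z|^{2\gamma}\rho^{-2\gamma}$, so that $X_i\psi = 2\gamma|z|^{2\gamma-1}\rho^{-2\gamma}(z_i/|z|) - 2\gamma|z|^{2\gamma}\rho^{-2\gamma-1}X_i\rho = 2\gamma\psi z_i/|z|^2 - 2\gamma\psi^2 z_i/\rho^2$, and $X_{m+j}\psi = -2\gamma|z|^{2\gamma}\rho^{-2\gamma-1}X_{m+j}\rho = -2\gamma(\gamma+1)\psi^{3/2}t_j/\rho^{\gamma+2}$. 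Note also that $X_{m+j}$ annihilates any function of $z$ alone, since $X_{m+j} = |z|^\gamma\partial_{t_j}$.

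\medskip

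\textbf{Carrying out the four cases.} For case (1), apply $X_i$ (with $1\le i\le m$) to $X_j\rho = \psi z_j/\rho$: by the product rule this equals $(X_i\psi)\frac{z_j}{\rho} + \psi\frac{\delta_{ij}}{\rho} - \psi z_j\frac{X_i\rho}{\rho^2}$; substituting the expression for $X_i\psi$ above and $X_i\rho = \psi z_i/\rho$, then collecting the $\psi/\rho$ and $\psi^2/\rho^3$ terms, yields $X_iX_j\rho = \left(2\gamma\frac{z_iz_j}{|z|^2} + \delta_{ij}\right)\frac{\psi}{\rho} - (2\gamma+1)z_iz_j\frac{\psi^2}{\rho^3}$, matching the claim. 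Case (3) is the simplest: $X_{m+j}X_i\rho = X_{m+j}(\psi z_i/\rho) = \frac{z_i}{\rho}X_{m+j}\psi - \psi\frac{z_i}{\rho^2}X_{m+j}\rho$; plugging in $X_{m+j}\psi = -2\gamma(\gamma+1)\psi^{3/2}t_j/\rho^{\gamma+2}$ and $X_{m+j}\rho = (\gamma+1)\psi^{1/2}t_j/\rho^{\gamma+1}$, both terms carry $\psi^2/\rho^3$ and combine to $-(2\gamma+1)(\gamma+1)\frac{z_it_j}{|z|^\gamma}\frac{\psi^2}{\rho^3}$ (using $\psi^{3/2}|z|^{-?}$ bookkeeping via $\psi^{1/2} = |z|^\gamma\rho^{-\gamma}$, so $\psi^{3/2}/\rho^{\gamma+2} = \psi^2\rho^{\gamma}|z|^{-\gamma}/\rho^{\gamma+2}\cdot$—more cleanly, write $t_j/\rho^{\gamma+2}\cdot\psi^{3/2} = t_j|z|^{-\gamma}\psi^2/\rho^{?}$ after converting one $\psi^{1/2}$ factor). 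For case (2), apply $X_i$ ($1\le i\le m$) to $X_{m+j}\rho = (\gamma+1)\psi^{1/2}t_j\rho^{-\gamma-1}$; here $X_i$ acts only on $\psi^{1/2}$ and $\rho^{-\gamma-1}$ (not on $t_j$), giving $(\gamma+1)t_j\left[\frac12\psi^{-1/2}(X_i\psi)\rho^{-\gamma-1} - (\gamma+1)\psi^{1/2}\rho^{-\gamma-2}X_i\rho\right]$, and again substituting and collecting powers produces the stated $\psi/\rho$ and $\psi^2/\rho^3$ terms. For case (4), apply $X_{m+i}$ to $X_{m+j}\rho = (\gamma+1)\psi^{1/2}t_j\rho^{-\gamma-1}$, using $X_{m+i}t_j = |z|^\gamma\delta_{ij}$ together with the formulas for $X_{m+i}\psi$ and $X_{m+i}\rho$; the $\delta_{ij}$ term yields the $(\gamma+1)\delta_{ij}\psi/\rho$ contribution (since $(\gamma+1)|z|^\gamma\rho^{-\gamma-1}\psi^{1/2} = (\gamma+1)\rho^{-1}\psi^{1/2}\cdot\psi^{1/2} = (\gamma+1)\psi/\rho$), and the remaining terms assemble into $-(2\gamma+1)(\gamma+1)^2\frac{t_it_j}{|z|^{2\gamma}}\frac{\psi^2}{\rho^3}$.

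\medskip

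\textbf{Main obstacle.} There is no conceptual obstacle here — every step is a bookkeeping exercise in the chain rule — but the one point that requires care is the repeated conversion between the two natural ways of writing homogeneity-$0$ quantities, namely $\psi^{1/2} = |z|^\gamma/\rho^\gamma$, which must be used to massage factors like $\psi^{3/2}/\rho^{\gamma+2}$ into the canonical form $|z|^{-\gamma}\psi^2/\rho^3$ (and similarly in the mixed and pure-$t$ cases). Keeping consistent track of the exponents of $|z|$, $\rho$, $t_j$, and $\psi$ across the product-rule terms is where arithmetic slips are most likely, so I would organize each case by first expanding mechanically, then reducing all $\psi$-powers to a common exponent before collecting like terms. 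Since the result is already in the literature (\cite{GV}), it suffices to indicate this computation; one may simply refer the reader to Proposition 3.3 of \cite{GV} for the full details.
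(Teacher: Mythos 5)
The paper does not give a proof of Lemma \ref{secondder}: it simply cites Proposition~3.3 of \cite{GV}. So there is no in-paper argument to compare against. Your direct-computation approach — differentiate $X_l\rho$ from Proposition~\ref{gauge} by the product/chain rule, using the closed forms $X_i|z| = z_i/|z|$, $X_i\psi = 2\gamma\psi z_i/|z|^2 - 2\gamma\psi^2 z_i/\rho^2$, $X_{m+j}\psi = -2\gamma(\gamma+1)\psi^{3/2}t_j/\rho^{\gamma+2}$, $X_{m+j}t_i = |z|^\gamma\delta_{ij}$, and then normalizing with $\psi^{1/2} = |z|^\gamma/\rho^\gamma$ — is exactly the natural route, and I verified that carrying it out does reproduce all four claimed identities, including the cancellation $\gamma(\gamma+1)+(\gamma+1)^2 = (\gamma+1)(2\gamma+1)$ in case~(4) and the conversion $\psi^{3/2}/\rho^{\gamma+3} = |z|^{-\gamma}\psi^2/\rho^3$ in cases~(2)--(3). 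The only blemish is cosmetic: in your case~(3) the parenthetical ``more cleanly, write\dots'' trails off mid-sentence; the clean statement you want there is simply $\psi^{3/2}/\rho^{\gamma+3} = \psi^2/(|z|^\gamma\rho^3)$, after which both product-rule terms visibly carry the common factor $(\gamma+1)\,z_i t_j\,\psi^2/(|z|^\gamma\rho^3)$ with coefficients $-2\gamma$ and $-1$ summing to $-(2\gamma+1)$.
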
 
 
 Lemma \ref{secondder} in particular implies the following bounds.

 \begin{prop}[Proposition 3.3, \cite{GV}] \label{prop3.4}
 	\begin{eqnarray*}
 		&&|X_iX_j\rho| \leq C\frac{\mu}{\rho} \text{ for } 1\leq i, j\leq m \text{ or } m+1\leq i, j \leq N,\\
 		&&|X_iX_{m+j}\rho| \leq C\frac{\mu^{\frac{1}{2}}}{|z|}=C \frac{\mu^{\frac{1}{2}-\frac{1}{2\gamma}}}{\rho} \text{ for } 1\leq i\leq m, ~1\leq  j \leq k,\\
 		&&|X_{m+j}X_i\rho| \leq C\frac{\mu^{\frac{3}{2}|z|}}{\rho^2}=C\frac{ \mu^{\frac{3}{2}+\frac{1}{2\gamma}}}{\rho} \text{ for } 1\leq i\leq m, ~1\leq  j \leq k.
 	\end{eqnarray*}
 \end{prop}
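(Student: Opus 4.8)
The plan is to derive all three estimates by substituting the closed-form expressions for the second derivatives of $\rho$ from Lemma \ref{secondder}, bounding each resulting monomial by elementary inequalities, and then converting powers of the angle function $\psi$ into powers of $\mu$ by means of \eqref{musi}. The only elementary inputs needed, all immediate from \eqref{rho} and \eqref{psi}, are: $|z|\le \rho$; $(\gamma+1)|t|\le \rho^{\gamma+1}$; $0\le \psi\le 1$; the identity $\psi=|z|^{2\gamma}\rho^{-2\gamma}$ (equivalently $|z|=\psi^{1/(2\gamma)}\rho$); and componentwise $|z_i|\le |z|$, $|t_j|\le |t|$. Finally, since $\lambda\psi\le\mu\le\lambda^{-1}\psi$ by \eqref{musi}, for every fixed real exponent $a$ one has $\psi^{a}\le C(\lambda,a)\,\mu^{a}$, so at the end every bound in terms of $\psi$ may be freely rewritten in terms of $\mu$.

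For the diagonal blocks one uses parts (1) and (4) of Lemma \ref{secondder}. When $1\le i,j\le m$, in the first term $|z_iz_j|\le |z|^2\le \rho^2$ and $\psi^2\le\psi$, giving $\le(2\gamma+1)\psi/\rho$, and in the second term $|z_iz_j|/|z|^2\le 1$, again giving $\le(2\gamma+1)\psi/\rho$. When $m+1\le i,j\le N$, the term $(\gamma+1)\delta_{ij}\psi/\rho$ is already of the right form, while in the remaining term one writes $|z|^{2\gamma}=\psi\rho^{2\gamma}$, so that $\psi^{2}/|z|^{2\gamma}=\psi/\rho^{2\gamma}$, and then uses $|t_it_j|\le|t|^2\le(\gamma+1)^{-2}\rho^{2\gamma+2}$ to obtain a bound $\le C\rho^{2}\psi/\rho^{3}=C\psi/\rho$. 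Thus $|X_iX_j\rho|\le C\psi/\rho\le C\mu/\rho$ in both cases.

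For the mixed derivatives one uses parts (2) and (3) of Lemma \ref{secondder}, and the one point that requires care is to replace $\psi^{2}=|z|^{4\gamma}\rho^{-4\gamma}$ (respectively $\psi=|z|^{2\gamma}\rho^{-2\gamma}$) \emph{before} estimating, so that the positive powers of $|z|$ are tracked exactly rather than discarded. For $X_iX_{m+j}\rho$: the first term is $\le\frac{|z|^{1-\gamma}\cdot(\gamma+1)^{-1}\rho^{\gamma+1}\cdot|z|^{4\gamma}}{\rho^{4\gamma+3}}=(\gamma+1)^{-1}\frac{|z|^{3\gamma+1}}{\rho^{3\gamma+2}}=(\gamma+1)^{-1}\frac{|z|^{\gamma-1}}{\rho^{\gamma}}\,\psi^{(\gamma+1)/\gamma}\le(\gamma+1)^{-1}\frac{|z|^{\gamma-1}}{\rho^{\gamma}}$, and the second term is $\le\frac{|z|^{2\gamma}\rho^{-2\gamma}}{\rho}\cdot\frac{|z|\,(\gamma+1)^{-1}\rho^{\gamma+1}}{|z|^{\gamma+2}}=(\gamma+1)^{-1}\frac{|z|^{\gamma-1}}{\rho^{\gamma}}$; hence $|X_iX_{m+j}\rho|\le C\frac{|z|^{\gamma-1}}{\rho^{\gamma}}=C\frac{\psi^{1/2}}{|z|}=C\frac{\psi^{1/2-1/(2\gamma)}}{\rho}$, which \eqref{musi} turns into $C\mu^{1/2}/|z|=C\mu^{1/2-1/(2\gamma)}/\rho$. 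Since the single term in part (3) is exactly the first term just estimated, the same computation gives $|X_{m+j}X_i\rho|\le C\frac{|z|^{3\gamma+1}}{\rho^{3\gamma+2}}=C\frac{\psi^{3/2}|z|}{\rho^{2}}\le C\frac{\mu^{3/2}|z|}{\rho^{2}}=C\frac{\mu^{3/2+1/(2\gamma)}}{\rho}$.

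I do not expect a genuine obstacle here: the only subtlety, and hence the "hard part" in a purely bookkeeping sense, is the mixed-derivative case, where one must keep the compensating factor $|z|^{\gamma-1}$ explicit — for $\gamma>1$ the quantity $|z|^{1-\gamma}$ blows up as $z\to 0$, so a crude bound $|z|^{1-\gamma}\le\rho^{1-\gamma}$ would lose the very power of $|z|$ that appears on the right-hand side of the claimed inequality. Everything else reduces to the scalar inequalities $|z|\le\rho$, $(\gamma+1)|t|\le\rho^{\gamma+1}$, $\psi\le 1$, and the trivial comparison $\psi^{a}\le C(\lambda,a)\,\mu^{a}$ coming from \eqref{musi}.
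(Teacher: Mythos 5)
Your proof is correct, and it is exactly the argument one would extract from Proposition 3.3 of [GV]: plug the closed forms of the second derivatives from Lemma \ref{secondder} in, bound $|z_i|$, $|t_j|$ by $|z|$, $(\gamma+1)^{-1}\rho^{\gamma+1}$ respectively, track the powers of $|z|$ exactly in the mixed cases, and convert $\psi\sim\mu$ at the end via \eqref{musi}. The paper itself does not reprove this proposition (it is cited directly from [GV]), so there is nothing to diverge from; your derivation is the natural one.

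Two minor remarks. First, your flagging of the $|z|^{\gamma-1}$ bookkeeping is the right caution: for $\gamma>1$ the crude substitution $|z|^{1-\gamma}\le\rho^{1-\gamma}$ is simply false (the exponent is negative), so one must carry the $|z|$ power through, which you do. Second, the equalities such as $C\mu^{1/2}/|z|=C\mu^{1/2-1/(2\gamma)}/\rho$ in the statement hold literally only with $\psi$ in place of $\mu$; with $\mu$ they hold up to the ellipticity constants, which is the intended reading and which your proof handles by establishing the bound in $\psi$ first and then comparing. (There is also a small typo in the paper's display, $\mu^{\frac{3}{2}|z|}$ for $\mu^{3/2}|z|$, which you silently corrected.)
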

 \begin{rmrk} \label{rmk3.7}
 	It is easily seen that $$|X_lb_{ij} X_i \rho| \leq C \psi.$$ 
	We also have (See \cite{GV}, page 653)
	\begin{align}
 \sum_{i,j=1}^N|X_i b_{ij} \, X_j \rho| \leq  C\mu, \notag~~\text{ and }\sum_{i,j=1}^N| b_{ij} \, X_i \, X_j \rho| \leq  C\mu.\notag
  \end{align}
 \end{rmrk}
%
 
 \begin{lemma}[Lemma 3.9, \cite{GV}] \label{lma3.10}
 	If \eqref{H} holds then:
 	\begin{eqnarray*}
 		|b_{kj}X_j \rho| \leq C  \, \rho \,  \mu^{1+\frac{1}{2\gamma}}.
 	\end{eqnarray*}
 \end{lemma}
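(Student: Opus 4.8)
The plan is to estimate the sum $\sum_{j=1}^N b_{kj}\,X_j\rho$ term by term and, since there are only finitely many terms, to show that each one is bounded by $C\rho\,\psi^{1+\frac{1}{2\gamma}}$; the conclusion then follows from \eqref{musi}, which gives $\psi^{1+\frac{1}{2\gamma}}\leq \lambda^{-(1+\frac{1}{2\gamma})}\mu^{1+\frac{1}{2\gamma}}$. So everything reduces to combining the pointwise size of $b_{kj}$ coming from \eqref{H} with the pointwise size of $X_j\rho$ coming from Proposition \ref{gauge} (equivalently Theorem \ref{Est1}(iv)), namely $|X_j\rho|\leq \psi^{1+\frac{1}{2\gamma}}$ for $1\leq j\leq m$ and $|X_{m+j}\rho|\leq (\gamma+1)\psi^{1/2}$ for $1\leq j\leq k$.

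I would then carry out the bookkeeping in the obvious cases. If $1\leq k\leq m$: for $1\leq j\leq m$ one has $|b_{kj}|\leq \Lambda\rho$, so $|b_{kj}X_j\rho|\leq \Lambda\rho\,\psi^{1+\frac{1}{2\gamma}}$, which is exactly the target exponent; for $j>m$ one is in the ``otherwise'' branch of \eqref{H}, so $|b_{kj}|\leq \Lambda\,\psi^{\frac12+\frac1{2\gamma}}\rho$ and $|X_j\rho|\leq (\gamma+1)\psi^{1/2}$, giving $|b_{kj}X_j\rho|\leq C\rho\,\psi^{\frac12+\frac1{2\gamma}+\frac12}=C\rho\,\psi^{1+\frac1{2\gamma}}$. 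If $k>m$: every entry $b_{kj}$ lies in the ``otherwise'' branch, so $|b_{kj}|\leq \Lambda\,\psi^{\frac12+\frac1{2\gamma}}\rho$; pairing with $|X_j\rho|\leq \psi^{1+\frac1{2\gamma}}$ for $j\leq m$ yields $|b_{kj}X_j\rho|\leq C\rho\,\psi^{\frac32+\frac1{\gamma}}$, and pairing with $|X_{m+j}\rho|\leq(\gamma+1)\psi^{1/2}$ for $j>m$ yields $|b_{kj}X_j\rho|\leq C\rho\,\psi^{1+\frac1{2\gamma}}$. In the one case where the exponent of $\psi$ exceeds $1+\frac1{2\gamma}$, I would simply invoke $0\leq\psi\leq 1$ to absorb the surplus power.

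There is no real obstacle here; the only point requiring a moment's attention is to recognize that the sharp exponent $1+\frac1{2\gamma}$ in the statement is dictated precisely by the diagonal $m\times m$ block of $B$ paired against the horizontal derivatives $X_j\rho$, $j\leq m$ (the case $k\leq m$, $j\leq m$ above, and also $k\leq m$, $j>m$), while all the genuinely ``mixed'' or ``vertical'' blocks produce a strictly higher power of $\psi$ and are therefore harmless after using $\psi\leq 1$. Finally I would replace $\psi$ by $\mu$ via \eqref{musi} to obtain the stated bound $|b_{kj}X_j\rho|\leq C\,\rho\,\mu^{1+\frac{1}{2\gamma}}$ with $C=C(\lambda,\Lambda,\gamma,N)$.
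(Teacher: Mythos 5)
Your proof is correct, and since the paper itself offers no argument (it simply cites Lemma~3.9 of \cite{GV}), the direct bookkeeping you carry out — pairing the size estimates on $b_{kj}$ from \eqref{H} with the size estimates on $X_j\rho$ from Proposition~\ref{gauge}, absorbing any surplus power of $\psi$ via $0\leq\psi\leq 1$, and converting $\psi$ to $\mu$ through \eqref{musi} — is exactly the intended elementary argument. The only small slip is in your closing commentary, not in the proof: you say the sharp exponent comes from the ``diagonal $m\times m$ block'' and that the ``mixed or vertical blocks produce a strictly higher power,'' but as your own computation shows, the case $k\leq m,\ j>m$ is not the $m\times m$ block, and the vertical--vertical case $k>m,\ j>m$ also hits the exponent $1+\tfrac{1}{2\gamma}$ exactly; only the case $k>m,\ j\leq m$ yields a strictly larger power of $\psi$. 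This does not affect the validity of the estimate.
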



We also need the following third  derivative estimate in our analysis.  
\begin{lemma} \label{lma3.3}
	Let $F=\frac{\rho}{\mu}\sum a_{qr} X_q\rho X_r.$ Then, we have
	\begin{align}\label{third}
		&|F(b_{ij}X_i X_j \rho)|\leq C \psi.
	\end{align}
\end{lemma}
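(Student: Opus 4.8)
The plan is to expand $F(b_{ij} X_iX_j\rho)$ by the Leibniz rule into three groups of terms and bound each using the already-established estimates, exactly in the spirit of how items (ii), (vi), (vii) of Theorem \ref{Est1} and Lemma \ref{lma3.10} were proven in \cite{GV}. Write $F$ as the first-order operator $F = \frac{\rho}{\mu}\sum_{q,r} a_{qr}X_q\rho\, X_r$, so that acting on the product $b_{ij}X_iX_j\rho$ we get
\[
F(b_{ij}X_iX_j\rho) = (Fb_{ij})\, X_iX_j\rho \;+\; b_{ij}\, F(X_iX_j\rho),
\]
with the usual summation convention. The first term is handled by combining the gradient bounds on $b_{ij}$ from \eqref{H} with the second-derivative bounds from Proposition \ref{prop3.4}; since $Fb_{ij} = \frac{\rho}{\mu}\sum_{q,r} a_{qr}X_q\rho\,(X_r b_{ij})$ and $|\frac{\rho}{\mu}a_{qr}X_q\rho| \le C\rho\,\psi^{-1/2}|X_r\rho|^{1/2}\cdots$ — more precisely one uses $|X_q\rho|\le \psi^{1/2}$ together with \eqref{musi} — the term $(Fb_{ij})X_iX_j\rho$ is estimated case by case on whether the indices are $\le m$ or $>m$, the worst case always closing at $C\psi$ because the degeneracy factors $\psi^{1/2+1/2\gamma}$, $\psi^{1+1/2\gamma}$ appearing in the $X_kb_{ij}$ bounds compensate the blow-up $\mu^{1/2-1/2\gamma}/\rho$, $\mu^{3/2+1/2\gamma}/\rho$ in Proposition \ref{prop3.4}.

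For the second group, I would write $F(X_iX_j\rho) = X_iX_jF\rho - [\,\cdot\,]$ correction terms, or more directly use commutator identity $F(X_iX_j\rho) = X_i(F(X_j\rho)) + [F,X_i](X_j\rho)$ and then $F(X_j\rho) = X_jF\rho + [F,X_j]\rho$. Since $F\rho = \rho$ by \eqref{3.5}, we get $F(X_j\rho) = X_j\rho + [F,X_j]\rho$, and applying $X_i$ and using commutator bound (vii) of Theorem \ref{Est1} (which controls $[X_i,F]u - X_iu$ by $C\rho|Xu|$) repeatedly, one reduces $F(X_iX_j\rho)$ to $X_iX_j\rho$ plus terms involving at most second derivatives of $\rho$ multiplied by $C\rho$, plus terms involving $X_i$ applied to the commutator remainder. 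This is where I expect the genuine work: one needs precise control of $X_\ell$ of the commutator error $[X_j,F]\rho - X_j\rho$, which amounts to a \emph{third}-derivative-type estimate — this is precisely why the lemma is stated separately and proved in the Appendix "because it involves a long and delicate computation." The cleanest route is probably to plug in the explicit formulas for $X_iX_j\rho$ from Lemma \ref{secondder} and for $F$ (using Proposition \ref{gauge} for $X_q\rho$), differentiate these rational expressions in $z,t$ directly, and track the powers of $|z|$, $\rho$, $\psi$; every resulting monomial should carry enough copies of $\psi = |z|^{2\gamma}/\rho^{2\gamma}$ to be bounded by $C\psi$ after using $|z|\le\rho$, $|t|\le C\rho^{\gamma+1}/(\gamma+1)$, $\mu\sim\psi$, and the $b_{ij}$ bounds \eqref{H}.

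So the overall structure is: (1) Leibniz split; (2) bound $(Fb_{ij})X_iX_j\rho$ by \eqref{H} + Proposition \ref{prop3.4}, a short computation; (3) for $b_{ij}F(X_iX_j\rho)$, combine the explicit second-derivative formulas of Lemma \ref{secondder} with explicit differentiation, or equivalently iterate the commutator estimate (vii) while carefully estimating $X_\ell$ of the commutator remainders; (4) collect all contributions, noting that the "main" term $b_{ij}X_iX_j\rho$ itself is already $\le C\mu\le C\psi$ by Remark \ref{rmk3.7}, and all correction terms come with an extra factor $\rho$ or extra $\psi$-power and hence are also $\le C\psi$ on $B_1$. The main obstacle is step (3): the third-derivative bookkeeping for $F(X_iX_j\rho)$, which does not follow formally from the listed estimates and requires the explicit Appendix computation; the subtlety is that naive application of (vii) to a second derivative produces a term $X_i$ of a remainder that is only controlled if one re-derives the analogue of Proposition \ref{prop3.4} one order higher, tracking exactly how many factors of $\psi$ survive near the characteristic manifold $\{z=0\}$.
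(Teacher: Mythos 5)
Your plan matches the paper's Appendix proof exactly: the Leibniz split $F(b_{ij}X_iX_j\rho)=(Fb_{ij})X_iX_j\rho+b_{ij}F(X_iX_j\rho)$, a short case-by-case bound on the first term via \eqref{H} and Proposition \ref{prop3.4} (which the paper also dispatches as ``a standard tedious computation''), and then explicit differentiation of the formulas in Lemma \ref{secondder} together with $\mu\sim\psi$, $|z|\le\rho$, $|t|\le\rho^{\gamma+1}$ to close the second term at $C\psi$. You are also right to distrust the commutator route, and for the right reason: iterating (vii) of Theorem \ref{Est1} on second derivatives would require a third-derivative analogue of Proposition \ref{prop3.4}, which is precisely what one is trying to establish, so the paper does not attempt it and instead computes $X_r(X_iX_j\rho)$ directly in all eight index cases ($r\le m$ or $r>m$ combined with the four blocks of $(i,j)$), pairing each against the matching $b_{ij}$ and $\frac{\rho}{\mu}\sum a_{qr}X_q\rho$ bounds. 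What you have is a correct road map; the actual content of the lemma is that case-by-case bookkeeping, which you explicitly defer.
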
	
The proof of the Lemma \ref{lma3.3}  which is based on a long computation   is postponed to the appendix.

\medskip

We  then define the relevant function space that is repeatedly  used in our work. 
\begin{dfn}
For a given domain $\Om$,  we denote by  $S^{2,2}(\Om)$, the completion of $C^{\infty}(\overline{\Om})$ under the norm
\[
||f||_{S^{2,2}(\Om)}= \int_{\Om} f^2 + |Xf|^2 + \sum_{i,j=1}^N |X_i X_j f|^2   
\]
We instead indicate with $S^{2,2}_0(\Om)$ the completion of $C^{\infty}_{0}(\Om)$ under the same norm.
\end{dfn}

We now introduce the relevant notion of vanishing to infinite order.

\begin{dfn}\label{v0}
  We  say  that $u$ vanishes to infinite order at the origin $(0,0)$, if for every $\ell>0$ one has
 \begin{align}\label{vanLp}
 \int_{B_r} |u|^2 \psi  = O(r^\ell), \ \ \ \ \ \text{as}\ r \rightarrow 0.
 \end{align}
 \end{dfn}
 
 \begin{rmrk}
Throughout this paper, when we say that a constant is universal, we mean that it depends exclusively on $m, k, \beta$, on the ellipticity bound $\lambda$ on $A(z,t)$, see \eqref{ue} above, and on the Lipschitz bound $\Lambda$ in \eqref{H}. Likewise, we will say that $O(1)$, $O(r)$, etc. are universal if $|O(1)| \le C$, $|O(r)|\le C r$, etc., with $C\ge 0$ universal.
 
 \end{rmrk}

\subsection{Statement of the main results}
We now state the main results of the paper.
Our first result is the  subelliptic analogue of the corresponding quantitative uniqueness result of Bourgain and Kenig for the Euclidean Laplacian, see \cite{BK}. We also refer to the work of Bakri \cite{Bk1} for a generalisation of their result to Laplace Beltrami operators on compact manifolds. From now onwards, by $\La$, we refer to the operator  defined in \eqref{operator} where $A$ satisfies the assumptions in \eqref{H}.

\begin{thrm}\label{main}
Let $u \in S^{2,2}(B_1)$ with $|u| \leq 1$  be a solution to
\begin{equation}
\La u=Vu,
\end{equation}

 where the potential  $V$ satisfies the following bound
\begin{equation}\label{vasump}
|V(z,t)| \leq K \psi.
\end{equation}
 Then, there exists  universal $R_0 \in (0, 1/2]$  and  constants $C_1,C_2$ depending also on  $\int_{B_{\frac{R_0}{4}}}  u^2 \psi$, such that for all $0<r<  \frac{R_0}{9}$ one has
\begin{equation}\label{main1}
||u||_{L^{\infty}(B_r)} \geq C_1 \left(\frac{r}{R_0}\right)^{C_2 (K^{2/3}+1)}.   
\end{equation}
\end{thrm}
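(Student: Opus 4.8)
\textbf{Proof strategy for Theorem \ref{main}.}

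The plan is to deduce the quantitative uniqueness estimate \eqref{main1} from the principal Carleman estimate of the paper (the one labelled \eqref{est1} in the introduction, valid for functions supported in a punctured gauge ball with singular weight $\rho^{-\alpha}$), following the now-standard Bourgain--Kenig scheme adapted to the Baouendi--Grushin geometry as in \cite{BGM}. First I would fix a small universal radius $R_0$ so that all the structural estimates of Theorem \ref{Est1} and Lemma \ref{lma3.3} hold with the stated universal constants in $B_{R_0}$, and rescale so that we may work in $B_1$. The two quantitative ingredients are: (a) a three-sphere / doubling inequality for $u$ on gauge spheres, obtained by plugging a suitable cutoff of $u$ into the Carleman inequality and optimising in the weight parameter $\alpha$; and (b) a propagation-of-smallness argument that upgrades an $L^2(\psi\,dzdt)$ smallness on a small ball $B_r$ to the exponential lower bound claimed, using that $u$ is a genuine (nontrivial) solution with $\int_{B_{R_0/4}} u^2 \psi > 0$ in the denominator of $C_1, C_2$.

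In more detail, the core step is the quantitative doubling/three-ball inequality. Let $\eta$ be a radial (in $\rho$) cutoff equal to $1$ on an annulus $\{r_1 \le \rho \le r_2\}$ and supported in a slightly larger annulus avoiding the origin, so that $\La(\eta u) = \eta V u + [\La,\eta]u$, where the commutator is supported on two thin shells near $\rho = r_1$ and $\rho = r_2$ and is controlled by $|Xu| + |u|$ there, modulo the Lipschitz error terms from $B = A - I_N$ which are $O(\rho)$-small by \eqref{H} and hence absorbable. Applying \eqref{est1} to $w = \eta u$ and using the potential bound $|V| \le K\psi$ together with \eqref{musi}, the term $\alpha \int \rho^{-2\alpha} |V u|^2 \rho^{\cdots} \lesssim K \int \alpha \rho^{-2\alpha} \psi^2 |u|^2$ is absorbed into the left-hand gradient/Hessian terms once $\alpha \gtrsim K$ (this is exactly where the $K^{2/3}$ exponent will later enter, after the optimisation is carried out — one splits $V$ against the $|Xu|^2$-type term and the zeroth-order term with two different powers, which produces the $2/3$). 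One then balances the $r_1^{-\alpha}$, $r_2^{-\alpha}$ contributions of the two shells and optimises over $\alpha \in [cK, \infty)$ to get the weighted three-ball inequality
\[
\int_{B_{r_2}} u^2\psi \ \le\ C\,\Big(\int_{B_{r_1}} u^2\psi\Big)^{\theta}\Big(\int_{B_{R_0}} u^2\psi\Big)^{1-\theta} e^{CK^{2/3}}
\]
for appropriate fixed ratios $r_1 < r_2 < R_0$ and $\theta = \theta(r_1/r_2, r_2/R_0) \in (0,1)$ universal.

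From the three-ball inequality the endgame is routine: iterate it along a geometric sequence of radii $r = \rho_0 > \rho_1 > \cdots$ shrinking to $0$ to convert it into a doubling-type statement controlling $\int_{B_{2r}} u^2\psi$ by $e^{CK^{2/3}}\int_{B_r} u^2\psi$ (after normalising by $\int_{B_{R_0}} u^2\psi$, which is comparable to the $L^\infty$ normalisation $|u|\le 1$ up to universal constants since $\psi \le 1$), and then chain these doublings from scale $R_0/4$ down to scale $r$ to obtain $\int_{B_r} u^2 \psi \ge C_1' (r/R_0)^{C_2'(K^{2/3}+1)} \int_{B_{R_0/4}} u^2\psi$. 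Finally, to pass from this weighted $L^2$ lower bound to the $L^\infty$ lower bound in \eqref{main1} one uses an interior estimate for $\La$: subsolution/Caccioppoli bounds give $\int_{B_r} u^2\psi \lesssim |B_r|_\psi \,\|u\|_{L^\infty(B_{2r})}^2 \lesssim r^{Q} \|u\|^2_{L^\infty(B_{2r})}$ (here $\psi\le 1$ and the $\psi$-measure of $B_r$ is at most $Cr^Q$), and the power of $r$ so produced is harmless — it only changes the constant $C_2$ — because one absorbs $r^Q$ into $(r/R_0)^{C_2(K^{2/3}+1)}$ at the cost of enlarging $C_2$.

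\textbf{Main obstacle.} The delicate point — and the one the introduction explicitly flags — is not the Bourgain--Kenig bookkeeping but making sure the Lipschitz perturbation $B$ does not destroy the optimisation. In the constant-coefficient case \cite{BGM} the Carleman estimate \eqref{est1} is clean, but here cutting off $u$ and commuting with $\La$ produces extra terms involving $X_k b_{ij}$, $b_{ij}X_iX_j\rho$ and $F(b_{ij}X_iX_j\rho)$; these are exactly controlled by Theorem \ref{Est1}(vi)--(xiii), Remark \ref{rmk3.7} and Lemma \ref{lma3.3}, but they come with only an $O(\rho)$ or $O(\psi)$ gain rather than an arbitrarily small gain, so they can be absorbed into the left side of \eqref{est1} \emph{only} after $R_0$ is taken universally small and \emph{only} if the absorption is done before the $\alpha$-optimisation, not after. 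Keeping the $\alpha$-dependence of every error term explicit through this absorption — so that the final admissible range $\alpha \gtrsim K^{2/3}$ survives — is the real work; once that is in place the exponent $C_2(K^{2/3}+1)$ drops out of the standard iteration.
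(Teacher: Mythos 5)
Your proposal follows essentially the same route as the paper: the paper's proof of Theorem \ref{main} consists entirely of establishing the Carleman estimate \eqref{est1} (Theorem \ref{thm2}) and then invoking the Bourgain--Kenig bookkeeping as carried out in the proof of Theorem 1.3 of \cite{BGM}, which is exactly the cutoff / three-ball / iteration / $L^\infty$-passage chain you outline. The only substantive comment is that your explanation of where $K^{2/3}$ comes from is slightly off: with the estimate written as \eqref{est1}, the $\alpha^3$ in front of the zeroth-order term $\int \rho^{-2\alpha-4+\epsilon}u^2\mu$ already carries the whole weight. After moving $V u$ to the right side and using $|V|\le K\psi$ together with $\mu\sim\psi$ (from \eqref{musi}), one needs $\alpha^3\rho^{-4+\epsilon}\gtrsim K^2$, and since $\rho\le R_0\le 1$ this is guaranteed once $\alpha\gtrsim K^{2/3}$; no separate splitting of $V$ against the gradient term is involved. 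Also, your ``Main obstacle'' paragraph describes difficulties that really belong to the proof of \eqref{est1} itself (the absorption of the $B=A-I_N$ error terms via Theorem \ref{Est1}, Remark \ref{rmk3.7}, and Lemma \ref{lma3.3}), not to the Bourgain--Kenig reduction; once \eqref{est1} is in hand, the commutator $[\La,\eta]u$ produced by the cutoff is supported on annular shells and controlled by $|Xu|+|u|$ there, exactly as in the constant-coefficient case of \cite{BGM}, with no fresh interaction with the Lipschitz perturbation needed.
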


It is worth emphasizing that, when $k=0$, we have $N = m$ and then from \eqref{psi} we have $\psi \equiv 1$. In such a case the constant $K$ in \eqref{vasump} can be taken to be  $||V||_{L^{\infty}}$, and Theorem \ref{main} reduces to the cited Euclidean result in \cite{BK}. We note that the sharpness of the estimate corresponding to \eqref{main1} follows from counterexamples due to  Meshkov, see \cite{Me}.

For zero order $C^1$ perturbation of the operator $\La$, we also obtain the following  analogue of a Carleman estimate proven by  Bakri in \cite{Bk} for Laplace Beltrami operators on manifolds. 

\begin{thrm} \label{DF}
	Let $0< \ve < 1$ be fixed. There exists a universal $R_0>0$, depending also  on $\ve$, such that  for all $R \leq R_0$, $u \in S^{2,2}_{0}(B_R)$,  and $V$ satisfying
	\begin{equation}\label{vassump1}
	|V| + |FV| \leq K \psi,
	\end{equation}
	 one has \begin{align}\label{df}
		&\alpha^3\int \rho^{-2\alpha- 4+\ve} u^2 e^{2\alpha \rho^{\ve}} \mu + \alpha  \int_{B_R}  \rho^{-2\alpha-2+\epsilon} e^{2\alpha  \rho^{\epsilon}} \langle AXu, Xu \rangle  dz dt \\
		& \leq  C \int \rho^{-2\alpha} e^{2\alpha \rho^{\ve}} (\La u +V u)^2 \mu^{-1}\notag,
	\end{align}
	for  universal constants $C, C_1>0$  depending also on $\ve$ such that  $$\alpha \geq  C_1 (K^{1/2} +1).$$
\end{thrm}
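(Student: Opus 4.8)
The plan is to prove \eqref{df} by choosing the exponential weight $\sigma(\rho) = -\alpha\rho^{\varepsilon} + (2\alpha + 4 - \varepsilon)\log(1/\rho)$ — or equivalently to set $w = \rho^{-\alpha - 2 + \varepsilon/2} e^{\alpha\rho^{\varepsilon}} u$ — and to conjugate the operator $\La$ by this weight, reducing matters to an $L^2$ coercivity estimate for the conjugated operator $\La_\sigma = e^{-\sigma}\La e^{\sigma}$ acting on compactly supported functions. This is precisely the scheme used to establish the "base" Carleman estimate in this paper (the one behind Theorem \ref{main}), applied with the weight $\phi(\rho) = \rho^{-\alpha}e^{\alpha\rho^{\varepsilon}}$ in place of a pure power $\rho^{-\alpha}$; the whole point of Theorem \ref{DF} is that the extra room coming from the factor $\rho^{\varepsilon}$ in the exponent produces the gain of a full power $\rho^{\varepsilon}$ (hence $\rho^{-2\alpha - 4 + \varepsilon}$ rather than $\rho^{-2\alpha-4}$ on the left) at the cost of only requiring $\alpha \gtrsim K^{1/2}$ rather than $\alpha \gtrsim K^{2/3}$. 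So first I would record the conjugation identity, splitting $\La_\sigma$ into its formally symmetric and skew-symmetric parts $\La_\sigma = S_\sigma + \mathcal{A}_\sigma$ with respect to the measure $\mu\,dzdt$ (or $\psi\,dzdt$), and then expand $\int (\La_\sigma v)^2 \mu^{-1} \geq 2\int (S_\sigma v)(\mathcal{A}_\sigma v)\mu^{-1} = \int [\mathcal{A}_\sigma, S_\sigma] v \cdot v\,\mu^{-1}$.

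The second and main step is to compute the commutator $[\mathcal{A}_\sigma, S_\sigma]$ and to show it is bounded below by the two positive terms on the left of \eqref{df}. Here the vector field $F$ of \eqref{defF} plays the role of the radial derivative: by \eqref{3.5} we have $F\rho = \rho$, by Theorem \ref{Est1}(i) $\operatorname{div}F = Q + O(\rho)$, and crucially the third-derivative estimate of Lemma \ref{lma3.3} (together with the first- and second-derivative estimates collected in Theorem \ref{Est1} and Propositions \ref{gauge}, \ref{prop3.4}) controls the error terms $b_{ij}X_iX_j\rho$ and their $F$-derivatives that arise because $A \neq I_N$. The leading part of the commutator, as in the constant-coefficient computation, produces a Rellich-type expression whose dominant contributions are $\sim \alpha^3 \rho^{-2\alpha - 4 + \varepsilon}e^{2\alpha\rho^{\varepsilon}}\mu$ (from differentiating the weight three times, the $\varepsilon$ appearing because $\frac{d}{d\rho}(\rho^{\varepsilon}) = \varepsilon\rho^{\varepsilon-1}$ shifts the homogeneity by one) and $\sim \alpha \rho^{-2\alpha - 2 + \varepsilon}e^{2\alpha\rho^{\varepsilon}}\langle AXu, Xu\rangle$; the Lipschitz perturbation contributes error terms that are lower order in $\alpha$ and carry an extra power of $\rho$, hence are absorbable for $R_0$ small and $\alpha$ large.

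The third step is to absorb the zero-order term: writing $\La u = (\La u + Vu) - Vu$ and using \eqref{vassump1}, the contribution of $Vu$ to the right-hand side is $\lesssim K^2 \int \rho^{-2\alpha}e^{2\alpha\rho^{\varepsilon}} u^2 \psi \cdot \mu^{-1}$, which after \eqref{musi} is $\lesssim K^2 \int \rho^{-2\alpha}e^{2\alpha\rho^{\varepsilon}}u^2 \psi$. Comparing with the first term on the left, which is $\sim \alpha^3 \int \rho^{-2\alpha - 4 + \varepsilon}e^{2\alpha\rho^{\varepsilon}}u^2\mu$, and noting $\rho^{-2\alpha} = \rho^{4-\varepsilon}\cdot\rho^{-2\alpha-4+\varepsilon} \leq R_0^{4-\varepsilon}\rho^{-2\alpha-4+\varepsilon}$ on $B_{R_0}$, this term is absorbed as soon as $C K^2 R_0^{4-\varepsilon} \leq \tfrac12 \alpha^3$, which holds under $\alpha \geq C_1(K^{1/2}+1)$ (here is exactly where the power $K^{1/2}$ enters: one needs $\alpha^3 \gtrsim K^2$). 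The term involving $FV$ enters through the commutator of $\mathcal{A}_\sigma$ with multiplication by $V$ — it produces $\lesssim K\int \rho^{-2\alpha+?}\dots$ terms — and is handled the same way, again absorbable for $\alpha$ large. One must also take care that $R_0$ depends on $\varepsilon$ (the constants hidden in "$O(\rho)$" degrade as $\varepsilon \to 0$ because of factors like $1/\varepsilon$ from $\rho^{\varepsilon}$-derivatives), which is why the statement allows $R_0$ to depend on $\varepsilon$.

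The hard part, as the authors themselves flag, will be the bookkeeping in the commutator computation: tracking every error term produced by replacing $Z$ (or $\Delta$-type operators) with $F$ and $A$-weighted analogues, and verifying via Lemma \ref{lma3.3} and Theorem \ref{Est1}(i)--(xiii) that each such term is genuinely of lower order — either lower order in $\alpha$, or carrying a surplus power of $\rho$, or both — so that it can be absorbed into the two good terms after shrinking $R_0$ and enlarging $\alpha$. The interpolation-type argument needed to close this (balancing terms of the form $\alpha^a\rho^{-2\alpha+b}$ against $\alpha^{a'}\rho^{-2\alpha+b'}$ with $a > a'$, $b < b'$) is routine in spirit but delicate in execution; everything else — the conjugation identity, the Rellich identity \eqref{re}, the absorption of $V$ and $FV$ — is comparatively mechanical once the commutator lower bound is in hand.
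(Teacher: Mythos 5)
There is a genuine gap in your third step, and it is precisely the step that matters for Theorem~\ref{DF}. You propose to absorb the potential by writing $\La u = (\La u + Vu) - Vu$ and moving $Vu$ to the right-hand side, bounding its contribution by $K^2\int\rho^{-2\alpha}e^{2\alpha\rho^\varepsilon}u^2\psi$ and comparing against $\alpha^3\int\rho^{-2\alpha-4+\varepsilon}e^{2\alpha\rho^\varepsilon}u^2\mu$. But the inequality $CK^2R_0^{4-\varepsilon}\le\tfrac12\alpha^3$ (with $R_0$ fixed independently of $K$) requires $\alpha\gtrsim K^{2/3}$, not $\alpha\gtrsim K^{1/2}$. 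Your parenthetical claim that "$\alpha^3\gtrsim K^2$" gives "$\alpha\ge C_1(K^{1/2}+1)$" is an arithmetic error; $\alpha^3\gtrsim K^2$ is the Bourgain--Kenig scaling $\alpha\gtrsim K^{2/3}$ of Theorem~\ref{main}, for which the extra hypothesis $|FV|\le K\psi$ would be irrelevant. The whole content of Theorem~\ref{DF} is the improvement from $K^{2/3}$ to $K^{1/2}$, and your proposed absorption scheme cannot produce it.

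The correct idea, which the paper uses, is to \emph{not} transfer $Vu$ to the right side. Instead one conjugates the full operator $\La+V$ by the weight (i.e., expands $\La u + Vu$ in terms of $v=\rho^{-\beta}e^{\alpha\rho^\varepsilon}u$ as in \eqref{f111}) and keeps the term $V\rho^\beta e^{-\alpha\rho^\varepsilon}v$ as one of the ``$b$'' terms in the inequality $(a+b)^2\ge a^2+2ab$. This produces a cross term that is \emph{linear} in $V$, namely $4\beta\int\rho^{-2\alpha+2\beta-2}Fv\,Vv = 2\beta\int V\rho^{-Q+2}F(v^2)$, which is then integrated by parts to give $-2\beta\int FV\,\rho^{-Q+2}v^2 + 4\beta\int V\rho^{-Q+2}v^2$. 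Both pieces are bounded by $CK\beta\int\rho^{-Q+2}v^2\psi$ using $|V|+|FV|\le K\psi$, and this is absorbed into $c\beta^3\varepsilon^2\int\rho^{-Q+\varepsilon}v^2\psi$ as soon as $\beta^2\gtrsim K$, i.e.\ $\alpha\gtrsim K^{1/2}$. Your aside that ``$FV$ enters through the commutator of $\mathcal{A}_\sigma$ with multiplication by $V$'' gestures at the right mechanism, but it is inconsistent with your step-three plan of moving $Vu$ to the right: once $Vu$ is a forcing term, $FV$ never appears, and you are stuck at $K^{2/3}$. The conjugation/commutator skeleton you describe, the role of $F$, Lemma~\ref{lma3.3}, and the interpolation of the gradient term are all consonant with the paper's proof; the missing (and essential) piece is the treatment of $V$ as part of the conjugated operator so that it contributes linearly and can be traded against $FV$ by integration by parts.
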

As a consequence of the estimate \eqref{df}, we deduce the following quantitative uniqueness result for "$C^{1}$" type potentials $V$ by repeating  the arguments as in the proof of Theorem 1.3 in \cite{BGM}. 

\begin{thrm}\label{DF1}
Let $u$ solve 
\[
\La u + Vu =0
\]
in $B_1$ where $V$ satisfies \eqref{vassump1}. Then there exists $R_0$ universal such that for all $r \leq R_0$, we have
\begin{equation}\label{m}
||u||_{L^{\infty}(B_r)} \geq C_1 \left(\frac{r}{R_0}\right)^{C_2 (\sqrt{K}+1)},
\end{equation}
where $C_1, C_2$ have the same dependence as in Theorem \ref{main}. 
\end{thrm}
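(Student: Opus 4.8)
The plan is to derive Theorem~\ref{DF1} from the Carleman estimate \eqref{df} of Theorem~\ref{DF} by the same machinery used for the Bourgain--Kenig type bound in \cite{BGM}, only now using the weight $e^{2\alpha\rho^\ve}$ adapted to $C^1$ potentials. Since $\La u + Vu = 0$ in $B_1$, the estimate \eqref{df} applied to suitable cutoffs of $u$ will give a three-ball (or doubling) inequality with the \emph{sharp} dependence $\alpha\gtrsim \sqrt K + 1$ on the potential, which is what produces the exponent $C_2(\sqrt K+1)$ in \eqref{m} rather than the $K^{2/3}$ of Theorem~\ref{main}.

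First I would fix $R_0$ universal as in Theorem~\ref{DF} and, for $0<r$ small, choose a smooth cutoff $\eta$ which is $1$ on $B_{2r}\setminus B_{r}$-type annuli and supported in $B_{R_0/2}\setminus B_{r/2}$, so that $\eta u \in S^{2,2}_0(B_{R_0})$ and is admissible in \eqref{df}. Then $\La(\eta u) + V(\eta u) = [\La,\eta]u =: g$, where $g$ is supported in the two annuli where $\nabla\eta\neq 0$ and is controlled, via Theorem~\ref{Est1}, by $|Xu|$ and $|u|$ there (picking up powers of $\rho$). Plugging into \eqref{df} and using the lower bounds \eqref{musi} for $\mu$ and the ellipticity \eqref{ue} for $\langle AXu,Xu\rangle$, the left side dominates a weighted $L^2$ norm of $u$ (and of $Xu$) on the good region, while the right side splits into a contribution on the inner annulus near $B_{r}$ and one on the outer annulus near $B_{R_0}$. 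Evaluating the weight $\rho^{-2\alpha}e^{2\alpha\rho^\ve}$ at the three relevant scales $r$, some fixed intermediate radius, and $R_0$, and absorbing $Xu$-terms on the right by a Caccioppoli inequality for $\La u = -Vu$ (standard, using \eqref{vassump1} and \eqref{musi}), one arrives at an inequality of the shape
\[
\Big(\int_{B_{\theta R_0}\setminus B_{r}} u^2\psi\Big) \lesssim e^{C\alpha}\, r^{\,c\alpha}\int_{B_r} u^2\psi \;+\; e^{-c'\alpha R_0^\ve}\int_{B_{R_0}} u^2\psi,
\]
valid for all $\alpha\ge C_1(\sqrt K+1)$.

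Next I would optimize in $\alpha$. Writing $A_\rho := \int_{B_\rho} u^2\psi$, if $A_r$ is not already comparable to $A_{R_0}$ (in which case \eqref{m} is trivial), one chooses $\alpha$ of the form $\alpha = \max\{C_1(\sqrt K+1),\ c''\log(A_{R_0}/A_r)\}$ so that the last term is absorbed into the left side, yielding $A_{R_0}\lesssim e^{C\alpha} r^{c\alpha} A_r$, i.e. a doubling-type estimate with doubling exponent $O(\sqrt K + 1 + \log(A_{R_0}/A_r))$. Iterating this across dyadic scales from $r$ up to $R_0$ — exactly as in the proof of Theorem~1.3 in \cite{BGM} — and converting the $L^2(\psi)$ information into an $L^\infty$ bound via a Moser-type estimate for solutions of $\La u = -Vu$ (again using \eqref{vassump1}), one obtains $\|u\|_{L^\infty(B_r)}\ge C_1 (r/R_0)^{C_2(\sqrt K+1)}$ with $C_1,C_2$ depending on $\int_{B_{R_0/4}}u^2\psi$ as claimed.

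The main obstacle, as in \cite{BGM}, is the commutator/cutoff bookkeeping: one must verify that $g=[\La,\eta]u$ is genuinely admissible (so that $\eta u \in S^{2,2}_0$, using that $u\in S^{2,2}(B_1)$ and the regularity afforded by the equation) and that the $|Xu|$-contributions in $g$ can be absorbed — this is where the Caccioppoli/Moser inequalities for the degenerate operator $\La$ and the precise powers of $\psi$ and $\rho$ in \eqref{vassump1}, \eqref{H} and Theorem~\ref{Est1} must be used carefully so that no term on the right of \eqref{df} is left uncontrolled. Once the three-annulus inequality is in hand, the optimization in $\alpha$ and the iteration are routine and identical to the constant-coefficient case treated in \cite{BGM}; the only new feature is tracking that all implied constants remain universal in the sense of the Remark above.
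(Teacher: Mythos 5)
Your proposal is correct and follows essentially the same route as the paper: the paper itself proves Theorem~\ref{DF1} simply by invoking the Carleman estimate \eqref{df} of Theorem~\ref{DF} (whose threshold $\alpha\gtrsim\sqrt K+1$ is what yields the improved exponent) and then ``repeating the arguments as in the proof of Theorem~1.3 in \cite{BGM}'', i.e.\ cutoff, three-ball inequality, optimization in $\alpha$, iteration, and a Moser-type $L^\infty$ bound, exactly as you outline.
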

It is to be noted that  in the Euclidean case, the constant $C$ can be taken to be the $C^{1}$ norm of $V$. As previously mentioned,  Theorem \ref{DF1} has already been proven in \cite{BG1} differently     using a variant of the   frequency function approach which in turn is inspired by the work of Zhu in \cite{Zhu1}  for the standard Laplacian ( see also \cite{BG} for the extension of the frequency function approach of Zhu to variable coefficients at the boundary). 
 For a historical account, we  note that such an estimate in the Euclidean case was first established by Bakri in the above cited paper \cite{Bk} using the Euclidean version of the Carleman estimate \eqref{df}.  Such result generalised the sharp vanishing order estimate of Donnelly and Fefferman in \cite{DF1} for eigenfunctions of the Laplacian on compact manifolds.  Finally, we refer to the  paper \cite{Ru1} for an interesting generalisation to nonlocal equations of the quantitative uniqueness result in \cite{Bk}, and also to \cite{B} for a generalisation to Carnot groups of arbitrary step. 
 
 \medskip

We then    study strong unique continuation for sublinear  equations of the type
 \begin{equation}\label{sub}
 -\La u=  f((z,t), u) \psi + Vu, 
 \end{equation}
 where $V$ satisfies \eqref{vasump}, and $f$ and its primitive, $G((z,t),  s) = \int_{0}^{s} f((z,t) s) ds$, satisfies the following assumptions analogous to those in \cite{Ru} and \cite{SW} in the uniformly elliptic case:   
 \begin{equation}\label{a1}
 \begin{cases}
 f((z,t), 0) =0,
 \\
  0 < s f((z,t), s) \leq q G((z,t), s), \ \text{for some $q \in (1, 2)$ and  $s \in (-1, 1) \setminus \{0\}$},
  \\
  |\nabla_{(z,t)} f| \leq K |f|,\ |\nabla_{(z,t)} G| \leq K G,
  \\
  f((z,t), s) \leq \kappa s^{p-1}\ \text{for some $p \in (1,2)$},
  \\
  G((z,t), 1) \geq \ve_0\ \text{for some $\ve_0>0$}.
\end{cases}
\end{equation}
 We note that the conditions   in \eqref{a1} imply that  for some constant $c_0, c_1$, we have 
\begin{equation}\label{a0}
 c_1 s^{p} \geq  G(., s) \geq c_0 s^{q},\ \text{for}\ s \in (-1,1).
  \end{equation}
  A  prototypical $f$ satisfying \eqref{a1} is 
  \[
  f((z,t), u) = \sum_{i=1}^\ell c_i(z,t) |u|^{q_i-2} u,\ \text{where for each}\ i, q_i \in (1,2), 0<k_0<c_i< k_1, \text{and}\ |\nabla c_i| < K,
  \]
 for some $k_0, k_1\ \text{and}\ K$.  In this case, we can take  $q= \max \{q_i\}$ and $p= \min \{q_i \}$.

Unique continuation properties for uniformly elliptic nonlinear equations of the type 
\[
-\operatorname{div}(A(x) \nabla u) = f(x, u),
\]
with $f$ satisfying the assumptions in \eqref{a1}, have been recently studied in \cite{SW} and \cite{Ru}. More precisely,  weak unique continuation properties for such  sublinear equations 
were first obtained in \cite{SW} using the frequency function approach. Subsequently, in \cite{Ru} the author established the strong unique continuation property (see also \cite{ST} and \cite{To}). In this  work we generalise R\"uland's result to degenerate elliptic equations of the type \eqref{sub}. For related results in the parabolic setting, we refer to \cite{AB} and \cite{BMa}. 
 We have the following generalization of the result of R\"uland for the variable coefficient Baouendi-Grushin operators. 

\begin{thrm}\label{main2}
Let $u \in S^{2,2}(B_1)$ be a solution to \eqref{sub} in $B_1$ where $f$ satisfies the bounds in \eqref{a1} and $V$ satisfies \eqref{vasump}. Furthermore, assume that $||u||_{L^{\infty}(B_1)} \leq 1$.  If $u$ vanishes to infinite order at $(0,0)$ in the sense of Definition \ref{v0}, then $u \equiv 0$.

\end{thrm}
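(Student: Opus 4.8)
The plan is to follow the strategy of R\"uland in \cite{Ru}, suitably adapted to the degenerate Baouendi-Grushin setting. The key tool is the Carleman estimate \eqref{est1} (equivalently the scaling critical estimate \eqref{f10}) established earlier, applied to the sublinear equation \eqref{sub}. First I would set up an argument by contradiction: assuming $u$ vanishes to infinite order at $(0,0)$ in the sense of Definition \ref{v0} but $u \not\equiv 0$, I would derive a lower bound on the decay of $\int_{B_r} u^2 \psi$ that is incompatible with infinite-order vanishing. The engine for such a lower bound is a \emph{doubling-type inequality}: one shows that for small $r$, the quantity $\int_{B_{2r}} u^2\psi$ is controlled above by a universal constant times $\int_{B_r} u^2\psi$, which iterated gives at worst polynomial decay, contradicting \eqref{vanLp}. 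The sublinearity exponents $q, p \in (1,2)$ in \eqref{a1} and the consequent two-sided bound \eqref{a0} on the primitive $G$ are exactly what is needed to absorb the nonlinear right-hand side into the Carleman weight: since $q<2$, the term $f((z,t),u)\psi$ behaves \emph{subcritically} relative to the linear term $\La u$, so after multiplying through by $\rho^{-2\alpha}e^{\dots}$ and integrating, the nonlinear contribution can be bounded by a small (in $\alpha$, or in $r$) multiple of the left-hand side plus lower-order terms.

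The key steps, in order, would be: (1) cut off $u$ near the origin and apply the Carleman estimate \eqref{est1} to $\eta u$, where $\eta$ is a suitable radial cutoff in the gauge $\rho$, producing boundary/commutator error terms supported in an annulus; (2) estimate the right-hand side $\int \rho^{-2\alpha}(\La(\eta u))^2\mu^{-1}$ by splitting $\La(\eta u) = \eta \La u + [\text{cutoff errors}]$ and using the equation \eqref{sub} to replace $\La u$ by $-f\psi - Vu$; (3) handle the potential term $Vu$ using $|V|\le K\psi$ and \eqref{musi}, absorbing it into the left-hand side for $\alpha$ large (this is standard and parallels Theorem \ref{main}); (4) handle the nonlinear term $f((z,t),u)\psi$: here I would use $0 < sf(s) \le qG(s)$ together with $G(\cdot,s) \le c_1 s^p$ and $p<2$, so that $|f((z,t),u)|^2 \psi^2 \mu^{-1} \lesssim |u|^{2p-2}\psi |u|^2 \le \psi u^2$ on $\{|u|\le 1\}$ (since $2p-2 > 0$), and in fact the extra power $|u|^{2p-2}$ gives decay that lets the term be absorbed; (5) conclude the doubling inequality by optimizing over $\alpha$, then iterate to contradict infinite-order vanishing. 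A variant uses the equivalence of infinite-order vanishing with $u$ being "flat" and then a direct Carleman-based vanishing-order argument as in \cite{Ru, SW}.

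The main obstacle I anticipate is step (4) combined with making the absorption \emph{quantitative} in the degenerate geometry. Unlike the uniformly elliptic case, the weight $\psi$ degenerates on the characteristic manifold $M = \R^m\times\{0\}$, so one cannot freely trade powers of $\psi$; the bounds in \eqref{musi}, Theorem \ref{Est1}, and the estimates on $F$, $\rho$, and $\psi$ must be invoked carefully to ensure every error term (cutoff commutators via Theorem \ref{Est1}(vii),(xii),(xiii); the Lipschitz-perturbation terms of the principal part; the nonlinear term) carries the \emph{correct} power of $\psi$ and $\rho$ to be dominated by the gain on the left side of \eqref{est1}. A second, more delicate point is that the sublinear nonlinearity is not Lipschitz in $u$ near $u=0$, so one cannot treat $f\psi$ as a linear potential $\tilde V u$ with bounded $\tilde V$; instead the bound $f((z,t),s)\le \kappa s^{p-1}$ must be used to show the "effective potential" $f/u \sim u^{p-2}$ is \emph{large} near the zero set but still integrable against the Carleman weight because of the infinite-order vanishing hypothesis — this interplay between the hypothesis and the nonlinearity is where the argument of \cite{Ru} is genuinely used, and porting it requires the three-ball/doubling machinery to be set up in the gauge balls $B_r$ with the measure $\psi\,dz\,dt$.
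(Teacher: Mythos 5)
Your plan contains a genuine gap at step (4), and it is precisely the gap that the paper's tailored Carleman estimate (Theorem \ref{sub1}, estimate \eqref{f10}) is designed to close. You propose to apply the \emph{linear} Carleman estimate \eqref{est1} to $\eta u$, replace $\La u$ via the equation, and then absorb the nonlinear contribution using the chain
\[
|f((z,t),u)|^2\,\psi^2\,\mu^{-1}\ \lesssim\ |u|^{2p-2}\,\psi\,|u|^2\ \le\ \psi\, u^2 .
\]
This inequality is false. From \eqref{a1} one only has $|f((z,t),u)|\le \kappa |u|^{p-1}$, hence $|f|^2\psi^2\mu^{-1}\sim|f|^2\psi\le \kappa^2|u|^{2p-2}\psi$ (using $\mu\sim\psi$), and since $p\in(1,2)$ gives $2p-2\in(0,2)$, the quantity $|u|^{2p-2}$ \emph{dominates} $u^2$ on $\{|u|\le 1\}$ — the inequality runs the wrong way, and the ratio $|u|^{2p-2}/u^2=|u|^{2p-4}\to\infty$ as $u\to 0$, which is exactly the regime relevant for unique continuation. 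So the nonlinearity cannot be treated as a perturbation absorbable by the $\alpha^3\int \rho^{-2\alpha-4+\epsilon}u^2\mu$ gain of \eqref{est1}, and the parenthetical remark that \eqref{est1} and \eqref{f10} are ``equivalent'' is incorrect: \eqref{f10} is strictly stronger and is not the scaling-critical estimate (\eqref{har1} is).

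The missing idea is the one that makes Theorem \ref{sub1} work: instead of perturbing, one keeps $\La u + f((z,t),u)\psi$ together as the ``operator'' on the right-hand side of the Carleman estimate, and in the lower bound $(a+b)^2\ge a^2+2ab$ the cross term involving $f$ is rewritten via $F\big(G((z,t),\rho^\beta e^{-\alpha\rho^\epsilon}v)\big)$, exploiting that $G$ is the $s$-antiderivative of $f$, the inequality $0<sf\le qG$ with $q<2$, the gradient bound $|\nabla_{(z,t)}G|\le KG$, and integration by parts with Theorem \ref{Est1}(i)--(ii). The net effect is a \emph{new positive term} $c\beta^2\int\rho^{-2\alpha-2}e^{2\alpha\rho^\epsilon}G((z,t),u)\psi$ on the left, which by \eqref{a0} controls $\int\rho^{-2\alpha-2}e^{2\alpha\rho^\epsilon}|u|^q\mu$. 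Once \eqref{f10} is in hand, plugging in \eqref{sub} turns the right-hand side into $\int\rho^{-2\alpha}e^{2\alpha\rho^\epsilon}(Vu)^2\mu^{-1}\le K^2\int\rho^{-2\alpha}e^{2\alpha\rho^\epsilon}u^2\psi$, which \emph{can} be absorbed for $\alpha$ large, and only then does the three-ball/doubling iteration you describe go through (this is the ``repeat the arguments in \cite{BGM}'' step). Your intuition at the very end — that the effective potential $f/u\sim u^{p-2}$ is unbounded near the zero set and the sublinearity must be exploited structurally — is exactly right, but the proposal never turns that intuition into the decisive move of building the nonlinearity into the Carleman estimate itself.
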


The reader should  note that the assumption on the sign of $f, G$ in \eqref{a1} is not restrictive because, even in the Euclidean case, the strong unique continuation property fails when $f= -|u|^{q-2}u$ and $A=\mathbb{I}$. This follows from a one dimensional  counterexample in \cite{SW}. 

Our final result concerns the following  non-Euclidean analogue   of a Carleman estimate due to Regbaoui in \cite{Reg}.

\begin{thrm} \label{hardy}
	There exists  universal $C>0$ such that for every $\beta >0$ sufficiently large,  $R_0$ sufficiently small and $u \in S^{2,2}_{0}(B_R \setminus \{0\})$ with  $\text{supp}\ u \subset (B_R \setminus \{0\})$ for $R\leq R_0$, one has
	\begin{align}\label{har1}
	&\beta^3 \int_{B_R} \rho^{-Q} e^{\beta (\log \rho)^2}u^2 \mu\ dzdt +  \beta \int_{B_R}  \rho^{-Q+2} e^{\beta (\log \rho)^2} \langle A \, Xu, Xu\rangle  dz dt \\
	&\leq C  \, \int \rho^{-Q+4} e^{\beta (\log \rho)^2} (\La  u)^2 \mu^{-1} \, dz dt. \notag
	\end{align}
	
\end{thrm}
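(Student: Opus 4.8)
The plan is to follow the same conjugation strategy used for the estimates \eqref{est1} and \eqref{f10}, but now with the weight $e^{\beta(\log\rho)^2}$ and the critical power $\rho^{-Q+4}$ on the right-hand side, which is precisely the subelliptic analogue of Regbaoui's scaling-critical choice. Concretely, write $\phi = \phi(\rho)$ for the Carleman weight (so that $e^{\beta(\log\rho)^2} = e^{2\phi}$ with $2\phi = \beta(\log\rho)^2$), set $v = e^{\phi}\rho^{(-Q+4)/2}u$ (or the analogous substitution that absorbs the polynomial prefactor and symmetrizes the conjugated operator), and compute the conjugated operator $\La_\phi v := e^{\phi}\rho^{(-Q+4)/2}\,\La\!\left(e^{-\phi}\rho^{-(-Q+4)/2}v\right)$, weighted against $\mu$. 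First I would carry out this computation for the model operator $\Ba$ (i.e.\ $A = I_N$): here the identities \eqref{ii}, \eqref{h10}, \eqref{hg}, \eqref{Zpsi}, and $\operatorname{div}Z = Q$ make the principal terms explicit, the factor $\rho^{-Q}$ emerges naturally from the $\delta_\la$-homogeneity and the "critical" nature of the exponent $-Q+4$, and one is left with a one-dimensional ODE-type computation in the variable $\log\rho$ for the radial part, exactly as in \cite{Reg}. Splitting $\La_\phi$ into its symmetric part $S$ and antisymmetric part $\mathcal A$ (with respect to the measure $\mu\,dzdt$, or $dzdt$ after rescaling), one then estimates
\[
\int (\La_\phi v)^2\, \geq\, \int (Sv)^2 + \int (\mathcal A v)^2 + \int v\,[S,\mathcal A]\,v,
\]
and the commutator $[S,\mathcal A]$ must produce, after integration by parts, the positive quantities $\beta^3\int\rho^{-Q}e^{\beta(\log\rho)^2}u^2\mu$ and $\beta\int\rho^{-Q+2}e^{\beta(\log\rho)^2}\langle AXu,Xu\rangle$. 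The three powers of $\beta$ come from differentiating $\phi' \sim \beta(\log\rho)/\rho$ twice and from the structure of the radial ODE, while the gradient term comes from the horizontal (angular) part of the commutator together with the ellipticity bound \eqref{musi}, \eqref{ue}.

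The second and genuinely harder half is to control the error terms generated by the Lipschitz perturbation $B = A - I_N$. Replacing $\Ba$ by $\La$ introduces: (a) terms where $\La$ acts on the weight $e^{-\phi}\rho^{(-Q+4)/2}$ through the cross-terms $\langle B X\rho, X(\cdot)\rangle$, i.e.\ through $\sigma$ and its derivatives; (b) commutator errors $[X_i, F]$, $[X_i, \sigma Z/\mu]$, $[X_\ell, (\rho/\mu)\sum b_{ij}X_j\rho\, X_i]$, which by Theorem~\ref{Est1}(vii), (xii), (xiii) are all $O(\rho)|Xu|$; (c) a third-derivative error $F(b_{ij}X_iX_j\rho)$ controlled by Lemma~\ref{lma3.3} as $O(\psi)$. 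The key structural point, as in the proofs of \eqref{est1} and \eqref{f10}, is that every such error term carries an extra power of $\rho$ (or equivalently an extra $\rho$ relative to $\psi$) compared with the corresponding main term, because of the Hölder/Lipschitz bounds in \eqref{H} and the estimates collected in Theorem~\ref{Est1} and Remark~\ref{rmk3.7}. Therefore, on $B_R$ with $R \le R_0$ small, these errors can be absorbed into the good terms $\beta^3\int\rho^{-Q+?}\cdots$ and $\beta\int\rho^{-Q+2}\cdots$ — but only after a \emph{delicate interpolation argument}: the error terms involving $|Xu|$ at an intermediate power of $\rho$ must be split, via Young's inequality, between the zeroth-order term $\beta^3\int\rho^{-Q}u^2\mu$ and the gradient term $\beta\int\rho^{-Q+2}\langle AXu,Xu\rangle$, choosing the splitting parameter proportional to an appropriate power of $\beta$ so that, for $\beta$ large and $R_0$ small, a fixed fraction of each good term survives. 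I expect this interpolation/absorption bookkeeping — keeping track of the exact powers of $\rho$, $\psi$, $\beta$, and $\log\rho$ in each of the (many) error terms and verifying they are all subcritical — to be the main obstacle, exactly as the paper flags in the introduction.

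A subtlety specific to the logarithmic weight $\phi = \tfrac12\beta(\log\rho)^2$ (as opposed to the power weights $\rho^{-\alpha}$ used in Theorems \ref{main}--\ref{DF}) is that $\rho\phi' = \beta\log\rho \to +\infty$ as $\rho\to 0$, so it is not bounded; one must check that the error terms are controlled by \emph{relative} size (extra powers of $\rho$) rather than by smallness of $\rho\phi'$, and that the "large parameter" playing the role of $\alpha$ in the earlier estimates is effectively $\beta\log(1/\rho)$, which is large uniformly on $B_{R_0}$ once $R_0$ is small. This is the point where I would mimic the Rellich-type identity \eqref{re} rather than Regbaoui's polar-decomposition argument: apply the Rellich identity to $v$ with the vector field $F$ from \eqref{defF} (which satisfies $F\rho = \rho$ by \eqref{3.5} and $|F - Z| \le C\rho^2$ by Theorem~\ref{Est1}(v)), using $\operatorname{div}F = Q + O(\rho)$ and $|\langle FAXu, Xu\rangle| \le C\rho|Xu|^2$ to generate the divergence structure that yields the positive boundary-free quadratic forms. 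Once the model computation and the error absorption are in place, the estimate \eqref{har1} follows by choosing $\beta$ large and then $R_0 = R_0(\beta)$ small; since $u \in S^{2,2}_0(B_R\setminus\{0\})$ with support away from the origin, all integrations by parts are justified with no boundary contributions and no issues at $\{z=0\}$ or at $0$.
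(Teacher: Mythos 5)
Your proposal captures the right toolkit (conjugation, the vector field $F$, the Rellich identity \eqref{re}, the error estimates from Theorem~\ref{Est1} and Lemma~\ref{lma3.3}, interpolation for large $\beta$ and small $R_0$), but the route you sketch is genuinely different from the one the paper takes, and is also internally split between two competing strategies.

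First, the paper does \emph{not} absorb the polynomial prefactor $\rho^{(-Q+4)/2}$ into the conjugation. It sets simply $v = e^{\frac{\beta}{2}(\log\rho)^2}u$; the scaling-critical powers $\rho^{-Q}$, $\rho^{-Q+2}$, $\rho^{-Q+4}$ then appear as explicit weights in the integrals rather than being folded into the substitution. This is a small but real departure from Theorems~\ref{thm2} and~\ref{DF} (where a power $\rho^{\beta}$ was absorbed with $\beta = (2\alpha+4-Q)/2$), and from what you propose.

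Second, and more substantially, the paper does \emph{not} carry out the symmetric/antisymmetric decomposition $\La_\phi = S + \mathcal{A}$ followed by the commutator identity $\int(\La_\phi v)^2 = \int(Sv)^2 + \int(\mathcal{A}v)^2 + \int v[S,\mathcal{A}]v$, nor does it reduce to a one-dimensional radial ODE in $\log\rho$. That is precisely Regbaoui's approach, from which the paper explicitly distances itself. Instead, the paper applies the elementary inequality $(a+b)^2 \geq a^2 + 2ab$ with the single distinguished term $a = 2\beta(-\log\rho)\rho^{-2}\mu\,Fv$, and then handles the cross term $2ab$ by (i) integration by parts on the pieces proportional to $F(v^2/2)$ using $\operatorname{div}(\rho^{-Q}F\psi(\cdot))$, and (ii) the Rellich identity \eqref{re} with $G = \rho^{-Q+2}(-\log\rho)F$ for the piece $\int \rho^{-Q+2}(-\log\rho)Fv\,\La v$. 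The error terms from $B = A - I_N$ are then absorbed exactly as you anticipate. In other words, your closing paragraph — pivoting to the Rellich identity with the vector field $F$ satisfying $F\rho = \rho$ and $\operatorname{div}F = Q + O(\rho)$ — is the part that actually matches the paper; your opening paragraphs describe the Regbaoui-style commutator route that the paper deliberately avoids because the polar decomposition of the frozen operator is not available here. If you commit to the Rellich route from the start (and drop the $S/\mathcal{A}$ machinery and the polynomial reweighting), you recover essentially the paper's argument.

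One further point worth registering explicitly: the paper's lower bound requires, after the Rellich step, a negative term $-\tfrac{5}{2}\beta\int\rho^{-Q+2}\langle AXv,Xv\rangle$ that is then converted back to the $u$-variable via the identity $\int\rho^{-Q+2}e^{\beta(\log\rho)^2}\langle AXu,Xu\rangle \geq \int\rho^{-Q+2}\langle AXv,Xv\rangle + \beta^2\int\rho^{-Q}(\log\rho)^2 v^2\mu$, and then reabsorbed by the separate interpolation step that produces the gradient term in \eqref{har1}. Your sketch gestures at this ``interpolation bookkeeping'' but does not single out this specific cancellation, which is the one place where the coefficient $\tfrac{5}{2}\beta$ versus $\tfrac{13}{2}\beta^3$ bookkeeping actually matters.
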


As a corollary of the estimate \eqref{har1}, we obtain the following   unique continuation result  for a class of   scaling critical zero order  perturbations of $\La$ ( as in \eqref{hj} below)  by  an obvious modification of the arguments in \cite{BM}.

\begin{thrm}\label{hardy1}
Let $u \in S^{2,2}(B_1)$ with $|u| \leq 1$  be a solution to 
\begin{equation}\label{hj}
\La u = Vu,
\end{equation}
where the potential $V$ satisfies the following Hardy type  growth assumption, 
\begin{equation}\label{V1}
|V(z,t)| \leq C \frac{\psi}{\rho^2}.
\end{equation}
Assume that $u$ vanishes at $(0,0)$ in the following sense,
\begin{equation}\label{vp1}
\int_{B_r} u^2 \psi = O(e^{-k(\log r)^2}),\ \text{for all $k>0$.}
\end{equation}
Then $u \equiv 0$.
\end{thrm}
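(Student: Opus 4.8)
The plan is to derive Theorem~\ref{hardy1} from the scaling-critical Carleman estimate \eqref{har1} by the standard localization argument, following the scheme of \cite{BM} with the minor changes forced by the variable coefficients of $\La$ and by the critical growth \eqref{V1}. Fix $R=R_0$ small enough that \eqref{har1} holds for all large $\beta$, and fix any $r_\ast\in(0,R/4)$. For $0<r<r_\ast/2$ choose a cutoff $\theta_r=\theta_r(\rho)$ equal to $1$ on $\{2r<\rho<R/2\}$, supported in $\{r<\rho<R\}$, with $|\theta_r'|\le C\rho^{-1}$, $|\theta_r''|\le C\rho^{-2}$ on the inner shell $\{r<\rho<2r\}$ and $|\theta_r'|+|\theta_r''|\le C$ on the outer shell $\{R/2<\rho<R\}$. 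Since $u\in S^{2,2}(B_1)$, the function $v:=\theta_r u$ lies in $S^{2,2}_{0}(B_R\setminus\{0\})$, so \eqref{har1} applies to it; from $\La u=Vu$,
\[
\La v=\theta_r Vu+2\langle AX\theta_r,Xu\rangle+(\La\theta_r)u,
\]
and the right-hand side of \eqref{har1} splits into a potential term plus commutator terms supported where $X\theta_r\neq0$.

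For the potential term, \eqref{V1} and \eqref{musi} give $\rho^{-Q+4}e^{\beta(\log\rho)^2}\theta_r^2V^2u^2\mu^{-1}\le C\lambda^{-1}\rho^{-Q}e^{\beta(\log\rho)^2}\theta_r^2\psi u^2$, which, since $\mu\ge\lambda\psi$, is at most $C\lambda^{-2}\beta^{-3}$ times the term $\beta^3\int\rho^{-Q}e^{\beta(\log\rho)^2}v^2\mu$ on the left of \eqref{har1}; it is therefore absorbed once $\beta\ge\beta_0$ for a universal $\beta_0$. This is exactly where the cubic gain in $\beta$ is used --- it is what accommodates the critical Hardy growth \eqref{V1}. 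For the commutator terms note that, $\theta_r$ being radial, $X\theta_r$ carries a factor $|X\rho|=\psi^{1/2}$, and $\La\theta_r=\theta_r''\mu+\theta_r'\La\rho$ with $|\La\rho|\le C\psi/\rho$ (by \eqref{ii}, \eqref{musi} and Remark~\ref{rmk3.7}); hence in $(\La v)^2\mu^{-1}$ the factor $\mu^{-1}$ is compensated, and a first-order Caccioppoli inequality for $\La u=Vu$ --- which controls $\int_{\rho\sim\delta}|Xu|^2$ by $C\delta^{-2}\int_{\rho\sim\delta}\psi u^2$ --- yields an \emph{outer} contribution $\le C(R)e^{\beta(\log(R/2))^2}$ with $C(R)$ independent of $\beta$ (here we also use $|u|\le1$ on $B_{2R}$), and an \emph{inner} contribution $\le C\,r^{-Q}e^{\beta(\log r)^2}\int_{B_{4r}}\psi u^2$. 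Feeding the vanishing hypothesis \eqref{vp1} with $k:=\beta+Q+1$ into the latter gives $\le C_\beta\,r^{-Q}e^{(\beta-k)(\log r)^2}\to0$ as $r\to0$, since the quadratic $(\beta-k)(\log r)^2\to-\infty$ dominates $-Q\log r$.

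Finally, on $A':=\{r_\ast<\rho<R/2\}$ we have $v\equiv u$ (since $r<r_\ast/2$), $\rho^{-Q}e^{\beta(\log\rho)^2}\ge(R/2)^{-Q}e^{\beta(\log(R/2))^2}$ and $\mu\ge\lambda\psi$, so the left side of \eqref{har1} is $\ge\lambda(R/2)^{-Q}\big(\int_{A'}\psi u^2\big)\beta^3e^{\beta(\log(R/2))^2}$. Combining, for every $\beta\ge\beta_0$ and all small $r$,
\[
\tfrac12\lambda(R/2)^{-Q}\Big(\int_{A'}\psi u^2\Big)\beta^3e^{\beta(\log(R/2))^2}\le C(R)e^{\beta(\log(R/2))^2}+o_r(1).
\]
Letting $r\to0$ and then $\beta\to\infty$ forces $\int_{A'}\psi u^2=0$; since $r_\ast\in(0,R/4)$ was arbitrary, $u\equiv0$ on $B_{R/2}$, and the weak unique continuation property of \cite{AKS} (valid where $\La$ is uniformly elliptic with Lipschitz coefficients, i.e.\ off the characteristic manifold $\{z=0\}$), together with connectedness, propagates this to $u\equiv0$ on $B_1$.

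The main obstacle is the first-order Caccioppoli estimate invoked for the commutator terms: controlling $\int|Xu|^2$ over a shell by $\int\psi u^2$ over a slightly larger shell, with the correct power of the shell radius, for the degenerate $\La$ and in the presence of the weight $\psi$ that vanishes along $\{z=0\}$. This is routine in the uniformly elliptic case but here rests on the structural bounds collected in Theorem~\ref{Est1}; the rest is bookkeeping with the weight $\rho^{-Q}e^{\beta(\log\rho)^2}$, i.e.\ ``an obvious modification of the arguments in \cite{BM}''.
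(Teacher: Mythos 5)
Your proposal is correct and follows the standard localization scheme that the paper implicitly invokes (the paper outsources this step to ``an obvious modification of the arguments in \cite{BM}''): apply \eqref{har1} to $v=\theta_r u$, absorb the Hardy potential using the cubic-in-$\beta$ gain, control the inner commutator term via the quadratic-exponential vanishing \eqref{vp1}, bound the outer term by a $\beta$-independent constant times $e^{\beta(\log(R/2))^2}$, and let $r\to0$, then $\beta\to\infty$. The Caccioppoli inequality you flag as the main technical point is indeed the only ingredient not already recorded in the paper, and it goes through exactly as you indicate (test $\La u=Vu$ against $\eta^2u$, use $|X\eta|\le C\delta^{-1}\psi^{1/2}$, $\mu\sim\psi$, and $|V|\le C\psi/\rho^2\le C\delta^{-2}\psi$ on $\rho\sim\delta$), so the argument is complete.
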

We note that Theorem \ref{hardy1}   can be regarded as a slight improvement of  Theorem 4.4 in \cite{G} where it is instead assumed  that $V= V^+ - V^{-}$ with
\[
0 \leq V^{+}(z,t) \leq C \frac{\psi}{\rho^2}\ \text{and}\ 0 \leq V^{-}(z,t) \leq \delta \frac{\psi}{\rho^2}
\]
where $\delta$ is  sufficiently small. Theorem \ref{hardy1} thus  gets rid of such a smallness assumption.   It is to be noted that Theorem \ref{hardy1} gives a unique continuation result for \eqref{hj} which is somewhat  weaker than strong unique continuation because the notion of vanishing in \eqref{vp1} is stronger than the notion of vanishing to infinite order  as in Definition \ref{v0}. As previously mentioned in the introduction, the strong unique continuation result for the case of the Hardy type potentials has only been proven  when $\gamma=k=1$  in \cite{BM}. The proof of the main Carleman estimate in \cite{BM} however relies on some  delicate  projection operator estimates established previously in \cite{GarShen}. Such estimates  are not available  for the general Baouendi-Grushin operators and as of now,   the strong unique continuation property for Hardy type potentials in this degenerate setting remains an open question. 

\section{Proof of the Main results}\label{mn}

\subsection*{Proof of Theorem \ref{main}}
The proof of Theorem \ref{main} is a consequence of the following Carleman estimate following which one can repeat the arguments as in the proof of Theorem 1.3 in \cite{BGM}.  In the Euclidean case, such a Carleman estimate was first established by Escauriaza and Vessella in \cite{EV}. 

\begin{thrm} \label{thm2}
	For every $\ve \in (0,1)$, there exists $C>0$ sufficiently large and $R_0>0$  sufficiently small  universal depending also on $\ve$  such that for every $\alpha >0$ sufficiently large( depending also on $\ve$) and $u \in S^{2,2}_{0}(B_R \setminus \{0\})$ with  $\text{supp} \, u \subset (B_R \setminus \{0\})$ for $R \leq R_0$, one has
	\begin{align}\label{est1}
	&\alpha^3 \int  \rho^{-2\alpha-4+\epsilon} e^{2\alpha \rho^{\epsilon}}u^2 \mu\  + \alpha \int  \rho^{-2\alpha-2+\epsilon} e^{2\alpha  \rho^{\epsilon}} \langle AXu, Xu \rangle  \\
	&\leq C\, \int \rho^{-2\alpha} e^{2\alpha  \rho^{\epsilon}} (\La u)^2 \mu^{-1}. \notag
	\end{align}
	
\end{thrm}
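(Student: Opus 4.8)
The plan is to prove the Carleman estimate \eqref{est1} by the standard conjugation-and-Rellich strategy, but carefully tracking the error terms produced by the Lipschitz perturbation $B = A - I_N$. First I would set $w = \rho^{-\alpha} e^{\alpha\rho^\ve} u$, or equivalently work with the weight $\sigma(\rho) = -\alpha\log\rho + \alpha\rho^\ve$ and the conjugated operator $\mathcal{L}_\sigma v = e^{\sigma}\mathcal{L}(e^{-\sigma}v)$. The natural substitution here is actually the logarithmic change of variables that turns the gauge into an exponential scale: writing things in terms of $s = \log\rho$ (so that $F$, which satisfies $F\rho = \rho$, becomes $\partial_s$ up to lower-order terms by Theorem \ref{Est1}(v)), the problem becomes a Carleman estimate with a weight that is convex in $s$. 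The point of the $\rho^\ve$ correction is precisely to provide the strict convexity (a gap of order $\alpha\ve$ in the relevant pencil) that one needs since the pure power weight $\rho^{-\alpha}$ is only borderline.

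Next I would split $\mathcal{L} = \Ba + \operatorname{div}(B X\cdot)$ and compute the conjugated operator, separating it into a "main part" coming from $\Ba$ and "perturbation parts" coming from $B$. For the main part I would reproduce the computation from \cite{BGM}: decompose $\mathcal{L}_\sigma$ into its (formally) symmetric and antisymmetric pieces $\mathcal{S}$ and $\mathcal{A}$, expand $\|\mathcal{L}_\sigma v\|^2 = \|\mathcal{S}v\|^2 + \|\mathcal{A}v\|^2 + ([\mathcal{S},\mathcal{A}]v,v)$, and show that the commutator term, after integrating by parts using the identities \eqref{ii}, \eqref{h10}, \eqref{divZ} and the fact that $Z\psi = 0$, produces the positive quantities $\alpha^3\int\rho^{-2\alpha-4+\ve}e^{2\alpha\rho^\ve}u^2\mu$ and $\alpha\int\rho^{-2\alpha-2+\ve}e^{2\alpha\rho^\ve}\langle AXu,Xu\rangle$ on the left (the weight $\mu = \langle AX\rho, X\rho\rangle$ naturally replacing $\psi$ because we work with the $A$-gradient). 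The Rellich-type identity here is the one controlling $\langle FAXv, Xv\rangle$ and the boundary-free integration by parts of $F$-derivatives against $|Xv|^2$, where Theorem \ref{Est1}(i),(vi),(vii) give the needed control of $\operatorname{div}F$, $\langle FAXv,Xv\rangle$ and the commutators $[X_i, F]$.

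The heart of the matter, and the step I expect to be the main obstacle, is controlling the error terms generated by $B$. These come in three flavors: (a) the direct contribution of $\operatorname{div}(BX v)$ to the conjugated operator, which contributes terms like $\sigma$, $X_k b_{ij}$, $b_{ij}X_iX_j\rho$, and $b_{ij}(X_i\rho)(X_j\sigma)$ against the weight; (b) the mismatch between $F$ and $Z$, and between $\mu$ and $\psi$, when commuting the weight through; and (c) third-order terms such as $F(b_{ij}X_iX_j\rho)$ which is exactly why Lemma \ref{lma3.3} is needed. Each of these must be shown to be of the form $C\rho$ (or $C\rho^{1/2}$, etc.) times one of the two good terms, using Theorem \ref{Est1}(ii),(iii),(viii),(ix),(xi),(xii),(xiii), Remark \ref{rmk3.7}, Lemma \ref{lma3.10}, and Lemma \ref{lma3.3}. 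Since they come with at most a factor $\alpha^2$ (not $\alpha^3$) and an extra power of $\rho$, choosing $R_0$ small absorbs the lower-order ones directly; the genuinely $\alpha^3$- or $\alpha$-order error terms must instead be absorbed by a Cauchy–Schwarz / Young interpolation that trades a small constant times the top-order left-hand side against a large constant times an intermediate term, which is then itself controlled by interpolating between the two good terms (this is the "delicate interpolation type argument" referred to in the introduction). I would organize this as: (i) prove a clean version of the estimate for $\Ba$ with the $\rho^\ve$ weight (essentially \cite{BGM}); (ii) write $\mathcal{L}u = \Ba u + \operatorname{div}(BXu)$ and bound $\int\rho^{-2\alpha}e^{2\alpha\rho^\ve}(\operatorname{div}(BXu))^2\mu^{-1}$ by, after integrating by parts once, a combination of $\int\rho^{-2\alpha}\psi^{?}|Xu|^2$-type and $\int\rho^{-2\alpha}|X^2u|^2$-type integrals with favorable powers of $\rho$; (iii) absorb the $|Xu|^2$ part into the second good term on the left after multiplying by $R_0$ small, and control the $|X^2u|^2$ part by a Caccioppoli-type second-order estimate (again using $S^{2,2}_0$ and integration by parts) back in terms of the left-hand side and $\int\rho^{-2\alpha}e^{2\alpha\rho^\ve}(\mathcal{L}u)^2\mu^{-1}$; (iv) fix $\alpha$ large and $R_0$ small to close. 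The bookkeeping of exponents of $\psi$ versus $\mu$ and the repeated use of $\lambda\psi \le \mu \le \lambda^{-1}\psi$ is where one must be most careful, together with justifying that all boundary terms vanish because $\operatorname{supp}u \subset B_R\setminus\{0\}$.
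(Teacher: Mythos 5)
Your high-level picture matches the paper in several places: the conjugation $v=\rho^{-\beta}e^{\alpha\rho^{\ve}}u$, the Rellich identity for the cross term, the role of Lemma~\ref{lma3.3} for third derivatives of $\rho$, and the interpolation step to bring the gradient term to the left. But the concrete plan in your items (i)--(iv) --- prove the estimate for $\Ba$ first, then bound $\int\rho^{-2\alpha}e^{2\alpha\rho^{\ve}}\bigl(\operatorname{div}(BXu)\bigr)^2\mu^{-1}$ perturbatively and absorb via a weighted second-order Caccioppoli estimate --- is a different decomposition than the paper's, and it does not close. The obstruction is quantitative. Any Caccioppoli-type bound of $\int\rho^{-2\alpha}e^{2\alpha\rho^{\ve}}|X^2u|^2$ by $\int\rho^{-2\alpha}e^{2\alpha\rho^{\ve}}(\La u)^2\mu^{-1}$ necessarily costs a term of order $\alpha^2\rho^{-2}\int\rho^{-2\alpha}e^{2\alpha\rho^{\ve}}|Xu|^2$ from differentiating the weight, so after reinserting into the $b_{ij}X_iX_ju$ piece of $\operatorname{div}(BXu)$ (whose coefficient gains only one power of $\rho$ from \eqref{H}) you are left with roughly $\alpha^2\int\rho^{-2\alpha}e^{2\alpha\rho^{\ve}}|Xu|^2$, while the gradient term you have on the left is only $\alpha\int\rho^{-2\alpha-2+\ve}e^{2\alpha\rho^{\ve}}\langle AXu,Xu\rangle$. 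Absorption would therefore require $\alpha\rho^{2-\ve}\lesssim 1$ uniformly on the support of $u$, i.e.\ $\alpha R_0^{2-\ve}\lesssim 1$ --- but Theorem~\ref{thm2} must hold for \emph{all} $\alpha$ sufficiently large with $R_0$ fixed, so this fails.

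The paper never splits $\La$ in this additive way; no second derivatives of $u$ and no Caccioppoli inequality appear. The substitution $v=\rho^{-\beta}e^{\alpha\rho^{\ve}}u$, with $\beta=(2\alpha+4-Q)/2$ (so that $2\beta-2\alpha-4=-Q$, not $\beta=\alpha$ as you propose), is applied to the full $\La u$, and in the expansion \eqref{b1} the cross-factor $a=2\beta\mu\,Fv\,\rho^{\beta-2}$ already carries the full $A$ through $\mu=\langle AX\rho,X\rho\rangle$ and $F=(\rho/\mu)\,a_{ij}X_i\rho X_j$. The $B$-pieces enter only as explicit zero-order coefficients of $v$ (terms in $X_ib_{ij}X_j\rho$, $b_{ij}X_iX_j\rho$, $b_{ij}X_i\rho X_j\rho$), each controlled pointwise by Theorem~\ref{Est1}, Remark~\ref{rmk3.7}, Lemma~\ref{lma3.10}, and --- after one more $F$-derivative from integrating against $F(v^2/2)$ --- Lemma~\ref{lma3.3}. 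Every such error integral gains a factor $\rho$ and loses at most one power of $\beta$ against the main term $c\beta^3\ve^2\int\rho^{-Q+\ve}v^2\psi$, so smallness of $R_0$ alone absorbs them. The cross term $\int 4\beta\rho^{-Q+2}Fv\,\La v$ is handled by the Rellich identity \eqref{re} with $G=\rho^{-Q+2}F$, again with the full $\La$, not with $\Ba$. Also, the paper does not form symmetric/antisymmetric parts $\mathcal S,\mathcal A$ and compute $[\mathcal S,\mathcal A]$; it simply uses $(a+b)^2\geq a^2+2ab$ with the $Fv$-factor as $a$, which extracts the same positivity more directly and avoids having to symmetrize the $B$-dependent coefficients.
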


\begin{proof}
First by a limiting type argument, it suffices to establish the estimate when $u$ is smooth. Let  $v=\rho^{-\beta} e^{\alpha \rho^{\ve}}u$.   With such choice we have  $$u= \rho^{\beta} \, e^{-\alpha \rho^{\epsilon}} \, v,$$ where  $\beta$  is to be determined later ($\beta$ would finally depend on $\alpha$ and $Q$!). Then we have  that
\begin{eqnarray*}
\La u&=&X_i(a_{ij} X_ju)=X_i(a_{ij} X_j(\rho^{\beta} \, e^{-\alpha \rho^{\epsilon}} v))=X_i\left(a_{ij} \left[(\rho^{\beta} \, e^{-\alpha \rho^{\epsilon}}) \, X_jv+X_j(\rho^{\beta} \, e^{-\alpha \rho^{\epsilon}}) v\right]\right)\\
&&=X_i\left(a_{ij} (\rho^{\beta} \, e^{-\alpha \rho^{\epsilon}}) \, X_jv\right)+X_i\left(a_{ij}X_j(\rho^{\beta} \, e^{-\alpha \rho^{\epsilon}}) v\right)\\
&&=\La v (\rho^{\beta} \, e^{-\alpha \rho^{\epsilon}})+2 a_{ij} \, X_jv X_i(\rho^{\beta} \, e^{-\alpha \rho^{\epsilon}})+X_i(a_{ij}X_j(\rho^{\beta} \, e^{-\alpha \rho^{\epsilon}})) v\\
&&=\La v (\rho^{\beta} \, e^{-\alpha \rho^{\epsilon}})+2 a_{ij} \, X_jv X_i(\rho^{\beta} \, e^{-\alpha \rho^{\epsilon}})+\La (\rho^{\beta} \, e^{-\alpha \rho^{\epsilon}}) v
\end{eqnarray*}

Now using $$\La =\Ba+X_i(b_{ij} X_j),$$ we note that the above expression can be further rewritten as
\begin{eqnarray*}
	\La u=\La v (\rho^{\beta} \, e^{-\alpha \rho^{\epsilon}})+2 a_{ij} \, X_jv X_i(\rho^{\beta} \, e^{-\alpha \rho^{\epsilon}})+\Ba(\rho^{\beta} \, e^{-\alpha \rho^{\epsilon}}) v+ X_i(b_{ij} X_j \rho^{\beta} \, e^{-\alpha \rho^{\epsilon}}) v.
\end{eqnarray*}

Now we  calculate last two terms of the right hand side of the above equation. As in \cite{BGM}, we have 
\begin{align*}
\Ba(\rho^{\beta} \ e^{-\alpha \rho^{\ve}})= \left(\alpha^2 \ve^2 \rho^{\beta+2\ve-2} + \beta(\beta+Q-2)\rho^{\beta-2}- \alpha\ve\left((2\beta+\ve+ Q-2)\right) \rho^{\beta+\ve-2}\right) e^{-\alpha \rho^{\ve}} \psi,   
\notag
\end{align*}
and similarly we have 
\begin{eqnarray*}
X_i(\rho^{\beta} \, e^{-\alpha \rho^{\epsilon}})=\rho^{\beta-1} \, e^{-\alpha \rho^{\epsilon}}(\beta-\alpha \epsilon \rho^{\epsilon}) X_i\rho,
\end{eqnarray*}
and also
\begin{eqnarray*}
&&	X_iX_j(\rho^{\beta} \, e^{-\alpha \rho^{\epsilon}})=X_i\left[\rho^{\beta-1} \, e^{-\alpha \rho^{\epsilon}}(\beta-\alpha \epsilon \rho^{\epsilon}) X_j\rho\right]\\
&&	=\rho^{\beta-2} \, e^{-\alpha \rho^{\epsilon}}\left(\beta(\beta-1)-\alpha \epsilon (2\beta +\epsilon-1)+(\alpha \epsilon)^2 \rho^{2\epsilon}\right) X_i\rho X_j \rho+\rho^{\beta-1} \, e^{-\alpha \rho^{\epsilon}}(\beta-\alpha \epsilon \rho^{\epsilon}) X_i X_j\rho.
\end{eqnarray*} 
Thus it follows, 
\begin{eqnarray*}
	&&\La u=	\La v (\rho^{\beta} \, e^{-\alpha \rho^{\epsilon}})+2 a_{ij} \, X_jv X_i\rho \left[\rho^{\beta-1} \, e^{-\alpha \rho^{\epsilon}}(\beta-\alpha \epsilon \rho^{\epsilon})\right]\\
	&&+\left[ \left(\alpha^2 \ve^2 \rho^{\beta+2\ve-2} + \beta(\beta+Q-2)\rho^{\beta-2}- \alpha\ve\left((2\beta+\ve+ Q-2)\right) \rho^{\beta+\ve-2}\right) e^{-\alpha \rho^{\ve}} \psi\right] v\\
	&&+\left[X_ib_{ij} \cdot (\beta -\alpha \epsilon \rho^{\epsilon}) \rho^{\beta-1} e^{-\alpha \rho^{\epsilon}} X_j \rho\right] v+ \left[b_{ij}\left(\rho^{\beta-1} \, e^{-\alpha \rho^{\epsilon}}(\beta-\alpha \epsilon \rho^{\epsilon})\right)X_i X_j \rho\right]	v \\
&&+\left[b_{ij}[\rho^{\beta-2} e^{-\alpha \rho^{\epsilon}}(\beta(\beta-1)-\alpha \epsilon \rho^{\epsilon}(2\beta+\epsilon-1)+(\alpha \epsilon)^2 \rho^{2\epsilon})]X_i \rho X_j \rho\right] v.
\end{eqnarray*}

Now in terms of $F$ and $\mu$, we note that $\La u$ can be written in the following way, 
\begin{eqnarray}\label{b1}
&&\La u=	\La v (\rho^{\beta} \, e^{-\alpha \rho^{\epsilon}})+2 \mu Fv \left[\rho^{\beta-2} \, e^{-\alpha \rho^{\epsilon}}(\beta-\alpha \epsilon \rho^{\epsilon})\right]\\
&&+\left[ \left(\alpha^2 \ve^2 \rho^{\beta+2\ve-2} + \beta(\beta+Q-2)\rho^{\beta-2}- \alpha\ve\left((2\beta+\ve+ Q-2)\right) \rho^{\beta+\ve-2}\right) e^{-\alpha \rho^{\ve}} \psi\right] v \nonumber\\
&&+\left[X_ib_{ij} \cdot (\beta -\alpha \epsilon \rho^{\epsilon}) \rho^{\beta-1} e^{-\alpha \rho^{\epsilon}} X_j \rho\right] v+\left[a_{ij}\left(\rho^{\beta-1} \, e^{-\alpha \rho^{\epsilon}}(\beta-\alpha \epsilon \rho^{\epsilon})\right)X_i X_j \rho\right]	v \nonumber\\
&&+\left[b_{ij}[\rho^{\beta-2} e^{-\alpha \rho^{\epsilon}}(\beta(\beta-1)-\alpha \epsilon \rho^{\epsilon}(2\beta+\epsilon-1)+(\alpha \epsilon)^2 \rho^{2\epsilon})]X_i \rho X_j \rho\right] v. \nonumber 
\end{eqnarray}

Before proceeding further, we make the following discursive remark.

\begin{rmrk}
From now on  unless otherwise specified, we will be  following the Einstein notation for summation over repeated indices.
\end{rmrk}

Now using  $(a+b)^2\geq a^2+2ab$ with $a= 2 \beta \, \mu Fv \rho^{\beta-2} $ and with  $b$ being the  rest of the terms in \eqref{b1}  above,  we  obtain

\begin{align}\label{k}
&e^{2\alpha \rho^{\epsilon}}(\La u)^2 \geq 4\beta^2  \rho^{2\beta-4} \mu^2 (Fv)^2+4\beta \mu \rho^{2\beta-2}  \, Fv \La v -8 \alpha \beta \, \epsilon \mu^2 (Fv)^2 \rho^{2\beta-4+\epsilon}\\
&+4 \beta \, \mu  \rho^{\beta-2}\bigg[ \left[\left(\alpha^2 \ve^2 \rho^{\beta+2\ve-2} + \beta(\beta+Q-2)\rho^{\beta-2}- \alpha\ve\left((2\beta+\ve+ Q-2)\right) \rho^{\beta+\ve-2}\right)  \psi \right]\notag \\
&+\left[X_ib_{ij} \cdot (\beta -\alpha \epsilon \rho^{\epsilon}) \rho^{\beta-1} X_j \rho\right]+\left[b_{ij}\left(\rho^{\beta-1} \, (\beta-\alpha \epsilon \rho^{\epsilon})\right)X_i X_j \rho\right]\notag \\
&+\left[b_{ij}[\rho^{\beta-2} (\beta(\beta-1)-\alpha \epsilon \rho^{\epsilon}(2\beta+\epsilon-1)+(\alpha \epsilon)^2 \rho^{2\epsilon})]X_i \rho X_j \rho\right] \bigg] Fv \cdot	v \nonumber.
\end{align}
Hence from \eqref{k} we have, 
\begin{align}\label{rt1}
&\int \rho^{-2\alpha} e^{2\alpha \rho^{\epsilon}}(\La u)^2  \mu^{-1} \\
& \geq \int [4\beta^2  \rho^{-2\alpha+2\beta-4} \mu (Fv)^2 -8 \alpha \beta \, \epsilon\rho^{-2\alpha+2\beta-4+\epsilon} \mu (Fv)^2 ] \,   +\int 4\beta \rho^{-2\alpha+2\beta-2}  \, Fv \La v \notag\\
&
+ \int 4 \beta \,  \rho^{-2\alpha+\beta-2}\bigg[\left[\left(\alpha^2 \ve^2 \rho^{\beta+2\ve-2} + \beta(\beta+Q-2)\rho^{\beta-2}- \alpha\ve\left((2\beta+\ve+ Q-2)\right) \rho^{\beta+\ve-2}\right)  \psi \right] \notag\\
&+\left[X_ib_{ij} \cdot (\beta -\alpha \epsilon \rho^{\epsilon}) \rho^{\beta-1} X_j \rho\right]+\left[a_{ij}\left(\rho^{\beta-1} \, (\beta-\alpha \epsilon \rho^{\epsilon})\right)X_i X_j \rho\right] \notag\\
&+\left[b_{ij}[\rho^{\beta-2} (\beta(\beta-1)-\alpha \epsilon \rho^{\epsilon}(2\beta+\epsilon-1)+(\alpha \epsilon)^2 \rho^{2\epsilon})]X_i \rho X_j \rho\right]\bigg] Fv \cdot	v   \notag\\
&= \int [4\beta^2  \rho^{-2\alpha+2\beta-4} \mu (Fv)^2 -8 \alpha \beta \, \epsilon \mu (Fv)^2 \rho^{-2\alpha+2\beta-4+\epsilon}] \,  +\int 4\beta \rho^{-2\alpha+2\beta-2}  \, Fv \La v \,  \notag\\
&+ \int 4 \beta \,  \rho^{-2\alpha+2\beta-3} \left[X_ib_{ij} \cdot (\beta -\alpha \epsilon \rho^{\epsilon}) X_j \rho\right] Fv\cdot v,\notag\\
&
+ \int 4 \beta \,  \rho^{-2\alpha+\beta-2}\bigg[\left[\left(\alpha^2 \ve^2 \rho^{\beta+2\ve-2} + \beta(\beta+Q-2)\rho^{\beta-2}- \alpha\ve\left(2\beta+\ve+ Q-2t\right) \rho^{\beta+\ve-2}\right)  \psi \right]\notag\\
&+\left[b_{ij}[\rho^{\beta-2} (\beta(\beta-1)-\alpha \epsilon \rho^{\epsilon}(2\beta+\epsilon-1)+(\alpha \epsilon)^2 \rho^{2\epsilon})]X_i \rho X_j \rho\right] +\left[b_{ij}\left(\rho^{\beta-1} \, (\beta-\alpha \epsilon \rho^{\epsilon})\right)X_i X_j \rho\right]\bigg] F\left(\frac{v^2}{2}\right)\notag
\end{align}
The following integral in \eqref{rt1} above, i.e. 
\begin{align}\label{rta1}
&
 \int 4 \beta \,  \rho^{-2\alpha+\beta-2}\bigg[\left[\left(\alpha^2 \ve^2 \rho^{\beta+2\ve-2} + \beta(\beta+Q-2)\rho^{\beta-2}- \alpha\ve\left((2\beta+\ve+ Q-2)\right) \rho^{\beta+\ve-2}\right) \psi \right]\\
&+\left[b_{ij}[\rho^{\beta-2} (\beta(\beta-1)-\alpha \epsilon \rho^{\epsilon}(2\beta+\epsilon-1)+(\alpha \epsilon)^2 \rho^{2\epsilon})]X_i \rho X_j \rho\right] +\left[b_{ij}\left(\rho^{\beta-1} \, (\beta-\alpha \epsilon \rho^{\epsilon})\right)X_i X_j \rho\right]\bigg] F\left(\frac{v^2}{2}\right). \notag
\end{align}
is handled using integration by parts as follows. 
%
%

We  first observe from (i) in Theorem \ref{Est1} and also by using $F\rho=\rho$  that for  every $\gamma \geq 0$,
\begin{equation}\label{i1}
\operatorname{div}(\rho^{-Q+\gamma}F)=\gamma \rho^{-Q+\gamma}+O(1) \rho^{-Q+\gamma+1}.\end{equation}
Now we look at each individual term in  \eqref{rta1}.
We let 
\begin{eqnarray} \label{choice}
\beta = \frac{2\alpha+4 - Q}{2},
\end{eqnarray} which gives $2\beta - 2 \alpha - 4 = - Q$.  With such a choice,   it follows using Theorem \ref{Est1} i) and ii) that the following holds,  
\begin{equation}\label{p1}
2\beta^2  (\beta+Q-2) \int \rho^{2\beta-2\alpha-4} F(v^2) \psi  = 2 \beta^2( \beta+Q-2) \int \operatorname{div} (\rho^{-Q} F\psi) v^2 =  O(1) \beta^2(\beta+Q-2)  \int \rho^{-Q+1} v^2 \psi.
\end{equation}
Similarly we have, 
\begin{equation}\label{p2}
- 2 \beta \alpha \ve( 2 \beta +\ve + Q-2) \int \rho^{2\beta-2\alpha-4+\ve} F(v^2) \psi = 2 \beta \alpha \ve^2 (2 \beta + \ve+ Q-2) \left(  \int \rho^{-Q+\ve} v^2 \psi +O(1) \int \rho^{-Q+\ve+1} v^2 \psi\right),  
\end{equation}
and
\begin{equation}\label{p4}
2 \beta (\alpha \ve)^2 \int \rho^{2\beta-2\alpha-4+2\ve} F(v^2) \psi = -2 \beta \alpha^2 \ve^3  \left (\int \rho^{-Q+2\ve} v^2 \psi+O(1) \int \rho^{-Q+2\ve+1} v^2 \psi\right).  
\end{equation}
Since $\ve <1$, thus from \eqref{p1}-\eqref{p4} we deduce the following estimate for all  $R$ small enough depending also on $\ve$, 
\begin{align}\label{p5}
&\int 4 \beta   \rho^{-2\alpha + \beta-2}\bigg[ \left[\left(\alpha^2 \ve^2 \rho^{\beta+2\ve-2} + \beta(\beta+Q-2)\rho^{\beta-2}- \alpha\ve\left(2\beta+\ve+ Q-2\right) \rho^{\beta+\ve-2}\right)  \psi \right]Fv. v\\
& \geq  c \beta^3 \ve^2 \int \rho^{-Q+\ve} v^2 \psi, \notag
\end{align}
for some universal $c>0$.

Again by  integrating by parts and by  using $|F (b_{ij} X_i \rho X_j \rho)| \leq C\rho \psi$ and the Hypothesis \eqref{H}, we note    that the following holds, 
\begin{eqnarray*}
&&\int 4 \beta \rho^{-2\alpha+\beta -2} \left[b_{ij}[\rho^{\beta-2} (\beta(\beta-1)-\alpha \epsilon \rho^{\epsilon}(2\beta+\epsilon-1)+(\alpha \epsilon)^2 \rho^{2\epsilon})]X_i \rho X_j \rho\right]  F\left(\frac{v^2}{2}\right) \\
	&&= -\int \operatorname{div} \bigg(4\beta \rho^{-2\alpha+\beta-2} \left[b_{ij}[\rho^{\beta-2} (\beta(\beta-1)-\alpha \epsilon \rho^{\epsilon}(2\beta+\epsilon-1)+(\alpha \epsilon)^2 \rho^{2\epsilon}) ]X_i \rho X_j \rho\right]F\bigg) \frac{v^2}{2}\\
	&& \geq - C \beta^3 \int \rho^{-Q+1} v^2 \psi,
\end{eqnarray*}

for some universal $C$.  Likewise, we have that 
\begin{align}\label{p7}
&\int  4 \beta \rho^{-2\alpha+\beta-2} \left[b_{ij}\left(\rho^{\beta-1} \, (\beta-\alpha \epsilon \rho^{\epsilon})\right)X_i X_j \rho\right] F\left(\frac{v^2}{2}\right) 
\\
& = - \int 4 \beta \operatorname{div} ( \rho^{-2\alpha+\beta-2} \left[b_{ij}\left(\rho^{\beta-1} \, (\beta-\alpha \epsilon \rho^{\epsilon})\right)X_i X_j \rho\right] F) \frac{v^2}{2}\notag
\\
& \geq - C\beta^2 \int \rho^{-Q+1} v^2 \psi.
\notag
 \end{align}
 We note that in \eqref{p7} above, we also used the estimate from Lemma \ref{lma3.3} and also that  $|b_{ij} X_i X_j \rho| \leq C \psi$. Thus from \eqref{p5}-\eqref{p7} it follows that
 \begin{align}\label{p8}
& \int 4 \beta \,  \rho^{-2\alpha+\beta-2}\bigg[\left[\left(\alpha^2 \ve^2 \rho^{\beta+2\ve-2} + \beta(\beta+Q-2)\rho^{\beta-2}- \alpha\ve\left((2\beta+\ve+ Q-2)\right) \rho^{\beta+\ve-2}\right) \psi \right]\\
&+\left[b_{ij}[\rho^{\beta-2} (\beta(\beta-1)-\alpha \epsilon \rho^{\epsilon}(2\beta+\epsilon-1)+(\alpha \epsilon)^2 \rho^{2\epsilon})]X_i \rho X_j \rho\right] +\left[b_{ij}\left(\rho^{\beta-1} \, (\beta-\alpha \epsilon \rho^{\epsilon})\right)X_i X_j \rho\right]\bigg] F\left(\frac{v^2}{2}\right) \notag \\ & \geq c \beta^3 \ve^2 \int \rho^{-Q+\ve} v^2 \psi\notag, 
  \end{align}
  provided $R$ is  chosen small enough depending also on $\ve$.

Now using $\sum_{i,j=1}^N|X_i b_{ij} \, X_j \rho| \leq C\mu$,  we obtain by an application of  Cauchy-Schwartz inequality that the following holds
\begin{align}\label{p10}
&\int 4 \beta \,  \rho^{-2\alpha+2\beta-3} \left[X_ib_{ij} \cdot (\beta -\alpha \epsilon \rho^{\epsilon}) X_j \rho\right] Fv\cdot v \,  \\
&\leq \frac{1}{2}\left[\beta^2  \int \rho^{-2\alpha+2\beta-4} \mu (Fv)^2 dzdt + 8 \beta^2 \int \rho^{-2\alpha+2\beta -2} v^2 \mu \right]\notag\\
&=\frac{1}{2}\left[\beta^2 \int \rho^{-Q} \mu (Fv)^2  + 8 \beta^2 \int \rho^{-Q+2} v^2 \mu \right]\notag.
\end{align}
Again by using the Cauchy-Schwartz inequality, we note that if $R$ is chosen sufficiently small depending also on $\ve$, then for all large  enough $\beta$ we can ensure that 
\begin{align}\label{p11}
& \int [4\beta^2  \rho^{-2\alpha+2\beta-4} \mu (Fv)^2 -8 \alpha \beta \, \epsilon\rho^{-2\alpha+2\beta-4+\epsilon} \mu (Fv)^2 ]
\\
& \geq 3 \beta^2 \int \rho^{-Q} (Fv)^2 \mu.
\notag
\end{align}

We finally  estimate the remaining  integral in  \eqref{rt1}, i.e.
\[
\int 4\beta \rho^{-2\alpha+2\beta-2}  \, Fv \La v .\
\]
This is accomplished  using an appropriate Rellich type  identity similar to that for the constant coefficient case considered in \cite{BGM}. However in the present scenario of variable coefficients,  as the reader will see that an error term is incurred after the application of such an identity which is then eventually handled by an interpolation type argument.  We first note that with our choice of  $\alpha$ as in \eqref{choice},  such an integral equals
\[
\int 4\beta \rho^{-Q+2}  \, Fv \La v.
\]
  We now   state  the relevant Rellich type identity( see for instance Lemma 2.11 in \cite{GV}) which will be used:
\begin{align}\label{re}
& \int_{\partial B_R} \langle AXu,Xu\rangle \, \langle G,\nu \rangle  = 2 \int_{\partial B_R} a_{ij}X_i u \langle X_j,\nu \rangle Gu 
\\
& -  2\int_{B_R} a_{ij} (\operatorname{div} X_i) X_ju Gu  - 2 \int_{B_R} a_{ij} X_iu [X_j,G]u
\notag\\
& + \int_{B_R} \operatorname{div} G \langle AXu,Xu \rangle +  \int_{B_R} \langle (GA)Xu,Xu \rangle  - 2 \int_{B_R} Gu  X_i(a_{ij} X_j u),
\notag
\end{align}
where  $G$ is a vector field, $GA$ is the matrix with coefficients $Ga_{ij}$, $ \nu $ denotes  the outer unit normal to $B_r$, and the summation convention over repeated indices has been adopted.
Therefore with $G= \rho^{-Q+2}F,$ we obtain
\begin{align}\label{ok110}
& 4 \beta \int \rho^{-Q+2} Fv \La  v =2 \beta  \int \operatorname{div} [\rho^{-Q+2}F] \langle AXv, Xv \rangle  -4 \beta \int a_{ij} X_i v [X_i, \rho^{-Q+2} F] v
\\
&+2 \beta \int \langle (\rho^{-Q+2}F A) Xv, Xv \rangle \notag\\
& \geq  4\beta \int  \rho^{-Q+2} \langle AXv, Xv \rangle -C \beta \int \rho^{-Q+3} \langle AXv, Xv \rangle -4 \beta \int a_{ij} X_i v [X_i, \rho^{-Q+2} F] v.
\notag
\end{align}
In the  last step we used the fact that $\operatorname{div} [\rho^{-Q+2}F]=2\rho^{-Q+2}+O(1)\rho^{-Q+3}$ and $|Fa_{rs}|\leq C\rho.$
We next note  that 
\begin{eqnarray*}
&&[X_i,G]v = \rho^{-Q+2}[X_i, F]v+X_i[\rho^{-Q+2}]Fv\\
&&= \rho^{-Q+2}[X_i, F]v+(-Q+2) \rho^{-Q+1} X_i \rho \, Fv.
\end{eqnarray*}

This gives
\begin{align*}
a_{ij} X_i v [X_j,G] v & = (-Q+2) \rho^{-Q+1} \langle A X \rho,Xv \rangle Fv + \rho^{-Q+2} a_{ij} X_i v [X_j,F]v
\\
& = (-Q+2) \rho^{-Q} (Fv)^2 \mu   + \rho^{-Q+2} a_{ij} X_i v \left([X_j,F]v - X_j v\right)
\\
& \ \ \ + \rho^{-Q+2} \langle AXv,Xv\rangle,
\end{align*}
where we have used the fact that
\[
\rho \langle AX\rho,Xu\rangle = \mu Fu.
\]
Therefore using the estimate in Theorem \ref{Est1} vii) we obtain that for some universal $C$ the following holds,  

\begin{align}\label{ok120}
& 4 \beta \int \rho^{-Q+2} Fv \La v  \geq 4\beta \int  \rho^{-Q+2} \langle AXv, Xv \rangle \\
& - 4\beta \int  \rho^{-Q+2} \, \langle AXv,Xv\rangle 
 - C \beta   \int  \langle AXv,Xv\rangle \rho^{-Q+3} + 2\beta (Q-2) \int_{B_r} (Fv)^2 \rho^{-Q}  \mu.
\notag
\end{align}
Since $Q \geq 2$, therefore  from \eqref{ok120} we  deduce that the following holds,
\begin{equation} \label{rt3}
 4 \beta \int \rho^{-Q+2} Fv \La v +C \beta  \int \rho^{-Q+3}  \langle AXv, Xv \rangle  \geq 0.
\end{equation}

Therefore, by combining \eqref{p8}, \eqref{p10}, \eqref{p11}  and \eqref{rt3}, we finally obtain that 
\begin{eqnarray}\label{rt400}
&&\int \rho^{-2\alpha} \, e^{2\alpha \rho^{\epsilon}} \, (\La u )^2 \mu^{-1}  \\
&&\geq    3 \beta^2 \int  \rho^{-Q}  (Fv)^2  \mu  - C\beta  \int \rho^{-Q+3}  \langle AXv, Xv \rangle + c\beta^3 \ve^2 \int \rho^{-Q+\epsilon} v^2 \mu \nonumber
\end{eqnarray}
where $C$ and $c$ are universal constants. We now  estimate the term $$C\beta \int \rho^{-Q+3}  \langle AXv, Xv \rangle$$  in \eqref{rt400} using an interpolation type argument.  We first rewrite such an integral in terms of $\int \rho^{-2\alpha -1 } e^{2 \alpha \rho^{\epsilon}} <AXu, AXu>$ as follows, 
\begin{align}
	&\int \rho^{-2\alpha-1}  e^{2\alpha \rho^{\epsilon}} \langle AX u, X u \rangle = \int \rho^{-2\alpha-1}  e^{2\alpha \rho^{\epsilon}}\langle AX (\rho^{\beta}e^{-\alpha \rho^{\epsilon} } v),X (\rho^{\beta}e^{-\alpha r^{\epsilon} } v )\rangle\\
	&= \int \rho^{-2\alpha-1} 
	 e^{2\alpha \rho^{\epsilon}}\langle A \, (\beta-\alpha \epsilon \rho^{\epsilon}) \rho^{\beta-1} e^{-\alpha \rho^{\epsilon}} X\rho \, v+\rho^{\beta} e^{-\alpha \rho^{\epsilon}} A Xv, (\beta-\alpha \epsilon \rho^{\epsilon}) \rho^{\beta-1} e^{-\alpha \rho^{\epsilon}} X\rho \, v+\rho^{\beta} e^{-\alpha \rho^{\epsilon}}  Xv \rangle\notag \\
	&=\int \rho^{-2\alpha-1} e^{2\alpha \rho^{\epsilon}} \bigg[\mu \, \rho^{2\beta-2} e^{-2\alpha \rho^{\epsilon}} (\beta-\alpha \epsilon \rho^{\epsilon})^2  v^2+2 \mu Fv \, v (\rho^{2\beta-2} (\beta-\alpha \epsilon \rho^{\epsilon})e^{-2\alpha \rho^{\epsilon}})
	+ (e^{-2\alpha \rho^{\epsilon}} \rho^{2\beta} ) \langle A Xv, Xv\rangle \bigg]\notag\\
	&=\int \rho^{-Q+3}  \bigg[\frac{\mu \, (\beta-\alpha \epsilon \rho^{\epsilon})^2}{\rho^2} v^2+2 \mu \frac{(\beta-\alpha \epsilon \rho^{\epsilon})}{\rho^2} Fv \, v+  \,   \langle A Xv, Xv\rangle \bigg]\notag\\
	& = \int \rho^{-Q+3}  \bigg[\frac{\mu \,  (\beta-\alpha \epsilon \rho^{\epsilon})^2}{\rho^2} v^2- \mu \big[\rho^{-2}(\beta-\alpha \epsilon \rho^{\epsilon}) \operatorname{div}(F)
	+\rho^{-2}(\beta(-Q+1)+\alpha \epsilon (-Q+1+\epsilon))\big] v^2+  \,   \langle A Xv, Xv\rangle \bigg] \notag \\
	&= \int \rho^{-Q+3}  \bigg[\frac{\mu \,  (\beta-\alpha \epsilon \rho^{\epsilon})^2}{\rho^2} v^2- \mu \big[\rho^{-2}(\beta-\alpha \epsilon \rho^{\epsilon}) (Q+O(\rho))\notag\\
	&+\rho^{-2}(\beta(-Q+1)+\alpha \epsilon (-Q+1+\epsilon))\big] v^2+  \,   \langle A Xv, Xv\rangle \bigg] .\notag
\end{align}
Thus we get  for some universal $C_1$ that 
\begin{equation}\label{l1}
\int \rho^{-2\alpha-1} e^{2\alpha \rho^{\epsilon}}\langle A Xu, Xu\rangle  \geq  \int \rho^{-Q+3} \langle A Xv, Xv\rangle  - C_1 \beta\int \rho^{-Q+1} v^2.\end{equation}
Using \eqref{l1} in \eqref{rt400} we obtain that for all  large enough $\beta$ depending also on $\ve$  we  have, 
\begin{eqnarray}\label{rt41}
&& \hspace{.5in}\int \rho^{-2\alpha} \, e^{2\alpha \rho^{\epsilon}} \, (\La  u)^2 \mu^{-1} + C_1 \beta \int \rho^{-2\alpha-1} e^{2\alpha \rho^{\epsilon}}\langle A Xu, Xu\rangle \\
&&\geq    3 \beta^2 \int  \, \rho^{-Q}  (Fv)^2 \mu  +  c \beta^3 \ve^2  \int \rho^{-2\alpha-4+\epsilon} e^{2\alpha \rho^{\epsilon}}  u^2 \, \mu. \nonumber
\end{eqnarray}

 We now  show how to incorporate  the integral  $C\beta  \int_{B_R} \rho^{-2\alpha-2+\epsilon} e^{2\alpha \rho^{\epsilon}} \langle A Xu, Xu\rangle $ in the left hand side of \eqref{est1} by  interpolation. 

We have  by integration by parts,
\begin{align}\label{rt7}
& \int \rho^{-2\alpha-2+\epsilon} e^{2\alpha \rho^{\epsilon}} \langle A X u, X u \rangle  =\int \rho^{-2\alpha-2+\epsilon} e^{2\alpha \rho^{\epsilon}} \, \langle A X (\rho^{\beta} e^{-\alpha \rho^{\epsilon}} v), X (\rho^{\beta} e^{-\alpha \rho^{\epsilon}} v)\rangle\\
&=-\int <X(\rho^{-2\alpha-2+\epsilon} e^{2\alpha \rho^{\epsilon}}) , A \, X(\rho^{\beta} e^{-\alpha \rho^{\epsilon}} v) > [\rho^{\beta} e^{-\alpha \rho^{\epsilon}} v]-\int \La  (\rho^{\beta} e^{-\alpha \rho^{\epsilon}} v) \, v \, [\rho^{-2\alpha+\beta-2+\epsilon} e^{\alpha \rho^{\epsilon}}].
\notag
\end{align}
Now we look at each individual term on the right hand side of \eqref{rt7}. We have 
\begin{align} \label{ok4}
&-\int \langle X(\rho^{-2\alpha-2+\epsilon} e^{2\alpha \rho^{\epsilon}}) , AX(\rho^{\beta} e^{-\alpha \rho^{\epsilon}} v)\rangle \rho^{\beta} e^{-\alpha \rho^{\epsilon}} v=-\int e^{2\alpha \rho^{\epsilon}} \\
& [\rho^{-2\alpha-3+\epsilon}  \, (-2\alpha-2+\epsilon+2\alpha \epsilon \rho^{\epsilon})] \langle X \rho , A[\rho^{\beta} e^{-\alpha \rho^{\epsilon}} \, Xv+(\beta-\alpha \epsilon \rho^{\epsilon}) \, \rho^{\beta-1} \,e^{-\alpha \rho^{\epsilon}} \, X\rho \, v ] (\rho^{\beta} e^{-\alpha \rho^{\epsilon}} v)\rangle \notag\\
&=-\int \rho^{-2\alpha+2\beta-4+\epsilon} (-2\alpha-2+\epsilon+2\alpha \epsilon \rho^{\epsilon}) [(\beta-\alpha \epsilon \rho^{\epsilon}) \mu  \, v^2+\mu \, Fv \cdot v] \, dzdt
\notag
\\
&=\int \rho^{-Q+\epsilon} (2\alpha+2-\epsilon-2\alpha \epsilon \rho^{\epsilon}) (\beta-\alpha \epsilon \rho^{\epsilon}) \mu  \, v^2 \notag\\
&+ \int \rho^{-Q+\epsilon} (2\alpha+2-\epsilon-2\alpha \epsilon \rho^{\epsilon}) \mu \, Fv\cdot v.
\notag
\end{align}
Let $c_0 =\frac{c}{100}$ where $c$ is as in \eqref{rt41}.  From \eqref{ok4} it follows that
\begin{align}\label{ok10}
& c_0 \beta \ve^2  \int \rho^{-2\alpha-2+\epsilon} e^{2\alpha \rho^{\epsilon}} \langle A X u, X u \rangle \leq  c_0 \beta \ve^2 \int \rho^{-Q+\epsilon} (2\alpha+2-\epsilon-2\alpha \epsilon \rho^{\epsilon}) (\beta-\alpha \epsilon \rho^{\epsilon}) \mu  \, v^2 dzdt \\
& + c_0 \beta \ve^2  \bigg| \int \rho^{-Q+\epsilon} (2\alpha+2-\epsilon-2\alpha \epsilon \rho^{\epsilon}) \mu \, Fv\cdot v \,\bigg| 
+ c_0 \beta \ve^2  \bigg| \int \La  (\rho^{\beta} e^{-\alpha \rho^{\epsilon}} v) \, v \, [\rho^{-2\alpha+\beta-2+\epsilon} e^{\alpha \rho^{\epsilon}}]\bigg|. \notag
\end{align}
Now by  applying Cauchy-Schwartz  inequality to the integrals 
\[
c_0 \beta  \ve^2 \bigg|\int \rho^{-Q+\epsilon} (2\alpha+2-\epsilon-2\alpha \epsilon \rho^{\epsilon}) \mu \, Fv\cdot v \,\bigg| \
\text{and}\ c_0 \beta \ve^2  \bigg| \int \La  (\rho^{\beta} e^{-\alpha \rho^{\epsilon}} v) \, v \, [\rho^{-2\alpha+\beta-2+\epsilon} e^{\alpha \rho^{\epsilon}}]\bigg|,\]
we obtain from \eqref{ok10} that the following inequality holds for all large enough $\beta$,
\begin{align}\label{ok11}
& c_0 \beta \ve^2  \int \rho^{-2\alpha-2+\epsilon} e^{2\alpha \rho^{\epsilon}} \langle A X u, X u \rangle \leq  4 c_0 \beta^3 \ve^2 \int \rho^{-Q+\ve}  v^2 \mu 
\\
&  + 4 c_0 \beta^2  \ve^2 \int \rho^{-Q} (Fv)^2 \mu  + c_0  \ve^2   \int \rho^{-2 \alpha} e^{2\alpha \rho^{\epsilon}} (\La u)^2 \mu^{-1}
\notag
\end{align}
Using  \eqref{rt41} into \eqref{ok11} above, we obtain
\begin{align}\label{o13}
& c_0 \beta \ve^2  \int \rho^{-2\alpha-2+\epsilon} e^{2\alpha \rho^{\epsilon}} \langle A X u, X u \rangle 
\\
& \leq C   \int \rho^{-2 \alpha} e^{2\alpha \rho^{\epsilon}} (\La u)^2 \mu^{-1} + C\beta \int \rho^{-2\alpha -1 }  \langle AXu, Xu \rangle.
\notag 
\end{align}
Now since $\ve < 1$, therefore  if $R$ is chosen small enough depending also on $\ve$, then the following integral in \eqref{o13}, i.e.
\[
C\beta \int \rho^{-2\alpha -1 }  \langle AXu, Xu \rangle,\]
can be absorbed in the left hand side of \eqref{o13} since $\ve<1$ and  we thus  deduce that the following holds for some new $c_0, C$,
\begin{align}\label{y1}
& c_0 \beta \ve^2  \int \rho^{-2\alpha-2+\epsilon} e^{2\alpha \rho^{\epsilon}} \langle A X u, X u \rangle 
\\
& \leq C   \int \rho^{-2 \alpha} e^{2\alpha \rho^{\epsilon}} (\La u)^2 \mu^{-1}.\notag
\end{align}
The estimate \eqref{est1} now follows  by using \eqref{y1} in  \eqref{rt41}.

\end{proof}

\subsection*{Proof of Theorem \ref{DF}}

\begin{proof}
 As before, we let $u= \rho^{\beta} e^{\alpha \rho^{\epsilon}}v$ where $\alpha$ and $\beta$ are related as in \eqref{choice}.  In terms of $v$, we  have that
	\begin{align}\label{f111}
		\La u + Vu  &= \La v (\rho^{\beta} \, e^{-\alpha \rho^{\epsilon}})+2 \mu Fv \left[\rho^{\beta-2} \, e^{-\alpha \rho^{\epsilon}}(\beta-\alpha \epsilon \rho^{\epsilon})\right]+\left[X_ia_{ij} \cdot (\beta -\alpha \epsilon \rho^{\epsilon}) \rho^{\beta-1} e^{-\alpha \rho^{\epsilon}} X_j \rho\right] v\\
		&+\left[a_{ij}[\rho^{\beta-2} e^{-\alpha \rho^{\epsilon}}(\beta(\beta-1)-\alpha \epsilon \rho^{\epsilon}(2\beta+\epsilon-1)+(\alpha \epsilon)^2 \rho^{2\epsilon})]X_i \rho X_j \rho\right] v \nonumber \\
		&+\left[a_{ij}\left(\rho^{\beta-1} \, e^{-\alpha \rho^{\epsilon}}(\beta-\alpha \epsilon \rho^{\epsilon})\right)X_i X_j \rho\right]	v + V \rho^\beta \, e^{-\alpha \rho^{\epsilon}} \, v. \nonumber
	\end{align}
	
	Now again the integral 
	\[
	\int \rho^{-2\alpha} \, e^{2\alpha  \rho^{\epsilon}} \, (\La u + V u)^2 \mu^{-1} 
	\]
	is estimated from below  by using $(a+b)^2 \geq a^2 + 2ab$, with $a= 2\beta \rho^{\beta-2} \mu Fv$ and $b$ being the rest of the terms in \eqref{f111}.  Arguing as in the proof of Theorem \ref{thm2} we obtain,
	
	\begin{eqnarray}\label{rt4}
&& \hspace{.5in}\int \rho^{-2\alpha} \, e^{2\alpha \rho^{\epsilon}} \, (\La  u + Vu)^2 \mu^{-1}  + C_1 \beta \int \rho^{-2\alpha-1} e^{2\alpha \rho^{\epsilon}}\langle A Xu, Xu\rangle \\
&&\geq    3 \beta^2 \int \, \rho^{-Q}  (Fv)^2 \mu  +  c \beta^3 \ve^2  \int \rho^{-2\alpha-4+\epsilon} e^{2\alpha \rho^{\epsilon}}  u^2 \, \mu  + 4 \beta \int \rho^{-2\alpha +2\beta - 2} Fv V v. \nonumber
\end{eqnarray}	
	We note that the additional integral in \eqref{rt4} is incurred   due to  the presence of the additional  term $V \rho^\beta e^{-\alpha \rho^{\epsilon}} v $  in \eqref{f111} ( that is not present in  \eqref{b1} !). Such an integral is estimated as follows. We have
	\begin{align}\label{ad11}
	&	4 \beta \int \rho^{-2\alpha + 2\beta -2}  Fv \, V  \, v=2 \beta \int V \,\rho^{-Q + 2}  F(v^2) \notag\\
	&=-2 \beta \int FV \,\rho^{-Q + 2} \, v^2 +4 \beta \int V \,\rho^{-Q + 2}  v^2. 
	\end{align}
Recall $V$  satisfying the bound
\begin{equation*}
	|V(z,t)| \leq K \psi \text{ and }  	|ZV(z,t)| \leq K \psi.
\end{equation*}
Thus both the integrals in \eqref{ad11} can be controlled by the following term in \eqref{rt4}, i.e.
\[
c \beta^3 \ve^2  \int \rho^{-2\alpha-4+\epsilon} e^{2\alpha \rho^{\epsilon}}  u^2\mu, \]
provided 
\[
\frac{c \beta^3 \ve^2}{2} >    C K \beta \]   which  in turn  can be ensured by choosing 
\begin{equation}\label{ch1}
\alpha \geq  C_1 (K^{1/2} +1)\end{equation}  ( in view of \eqref{choice}) where $C_1$ is some universal constant.

	Finally, we show how to incorporate  the integral  $ \beta \int \rho^{-2\alpha-2+\epsilon} e^{2\alpha \rho^{\epsilon}} \langle A Xu, Xu\rangle dxdt $ in the left hand side of \eqref{df}  by an interpolation type argument as before.
	
	We have  by integration by parts,
\begin{align}\label{ty1}
&  \beta\int \rho^{-2\alpha-2+\epsilon} e^{2\alpha \rho^{\epsilon}} \langle A X u, X u \rangle  =  \beta\int \rho^{-2\alpha-2+\epsilon} e^{2\alpha \rho^{\epsilon}} \, \langle A X (\rho^{\beta} e^{-\alpha \rho^{\epsilon}} v), X (\rho^{\beta} e^{-\alpha \rho^{\epsilon}} v)\rangle
\\		
&= - \beta \int X(\rho^{-2\alpha-2+\epsilon} e^{2\alpha \rho^{\epsilon}}) \cdot A \, X(\rho^{\beta} e^{-\alpha \rho^{\epsilon}} v) [\rho^{\beta} e^{-\alpha \rho^{\epsilon}} v]-  \beta \int \La  (\rho^{\beta} e^{-\alpha \rho^{\epsilon}} v) [\rho^{-2\alpha-2+\epsilon} e^{2\alpha \rho^{\epsilon}}] \, [\rho^{\beta} e^{-\alpha \rho^{\epsilon}} v]
\notag
\end{align}

Now by writing $\La u = (\La u + V u) - Vu$, we obtain  from \eqref{ty1} that the following inequality holds,
\begin{align}\label{ty2}
& \beta\int \rho^{-2\alpha-2+\epsilon} e^{2\alpha \rho^{\epsilon}} \langle A X u, X u \rangle  
\\
&\leq  \beta  \int \rho^{-Q+\epsilon} (2\alpha+2-\epsilon-2\alpha \epsilon \rho^{\epsilon}) (\beta-\alpha \epsilon \rho^{\epsilon}) \mu  \, v^2    + \beta   \bigg| \int \rho^{-Q+\epsilon} (2\alpha+2-\epsilon-2\alpha \epsilon \rho^{\epsilon}) \mu \, Fv\cdot v \,\bigg| \notag
 \\
& +  \beta  \bigg| \int ( \La u  + Vu) \, v \, [\rho^{-2\alpha+\beta-2+\epsilon} e^{\alpha \rho^{\epsilon}}]\bigg|  + C \beta^3  \int  \rho^{-Q+2 +\ve} v^2 \mu,
\notag
\end{align}
where in the last inequality above, we used that $|V| \leq \beta^2 \psi$ which follows from \eqref{ch1}. Therefore, at  this point by suitably applying Cauchy Schwartz inequality to the integrals,
\[
 \bigg| \int \rho^{-Q+\epsilon} (2\alpha+2-\epsilon-2\alpha \epsilon \rho^{\epsilon}) \mu \, Fv\cdot v \,\bigg| 
 \]
 and
 \[
 \beta  \bigg| \int ( \La u  + Vu) \, v \, [\rho^{-2\alpha+\beta-2+\epsilon} e^{\alpha \rho^{\epsilon}}]\bigg|, 
 \]
 we can argue as in the proof of Theorem \ref{thm2}  using the estimate \eqref{rt4} instead of \eqref{rt41} to get to the desired conclusion.

\end{proof}

\subsection*{Proof of Theorem \ref{main2}}

The proof of Theorem \ref{main2} is a consequence of the following Carleman estimate after  which one can repeat the arguments in \cite{BGM}. 

\begin{thrm}\label{sub1}
	Let $\ve \in (0,1)$, $1< q< 2$ and let $f$ satisfy the assumptions in \eqref{a1}. Then for  every $\ve>0$, there exists $C$ universal depending also on $\ve$ and the constants in \eqref{a1}  such that for every $\alpha >0$ sufficiently large and $u \in S^{2,2}_{0}(B_R \setminus \{0\})$ with  $\text{supp}\ u \subset (B_R \setminus \{0\})$, one has
	\begin{align}\label{f10}
&	\alpha^3 \int e^{2\alpha  \rho^{\epsilon}} \left[ \rho^{-2\alpha-4+\epsilon}u^2 \mu  \rho^{-2\alpha-2} |u|^q \,  \mu \right] \\
&+ \alpha  \int   \rho^{-2\alpha-2+\epsilon} e^{2\alpha  \rho^{\epsilon}} \langle AXu, Xu \rangle   \leq C  \, \int \rho^{-2\alpha} \, e^{2\alpha  \rho^{\epsilon}} \, (\La  u + f((z,t), u) \psi)^2 \mu^{-1},.\notag
	\end{align}
	provided $R \leq R_0$ where $R_0$ is sufficiently small.
	\end{thrm}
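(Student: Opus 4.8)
The plan is to adapt the proof of Theorem \ref{thm2} (estimate \eqref{est1}), introducing the sublinear nonlinearity $f$ as an extra term that must be absorbed. As before, I would set $v = \rho^{-\beta} e^{\alpha \rho^{\epsilon}} u$ with $\beta = (2\alpha + 4 - Q)/2$, so that $2\beta - 2\alpha - 4 = -Q$. Expanding $\La u + f((z,t),u)\psi$ in terms of $v$ gives exactly the right-hand side of \eqref{f111} but with $V\rho^{\beta}e^{-\alpha\rho^{\epsilon}}v$ replaced by $f((z,t), \rho^{\beta}e^{-\alpha\rho^{\epsilon}}v)\psi$. Then applying $(a+b)^2 \ge a^2 + 2ab$ with $a = 2\beta\rho^{\beta-2}\mu F v$ and $b$ the remaining terms, the principal-part analysis (the estimates \eqref{p8}, \eqref{p10}, \eqref{p11}, the Rellich identity \eqref{re} with $G = \rho^{-Q+2}F$, and the interpolation leading to \eqref{rt41}) goes through verbatim, leaving one genuinely new cross term
\[
4\beta \int \rho^{-2\alpha + 2\beta - 2} Fv \cdot f((z,t), \rho^{\beta}e^{-\alpha\rho^{\epsilon}}v)\, \psi = 4\beta \int \rho^{-Q+2} Fv \cdot f((z,t), u)\, \psi\, e^{2\alpha\rho^{\epsilon}}\rho^{-\beta}
\]
to be handled.

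The key new step is to rewrite this cross term using the primitive $G$. Since $F\rho = \rho$ and $F$ is a first-order vector field, one has $F\big(G((z,t), u)\big) = f((z,t),u)\, Fu + \big(\nabla_{(z,t)} G\big)\cdot F_{(z,t)}$, where $F_{(z,t)}$ denotes the components of $F$ along the base variables; here I would exploit the third condition in \eqref{a1}, namely $|\nabla_{(z,t)} f|\le K|f|$ and $|\nabla_{(z,t)} G| \le KG$, to control the error from differentiating the coefficients of $f$ and $G$. Writing $Fu$ in terms of $Fv$ via $u = \rho^{\beta}e^{-\alpha\rho^{\epsilon}}v$ (so $Fu = \rho^{\beta}e^{-\alpha\rho^{\epsilon}}(Fv + (\beta - \alpha\epsilon\rho^{\epsilon})\rho^{-1}\cdot\rho \, v /\rho)$, using $F\rho=\rho$), the cross term becomes, after an integration by parts moving $F$ off $G((z,t),u)$ and using \eqref{i1}-type divergence identities,
\[
-2\beta \int \operatorname{div}\!\big(\rho^{-Q+\gamma} F\big) G((z,t), u)\, \psi\, \big(\text{weight}\big) + (\text{lower order}),
\]
which by the sign condition $0 < s f((z,t),s) \le q G((z,t),s)$ with $q \in (1,2)$ — hence $G \ge 0$ and $G \le c_1 |u|^p$, $G \ge c_0|u|^q$ from \eqref{a0} — produces a controllable main contribution of the form $\beta^2 \int \rho^{-Q+\epsilon'} |u|^q \psi\, e^{2\alpha\rho^{\epsilon}}\rho^{-2\alpha-2}$ plus errors of order $\beta \int \rho^{-Q+1}(\cdots)$. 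The point is that this nonnegative term of order $|u|^q$ is precisely what appears (with a favorable sign) on the left-hand side of \eqref{f10}; the remaining lower-order errors, together with the $K$-dependent errors from $|\nabla G|\le KG$, are absorbed into the dominant $\beta^3 \int \rho^{-Q+\epsilon} u^2 \mu$ term by choosing $\alpha$ (hence $\beta$) sufficiently large depending on the constants in \eqref{a1} — using also the bound $G \le c_1|u|^p$ with $p<2$ and $|u|\le 1$ so that $|u|^p \le |u|^2$ fails, but rather $|u|^q \le |u|$ and Young's inequality let one trade powers, exploiting $q,p \in (1,2)$ and the smallness of $\rho \le R_0$.

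The main obstacle I anticipate is keeping track of the $(z,t)$-derivatives of $f$ and $G$ when integrating by parts: the vector field $F$ has components along both $z$ and $t$, and while $F$ acting on functions of $\rho$ alone is harmless ($F\rho = \rho$), $F$ acting on $G((z,t),u)$ unavoidably hits the explicit spatial dependence, so one must verify that $|\nabla_{(z,t)} G|\cdot |F_{(z,t)}| \le C\rho K G \cdot (\text{something bounded})$ and that this is genuinely lower order relative to the $\rho^{-Q+\epsilon}$ weight — this is where the precise form of the Lipschitz-type bounds in \eqref{a1} and the estimate $|F - Z| \le C\rho^2$ from Theorem \ref{Est1}(v) must be used carefully. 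A secondary technical point is that one cannot simply bound $|u|^q$ by $u^2$ since $q < 2$; instead, the $|u|^q$ term has to be \emph{kept} on the left-hand side as an independent good term (as the statement of \eqref{f10} makes explicit), and one must make sure the interpolation argument that moves $\alpha\int \rho^{-2\alpha-2+\epsilon}e^{2\alpha\rho^{\epsilon}}\langle AXu,Xu\rangle$ to the left (as in \eqref{ok11}-\eqref{y1}) does not interfere with it — which it does not, since that argument only generates multiples of the $u^2\mu$ and $(Fv)^2\mu$ terms and a small multiple of the right-hand side.
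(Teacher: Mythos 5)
Your proposal follows the paper's proof of Theorem~\ref{sub1} quite closely and correctly identifies the key new idea: rewrite the cross term $4\beta\int\rho^{-2\alpha+\beta-2}e^{\alpha\rho^{\epsilon}}Fv\cdot f(\cdot,u)\psi$ via the chain rule for $F\big(G((z,t),u)\big)$, integrate by parts to move $F$ off $G$, and exploit the structural conditions $0<sf\le qG$ with $q<2$, $|\nabla_{(z,t)}G|\le KG$, and $G\ge c_0|u|^q$ to produce a good $|u|^q$ term on the left. This is exactly equations \eqref{i19}--\eqref{3.37} in the paper, and your observation that the $|u|^q$ term must be \emph{kept} as a separate coercive term rather than merged into $u^2$ is the right instinct.

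One point in your absorption bookkeeping is misdirected. You claim the $K$-dependent error coming from $|\nabla_{(z,t)}G|\le KG$ (and the residual terms from the $q G$ bound) ``are absorbed into the dominant $\beta^3\int\rho^{-Q+\epsilon}u^2\mu$ term.'' This cannot work: those errors are of the form $C\beta\int\rho^{-2\alpha-2}e^{2\alpha\rho^{\epsilon}}G\psi$, and since $G\gtrsim|u|^q\ge u^2$ for $|u|\le 1$ and $q<2$, the ratio $G/u^2$ can be arbitrarily large when $u$ is small, so $\beta^3\int u^2\mu$ does not dominate $\beta\int G\psi$ uniformly. What actually saves the argument (and what the paper does in \eqref{m11}--\eqref{3.37}) is that the integration by parts produces, as the \emph{leading} coefficient of $\int\rho^{-2\alpha-2}e^{2\alpha\rho^{\epsilon}}G\psi$, the quantity $4\beta^2(2-q)$ up to lower order; since $q<2$, this is a strictly positive multiple of $\beta^2$, and the $O(\beta)$ errors from $\langle\nabla_{(z,t)}G,F\rangle\le C_2 G$ and from the $(\beta-\alpha\epsilon\rho^\epsilon)$-bracket are absorbed into \emph{that same} $\beta^2\int G\psi$ term for $\beta$ large. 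A similar correction applies to your final remark: the gradient interpolation step \emph{does} generate an extra $\beta q\int\rho^{-2\alpha-2+\epsilon}e^{2\alpha\rho^\epsilon}G\psi$ term (after writing $\La u=(\La u+f\psi)-f\psi$ and using $uf\le qG$), which again must be swallowed by $\beta^2\int G\psi$, not just by the $u^2$ and $(Fv)^2$ terms. With these reroutings your outline matches the paper's argument, and the passage from $G$ to $|u|^q$ via \eqref{a0} at the very end gives \eqref{f10}.
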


\begin{proof}
	The proof is similar to that of Theorem \ref{thm2} except that we additionally exploit the intrinsic nature of the sublinearity $f((z,t), u)$ and the structural assumptions  in \eqref{a1}.  As before, we let $u= \rho^{\beta} e^{\alpha \rho^{\epsilon}}v$ where $\alpha$ and $\beta$ are related as in \eqref{choice}.  In terms of $v$, we  have that
	\begin{align}\label{f11}
	\La u + f((z,t), u) \psi &= \La v (\rho^{\beta} \, e^{-\alpha \rho^{\epsilon}})+2 \mu Fv \left[\rho^{\beta-2} \, e^{-\alpha \rho^{\epsilon}}(\beta-\alpha \epsilon \rho^{\epsilon})\right]+\left[X_ia_{ij} \cdot (\beta -\alpha \epsilon \rho^{\epsilon}) \rho^{\beta-1} e^{-\alpha \rho^{\epsilon}} X_j \rho\right] v\\
	&+\left[a_{ij}[\rho^{\beta-2} e^{-\alpha \rho^{\epsilon}}(\beta(\beta-1)-\alpha \epsilon \rho^{\epsilon}(2\beta+\epsilon-1)+(\alpha \epsilon)^2 \rho^{2\epsilon})]X_i \rho X_j \rho\right] v \nonumber \\
	&+\left[a_{ij}\left(\rho^{\beta-1} \, e^{-\alpha \rho^{\epsilon}}(\beta-\alpha \epsilon \rho^{\epsilon})\right)X_i X_j \rho\right]	v + f((z,t), \rho^\beta \, e^{-\alpha \rho^{\epsilon}} \, v)  \psi. \nonumber
	\end{align}
	
	Now again the integral 
	\[
	\int \rho^{-2\alpha} \, e^{2\alpha  \rho^{\epsilon}} \, (\La u + f((z,t), u) \psi)^2 \mu^{-1}
	\]
	is estimated from below  by using $(a+b)^2 \geq a^2 + 2ab$, with $a= 2\beta \rho^{\beta-2} \mu Fv$ and $b$ being the rest of the terms in \eqref{f11}. In this case,  we note that all the other terms  ( with the exception of \eqref{ad} below) are handled in the same way as before and  therefore we only need to  focus our attention on the following additional term  which is incurred due to  the presence of the additional  term $f((z,t) \rho^\beta e^{-\alpha \rho^{\epsilon}} v)  \psi$  in \eqref{f11} ( that is not present in  \eqref{b1} !), i.e.
	\begin{equation}\label{ad}
	4 \beta \int \rho^{-2\alpha + \beta -2} e^{\alpha \rho^{\epsilon}} Fv f((z,t), \rho^\beta e^{-\alpha \rho^{\epsilon}} \, v) \psi.
	\end{equation}
	Now from the  fact that $G$ is the $"s-$antiderivative" of $f$ we have
	\begin{align}\label{i19}
	& F \left(G((z,t), \rho^{\beta} e^{-\alpha \rho^{\epsilon}}v) \right)= Fv f( (z,t), \rho^\beta e^{-\alpha \rho^{\epsilon}} v) \rho^{\beta} e^{-\alpha \rho^{\epsilon}}+ (\beta-\alpha \epsilon \rho^{\epsilon}) \rho^{\beta} \, e^{-\alpha \rho^{\epsilon}} v f((z,t), \rho^{\beta} \, e^{-\alpha \rho^{\epsilon}} v)
	\\
	& + \langle \nabla_{(z,t)} G((z,t), \rho^{\beta} e^{-\alpha \rho^{\epsilon}} v), F\rangle.
	\notag
	\end{align}
	Note that  in \eqref{i19} above,  we used the fact that $F\rho^\beta=\beta \rho^{\beta}$. Then by using \eqref{i19} we obtain
	\begin{align}\label{m10}
	&4 \beta \int \rho^{-2\alpha + \beta -2} e^{\alpha \rho^{\epsilon}}  Fv f((z,t), \rho^\beta e^{-\alpha \rho^{\epsilon}} v) \psi = 4 \beta \int F \left(G((z,t), \rho^{\beta} e^{-\alpha \rho^{\epsilon}} v) \right) \rho^{-2\alpha-2} e^{\alpha \rho^{\epsilon}} \psi
	\\
	& - 4\beta \int (\beta-\alpha \epsilon \rho^{\epsilon})\rho^{-2\alpha -2} f((z,t), \rho^{\beta} \, e^{-\alpha \rho^{\epsilon}} v) \rho^{\beta} e^{\alpha \rho^{\epsilon}}v \psi  -4 \beta \int \rho^{-2\alpha-2} e^{2\alpha \rho^{\epsilon}} \langle \nabla_{(z,t)} G((z,t), \rho^{\beta} e^{-\alpha \rho^{\epsilon}}v), F\rangle  \psi.
	\notag
	\end{align}
	Now from  the third condition in \eqref{a1} we have 
	\begin{equation}\label{i8}
	f((z,t), \rho^{\beta} e^{-\alpha \rho^{\epsilon}}v) \rho^{\beta} e^{-\alpha \rho^{\epsilon}}v \leq q G((z,t), \rho^{\beta} e^{-\alpha \rho^{\epsilon}} v)
	\end{equation}
	and the fourth condition in \eqref{a1} implies 
	\begin{equation}\label{i7}
	\bigg<\nabla_{z, t} G, F\bigg > \leq  C_2 G.
	\end{equation}
	Thus by  using \eqref{i8} and \eqref{i7} in \eqref{m10} we get the following inequality,  
	\begin{align}\label{m11}
	&4 \beta  \int \rho^{-2\alpha + \beta -2}  e^{\alpha \rho^{\epsilon}} Fv f((z,t), \rho^\beta \,  e^{-\alpha \rho^{\epsilon}}  v)  \, \psi
	\\
	& \geq 4 \beta \int F \bigg(G((z,t), \rho^{\beta} e^{-\alpha \rho^{\epsilon}}v)\bigg) \rho^{-2\alpha-2} e^{2\alpha \rho^{\epsilon}} \psi- 4\beta q  \int (\beta-\alpha \ve  \rho^{\epsilon})\rho^{-2\alpha -2} e^{2\alpha \rho^{\epsilon}} G((z,t), \rho^\beta e^{-\alpha \rho^{\epsilon}} v) \psi
	\notag
	\\
	& - 4 C_2  \beta \int \rho^{-2\alpha-2} e^{2\alpha \rho^{\epsilon}} G((z,t), \rho^{\beta} e^{-\alpha \rho^{\epsilon}} v)  \psi.
	\notag
	\end{align}
	where in order to estimate the last integral, we used that $\mu \sim \psi$. 
	Now the first term in the right hand side of \eqref{m11}, i.e. the integral
	\[
	4 \beta \int F \bigg( G((z,t), \rho^{\beta} e^{-\alpha \rho^{\epsilon}}v)  \bigg) \rho^{-2\alpha-2} e^{2\alpha \rho^{\epsilon}} \psi
	\]
	is handled using integration by parts in the following way using the estimates in Theorem \ref{Est1} i) and ii). 
	
	\begin{align}\label{m13}
	&4 \beta \int F\bigg( G((z,t), \rho^{\beta} e^{-\alpha \rho^{\epsilon}}v) \bigg) \rho^{-2\alpha-2} e^{2\alpha \rho^{\epsilon}}\psi
	\\
	& = - 4 \beta \int G((z,t), \rho^{\beta} e^{-\alpha \rho^{\epsilon}}v) \operatorname{div}(\rho^{-2\alpha-2} e^{2\alpha \rho^{\epsilon}} F \psi ) 
	\notag
	\\
	&=  \int (8\beta (\beta-1)  - 8 \alpha \beta \ve \rho^{\ve}   -4\beta O(\rho))\rho^{-2\alpha-2} e^{2\alpha \rho^{\epsilon}} G((z,t), \rho^{\beta} v) \psi.
	\notag
	\end{align}
	
	We note that over here we used \eqref{choice} which implies that
	\begin{eqnarray*}
	\operatorname{div}(\rho^{-2\alpha-2} e^{2\alpha \rho^{\epsilon}} F)&=& [-2\alpha-2+2\alpha \epsilon  \rho^{\epsilon}]\rho^{-2\alpha-2} e^{2\alpha \rho^{\epsilon}}+\rho^{-2\alpha-2} e^{2\alpha \rho^{\epsilon}} \operatorname{div} F\\
	&&=(-2 (\beta -1)+2\alpha \epsilon \rho^{\epsilon} +O(\rho))\rho^{-2\alpha-2} e^{2\alpha \rho^{\epsilon}}.
	\end{eqnarray*}
	Now since $q <2 $, by using \eqref{m11} and \eqref{m13}   we obtain that 
	\begin{align} \label{3.37}
	&\int \rho^{-2\alpha + \beta -2} e^{\alpha \rho^{\epsilon}} 4 \beta Fv f((z,t), \rho^\beta e^{-\alpha \rho^{\epsilon}} v) \psi \\
	& \geq   \int ( 4\beta^2( 2-q)+4\alpha \beta \ve  \rho^{\epsilon}(q-2)-C \rho -4 C_2 \beta - 8 \beta)  \rho^{-2\alpha-2} e^{2\alpha \rho^{\epsilon}} G((z,t), \rho^{\beta} e^{-\alpha \rho^{\epsilon}} v) \psi \notag
	\\
	& \geq    c \beta^2  \int \rho^{-2\alpha-2} e^{2\alpha \rho^{\epsilon}} G((z,t), \rho^{\beta} e^{-\alpha \rho^{\epsilon}} v) \psi,\ \text{for large enough $\beta$ provided $R$ is small enough}.
	\notag
	\end{align}
	
	Thus it follows from the computations  as in the proof of Theorem \ref{thm2} and by using \eqref{3.37}  that the following inequality holds,
	\begin{eqnarray}\label{rt40}
&& \hspace{.5in}\int \rho^{-2\alpha} \, e^{2\alpha \rho^{\epsilon}} \, (\La  u + f((z,t), u) \psi )^2 \mu^{-1} + C_1 \beta \int \rho^{-2\alpha-1} e^{2\alpha \rho^{\epsilon}}\langle A Xu, Xu\rangle \\
&&\geq    3 \beta^2 \int \, \rho^{-Q}  (Fv)^2 \mu  +  c \beta^3 \ve^2  \int \rho^{-2\alpha-4+\epsilon} e^{2\alpha \rho^{\epsilon}}  u^2 \, \mu  + c \beta^2  \int \rho^{-2\alpha-2} e^{2\alpha \rho^{\epsilon}} G((z,t), u) \psi. \nonumber
\end{eqnarray}

Finally as before, we show how to incorporate  the integral  $ \beta  \int \rho^{-2\alpha-2+\epsilon} e^{2\alpha \rho^{\epsilon}} \langle A Xu, Xu\rangle$ in the left hand side of \eqref{f10} by interpolation. 

We have,
\begin{align}\label{rt71}
& \beta \int\rho^{-2\alpha-2+\epsilon} e^{2\alpha \rho^{\epsilon}} \langle A X u, X u \rangle =\beta \int \rho^{-2\alpha-2+\epsilon} e^{2\alpha \rho^{\epsilon}} \, \langle A X (\rho^{\beta} e^{-\alpha \rho^{\epsilon}} v), X (\rho^{\beta} e^{-\alpha \rho^{\epsilon}} v)\rangle\\
&=-\beta \int X(\rho^{-2\alpha-2+\epsilon} e^{2\alpha \rho^{\epsilon}}) \cdot A \, X(\rho^{\beta} e^{-\alpha \rho^{\epsilon}} v) [\rho^{\beta} e^{-\alpha \rho^{\epsilon}} v]-\beta \int \La u \, v \, [\rho^{-2\alpha+\beta-2+\epsilon} e^{\alpha \rho^{\epsilon}}].
\notag
\end{align}
Now by rewriting $\La u= (\La u + f((z, t), u) \psi ) - f((z,t). u) \psi$ we obtain from \eqref{rt71} that
\begin{align}\label{bn}
& \beta \int\rho^{-2\alpha-2+\epsilon} e^{2\alpha \rho^{\epsilon}} \langle A X u, X u \rangle \\
&\leq  \beta  \int \rho^{-Q+\epsilon} (2\alpha+2-\epsilon-2\alpha \epsilon \rho^{\epsilon}) (\beta-\alpha \epsilon \rho^{\epsilon}) \mu  \, v^2     + \beta   \bigg| \int \rho^{-Q+\epsilon} (2\alpha+2-\epsilon-2\alpha \epsilon \rho^{\epsilon}) \mu \, Fv\cdot v \,\bigg| \notag
 \\
& +  \beta  \bigg| \int ( \La u  + f((z,t), u) ) \, v \, [\rho^{-2\alpha+\beta-2+\epsilon} e^{\alpha \rho^{\epsilon}}]\bigg|  +  \beta   \int  \rho^{-2\alpha-2 +\ve} e^{2\alpha \rho^\ve} f((z, t), u) u \mu
\notag
\\
& \leq  \beta  \int \rho^{-Q+\epsilon} (2\alpha+2-\epsilon-2\alpha \epsilon \rho^{\epsilon}) (\beta-\alpha \epsilon \rho^{\epsilon}) \mu  \, v^2     + \beta   \bigg| \int \rho^{-Q+\epsilon} (2\alpha+2-\epsilon-2\alpha \epsilon \rho^{\epsilon}) \mu \, Fv\cdot v \,\bigg| \notag
 \\
& +  \beta  \bigg| \int ( \La u  + f((z,t), u) ) \, v \, [\rho^{-2\alpha+\beta-2+\epsilon} e^{\alpha \rho^{\epsilon}}]\bigg|  +  \beta   q \int  \rho^{-2\alpha-2 +\ve} e^{2\alpha \rho^\ve} G((z,t), u)  \psi,
\notag
\end{align}
where in the last inequality in \eqref{bn} above, we used that $u f((z,t), u) \leq q G((z, t), u)$.  We now note that the last integral in \eqref{bn} above, i.e.
\[
\beta   q \int  \rho^{-2\alpha-2 +\ve} e^{2\alpha \rho^\ve} G((z,t), u)  \mu
\]
can be estimated from above by the following integral in \eqref{rt40}, i.e.
 \[c \beta^2  \int \rho^{-2\alpha-2} e^{2\alpha \rho^{\epsilon}} G((z,t), u) \psi,\]  provided $\beta$ is sufficiently large.  At this point , the rest of the argument  is similar  to that for Theorem \ref{thm2} where we use the inequality \eqref{rt40} instead of \eqref{rt41} and we finally arrive at the following estimate
 \begin{align}\label{f0}
&	\alpha^3 \int e^{2\alpha  \rho^{\epsilon}} \left[ \rho^{-2\alpha-4+\epsilon}u^2 \mu + \rho^{-2\alpha-2} G((z,t), u)  \,  \mu \right] \\
&+ \alpha  \int   \rho^{-2\alpha-2+\epsilon} e^{2\alpha  \rho^{\epsilon}} \langle AXu, Xu \rangle   \leq C \, \int \rho^{-2\alpha} \, e^{2\alpha  \rho^{\epsilon}} \, (\La  u + f((z,t), u) \psi)^2 \mu^{-1} .\notag
	\end{align} 
	The desired inequality \eqref{f10} now follows by using \eqref{a0} in \eqref{f0}.

\end{proof}

\subsection*{Proof of Theorem \ref{hardy}}

\begin{proof}
	Let  $v= e^{\frac{\beta}{2} (\log \rho)^{2}}   \, u$.   Then it follows, 	\begin{eqnarray*}
		\La u=\La v (e^{-\frac{\beta}{2} (\log \rho)^{2}} )+2 a_{ij} \, X_jv X_i(e^{-\frac{\beta}{2} (\log \rho)^{2}} )+\La (e^{-\frac{\beta}{2} (\log \rho)^{2}} ) v
	\end{eqnarray*}
Since $a_{ij}=\delta_{ij}+b_{ij}$, therefore we have, 
	
	\begin{eqnarray*}
		\La u=\La v (e^{-\frac{\beta}{2} (\log \rho)^{2}})+2 a_{ij} \, X_jv X_i(e^{-\frac{\beta}{2} (\log \rho)^{2}})+\Ba(e^{-\frac{\beta}{2} (\log \rho)^{2}}) v+ X_i(b_{ij} X_j (e^{-\frac{\beta}{2} (\log \rho)^{2}})) v.
	\end{eqnarray*}
	
	Now, we compute the  last two terms in the right hand side of the above expression. By a standard  calculation we obtain, 	
	\begin{eqnarray*}
		\Ba(e^{-\frac{\beta}{2}(\log \rho)^2})=\psi \, e^{-\frac{\beta}{2}(\log \rho)^2} \, \rho^{-2}\left((\beta \log \rho)^2-\beta-(Q-2) \beta \log \rho \right),
	\end{eqnarray*}
	and
	\begin{eqnarray*}
		X_i(e^{-\frac{\beta}{2}(\log \rho)^2})=-\beta (\log \rho) \rho^{-1} \, e^{-\frac{\beta}{2}(\log \rho)^2} X_i\rho.
	\end{eqnarray*}
Similarly we have,  	 
	\begin{eqnarray*}
		&&	X_iX_j(e^{-\frac{\beta}{2}(\log \rho)^2})=X_i\left[-\beta (\log \rho) \rho^{-1} \, e^{-\frac{\beta}{2}(\log \rho)^2} X_j\rho\right]\\
		&&	=\beta \rho^{-2} \,  e^{-\frac{\beta}{2}(\log \rho)^2} \, \left(-1+(\log \rho)+\beta (\log \rho)^2\right) X_i\rho X_j \rho-\beta (\log \rho) \rho^{-1} \, e^{-\frac{\beta}{2}(\log \rho)^2} X_i X_j\rho.
	\end{eqnarray*} 
	Thus, 
	\begin{eqnarray*}
		&&\La u=	\La v (e^{-\frac{\beta}{2}(\log \rho)^2})+2 a_{ij} \, X_jv X_i\rho \left[-\beta (\log \rho) \rho^{-1} \, e^{-\frac{\beta}{2}(\log \rho)^2}\right]\\
		&&+\left[ \psi \, e^{-\frac{\beta}{2}(\log \rho)^2} \, \rho^{-2}\left((\beta \log \rho)^2-\beta-(Q-2) \beta \log \rho \right)\right] v\\
		&&- \left[X_ib_{ij}  \beta (\log \rho) \rho^{-1} \, e^{-\frac{\beta}{2}(\log \rho)^2} X_j\rho\right] v+ \left[b_{ij}\left(-\beta (\log \rho) \rho^{-1} \, e^{-\frac{\beta}{2}(\log \rho)^2}\right)X_i X_j \rho\right]	v \\
		&&+\left[b_{ij}\left(\beta \rho^{-2} \,  e^{-\frac{\beta}{2}(\log \rho)^2} \, \left(-1+(\log \rho)+\beta (\log \rho)^2\right)\right)X_i \rho X_j \rho\right] v.
	\end{eqnarray*}
%
Consequently  in terms of the vector field $F$, we observe that $\La u $ can be written as, 
	
	\begin{eqnarray} \label{c1}
		&&\La u=	\La v (e^{-\frac{\beta}{2}(\log \rho)^2})+2 \mu Fv \left[-\beta (\log \rho) \rho^{-2} \, e^{-\frac{\beta}{2}(\log \rho)^2}\right]\\
		&&+\left[ \psi \, e^{-\frac{\beta}{2}(\log \rho)^2} \, \rho^{-2}\left((\beta \log \rho)^2-\beta-(Q-2) \beta \log \rho \right)\right] v\nonumber\\
		&&+\left[X_ib_{ij} \cdot \beta (-\log \rho) \rho^{-1} \, e^{-\frac{\beta}{2}(\log \rho)^2} X_j\rho\right] v+ \left[b_{ij}\left(-\beta (\log \rho) e^{-\frac{\beta}{2} (\log \rho)^2}\rho^{-1} \, \right)X_i X_j \rho\right]	v \nonumber\\
		&&+\left[b_{ij}\left(\beta \rho^{-2} e^{-\frac{\beta}{2} (\log \rho)^2}  \, \left(-1+(\log \rho)+\beta (\log \rho)^2\right)\right)X_i \rho X_j \rho\right] v.\nonumber
	\end{eqnarray}		
	Now using  $(a+b)^2\geq a^2+2ab$ with $a= 2 \beta \, (-\log \rho) \, \mu Fv \rho^{-2} $ and with  $b$ being the  rest of the terms on \eqref{c1}  above,  we  obtain,
	\begin{eqnarray*}
		e^{\beta(\log \rho)^2}(\La u)^2 &\geq& 4\beta^2  \rho^{-4} (\log \rho)^2 \,  \mu^2 (Fv)^2+4\beta \mu \rho^{-2} (-\log \rho)  \, Fv \La v \\
		&&+4 \beta \, (-\log \rho) \, \mu  \rho^{-2}\bigg[ \left[\psi \,  \, \rho^{-2}\left((\beta \log \rho)^2-\beta-(Q-2) \beta \log \rho \right) \right] \\
		&&+\left[X_ib_{ij} \cdot \beta (-\log \rho) \rho^{-1} X_j\rho\right]+\left[b_{ij}\left(\beta (-\log \rho) \rho^{-1} \right)X_i X_j \rho\right] \\
		&&+\left[b_{ij}\left(\beta \rho^{-2} \,   \, \left(-1+(\log \rho)+\beta (\log \rho)^2\right)\right)X_i \rho X_j \rho\right] \bigg] Fv \cdot	v \nonumber.
	\end{eqnarray*}
	Hence,
	\begin{align}\label{c2}
	&\int \rho^{-Q+4}  \, e^{\beta(\log \rho)^2}(\La
	 u)^2  \mu^{-1} \\
	& \geq \int [4\beta^2  \rho^{-Q} (\log \rho)^2 \, \mu (Fv)^2] \,  dz dt +\int 4\beta \rho^{-Q+2} (-\log \rho) \, Fv \La v  \notag\\
	&
	+ \int 4 \beta \,  \rho^{-Q+2} \, (-\log \rho) \bigg[\left[\psi  \, \rho^{-2}\left((\beta \log \rho)^2-\beta-(Q-2) \beta \log \rho \right)\right] \notag\\
	&+\left[X_ib_{ij} \cdot \beta (-\log \rho) \rho^{-1} \,  X_j\rho\right]+\left[b_{ij}\left(\beta (-\log \rho) \rho^{-1} \, \right)X_i X_j \rho\right] \notag\\
	&+\left[b_{ij}\left(\beta \rho^{-2}  \, \left(-1+(\log \rho)+\beta (\log \rho)^2\right)\right)X_i \rho X_j \rho\right]\bigg] Fv \cdot	v  \notag\\
	& = \int [4\beta^2  \rho^{-Q} (\log \rho)^2 \, \mu (Fv)^2] \,  dz dt +\int 4\beta \rho^{-Q+2} (-\log \rho) \, Fv \La v \notag\\
	&
	+ \int 4 \beta \,  \rho^{-Q+2} \, (-\log \rho) \bigg[\left[\psi \,  \, \rho^{-2}\left((\beta \log \rho)^2-\beta-(Q-2) \beta \log \rho \right)\right] \notag\\
	&+\left[X_ib_{ij} \cdot \beta (-\log \rho) \rho^{-1} \,  X_j\rho\right]+\left[b_{ij}\left(\beta (-\log \rho) \rho^{-1} \, \right)X_i X_j \rho\right] \notag\\
	&+\left[b_{ij}\left(\beta \rho^{-2} \,   \, \left(-1+(\log \rho)+\beta (\log \rho)^2\right)\right)X_i \rho X_j \rho\right]\bigg] F\left(\frac{v^2}{2}\right)\notag
	\end{align}
	We first handle the following term in the right hand side of  \eqref{c2} above,
	\begin{align}\label{c3}
	&
	\int 4 \beta \,  \rho^{-Q+2} \, (-\log \rho) \bigg[\left[\psi \,  \, \rho^{-2}\left((\beta \log \rho)^2-\beta-(Q-2) \beta \log \rho \right) \right]+\left[b_{ij}\left(\beta (-\log \rho) \rho^{-1} \, \right)X_i X_j \rho\right]\\
	& +\left[b_{ij}\left(\beta \rho^{-2} \,   \, \left(-1+(\log \rho)+\beta (\log \rho)^2\right)\right)X_i \rho X_j \rho\right]\bigg] F\bigg(\frac{v^2}{2}\bigg). \notag
	\end{align}
	
	
 We now  look at each individual term in  \eqref{c3}.
%

First we observe that by applying  integration by parts to the integral
\[
\int 4 \beta \, [(\beta \log \rho)^2-\beta-(Q-2) \beta \log \rho] (-\log \rho) \rho^{-Q}\psi F \bigg(\frac{v^2}{2}\bigg)
\]

we get that the following holds, 

\begin{align} \label{c5}
& \int 4 \beta \, [(\beta \log \rho)^2-\beta-(Q-2) \beta \log \rho] (-\log \rho) \rho^{-Q}\psi F \bigg(\frac{v^2}{2}\bigg)=  4 \int \beta^3 \operatorname{div}[(\log \rho)^3\rho^{-Q} F\psi] \frac{v^2}{2}  \\
&-4 \int \beta^2 \operatorname{div}((\log \rho)\rho^{-Q} F\psi) \frac{v^2}{2} -4 \int \beta^2(Q-2) \operatorname{div}((\log \rho)^2\rho^{-Q} F\psi) \frac{v^2}{2} \notag
\end{align}

Now we estimate  each individual term in the right hand side of \eqref{c5}. We have  using the estimates in  Theorem \ref{Est1}, 

\begin{eqnarray}\label{kj1}
	4 \int \beta^3 \operatorname{div}[(\log \rho)^3\rho^{-Q} F\psi ] \frac{v^2}{2} \geq 6 \, \beta^3 \int \rho^{-Q}( \log \rho)^2 v^2 \psi - C\beta^3 \int \rho^{-Q+1}( \log \rho)^3 v^2 \psi.
\end{eqnarray}

Likewise it follows that, 
\begin{eqnarray*}
	&&-4 \int \beta^2 \operatorname{div}((\log \rho)\rho^{-Q} F\psi) \frac{v^2}{2} -4 \int \beta^2(Q-2) \operatorname{div}((\log \rho)^2\rho^{-Q} F\psi) \frac{v^2}{2} \\
	&& \geq2\beta^2	 \int  \rho^{-Q} [2(Q-2)(-\log \rho)-1]  \, v^2 \psi - C \int  \rho^{-Q+1}  [\beta^2(-\log \rho) + 4 \beta^2(Q-2)(\log \rho)^2 ].
\end{eqnarray*}	
	
Next, we see that

\begin{align}
	& 4 \beta^2 \int \rho^{-Q} (-\log \rho) (-1+(\log \rho)+\beta (\log \rho)^2)b_{ij} X_i \rho X_j \rho  F \bigg(\frac{v^2}{2} \bigg)\\ & = - 2 \beta^2 \int \operatorname{div} ( \rho^{-Q} (-\log \rho) (-1+(\log \rho)+\beta (\log \rho)^2)b_{ij} X_i \rho X_j \rho  F) v^2\notag 	\\
	& \geq - C\beta^2 \int \rho^{-Q+2} (-\log \rho) (-1+(\log \rho)+\beta (\log \rho)^2) v^2  \mu  - C\beta^2 \int \rho^{-Q+1} v^2 \mu - C\beta^3 \int \rho^{-Q+1} (\log \rho)^2 v^2 \mu
	\notag
		\end{align}
	
	Finally, the second integral in \eqref{c3}  can be estimated  using  the estimates in Theorem \ref{Est1} as well as the third derivative estimate in  Lemma \ref{lma3.3} in the following way, 
	\begin{align}\label{bv}
	& 4 \beta \int  \rho^{-Q+1} \left[ b_{ij} (-\log \rho)  X_iX_j \rho \right] F\bigg(\frac{v^2}{2} \bigg) \\
		&\int \operatorname{div}\bigg(4 \beta \,  \rho^{-Q+1}\left[b_{ij}\left(\beta (\log \rho)  \, \right)X_i X_j \rho F\right] \bigg)\frac{v^2}{2}\notag\\
		& \geq - C \beta^2 \int \rho^{-Q+1} (-\log \rho) v^2 \mu.\notag
			\end{align}
	Thus from \eqref{c5}-\eqref{bv} it follows that for all $R$ small enough, we have
	\begin{align} \label{c6}
	&\int 4 \beta \,  \rho^{-Q+2} \, (-\log \rho) \bigg[\left[\psi \,  \, \rho^{-2}\left((\beta \log \rho)^2-\beta-(Q-2) \beta \log \rho \right) \right]+\left[b_{ij}\left(\beta (-\log \rho) \rho^{-1} \, \right)X_i X_j \rho\right]\\
	& +\left[b_{ij}\left(\beta \rho^{-2} \,   \, \left(-1+(\log \rho)+\beta (\log \rho)^2\right)\right)X_i \rho X_j \rho\right]\bigg] F\bigg(\frac{v^2}{2}\bigg) \notag\\	
	& \geq 5 \, \beta^3 \int \rho^{-Q}( \log \rho)^2 v^2 \mu.\notag
	\end{align}
	Now using  $\sum_{i,j=1}^N|X_i b_{ij} \, X_j \rho| \leq C\mu$, we obtain by applying  Cauchy-Schwartz inequality that the following holds, 
	\begin{align}\label{re1}
		&\int 4 \beta \,  \rho^{-Q+1} \left[X_ib_{ij} \cdot \beta (-\log \rho) X_j \rho\right] Fv\cdot v 
		\\
		& \geq -  \beta^2 \int \rho^{-Q+1} ( Fv)^2 (\log(\rho))^2 \mu  - C \int \rho^{-Q+1} v^2 \mu\notag
	\end{align}

	We now estimate  the second integral in  \eqref{c2}, i.e.
	\[
	\int 4\beta \rho^{-Q+2} \, (-\log \rho)  \, Fv \La v.
	\]
	Now  in order to estimate  this integral, we use the Rellich type identity as in \eqref{re} with $G= \rho^{-Q+2} (-\log \rho) F$.
		 It follows  using \eqref{re}, the estimates in Theorem \ref{Est1} and by computations which are analogous to that in \eqref{ok110}-\eqref{rt3} that the following holds, 
	\begin{align}\label{c7}
	& 4 \beta \int \rho^{-Q+2} \, (-\log \rho) \, Fv \La  v  \geq  4\beta   \int \rho^{-Q}  ((Q-2)  (-\log \rho) +1)   (Fv)^2 \mu - 2  \beta \int \rho^{-Q+2}   <AXv, Xv>\\ &  - C\beta \int\rho^{-Q+3} (-\log \rho) \langle AXv, Xv \rangle
	 \geq 4\beta   \int \rho^{-Q}  ((Q-2)  (-\log \rho) +1)   (Fv)^2 \mu - \frac{5}{2} \beta \int \rho^{-Q+2}   <AXv, Xv>, \notag 		\end{align}
		
		where in the last inequality above,  we used that for all small enough $\rho$,
		\[
		C\rho^{-Q+3} \log (-\rho) \leq \frac{1}{2} \rho^{-Q+2}.
		\]

	Therefore by combining \eqref{c2}, \eqref{c6}, \eqref{re1} and \eqref{c7}, we finally deduce the following inequality for all $\beta$ large and $R$ small, 
	\begin{eqnarray}\label{c11}
	&&\int \rho^{-Q+4} \, e^{\beta (\log \rho)^{2}} \, (\La u)^2 \mu^{-1}  +\frac{5}{2} \beta \int  \rho^{-Q+2} \langle AXv, Xv \rangle \\
	&&\geq    3 \beta^2 \int  \rho^{-Q}(\log \rho)^2   (Fv)^2 \mu   + 4  \beta^3 \int \rho^{-Q} (\log \rho)^2 \,  v^2 \mu. \nonumber
	\end{eqnarray}
	
We now rewrite the integral $$\int  \rho^{-Q+2} \langle AXv, Xv \rangle$$ as follows.
	We have, 
	\begin{eqnarray*}
		&&\int \rho^{-Q+2} e^{\beta (\log \rho)^2} \langle AXu, Xu \rangle= \int \rho^{-Q+2} e^{\beta (\log \rho)^2}\langle A X e^{-\beta/2 (\log \rho)^2} v,X e^{-\beta/2 (\log \rho)^2} v \rangle\\
		&&=\int \rho^{-Q+2}  \bigg[\frac{(\beta \log \rho)^2}{\rho^2} \, \mu v^2+2\frac{(-\beta \log \rho)}{\rho^2} v Fv \mu+  \, <AXv, Xv> \bigg]\\
		&& \geq  \int \rho^{-Q+2 }  \bigg[\frac{(\beta \log \rho)^2}{\rho^2} \mu v^2+ \frac{\beta}{\rho^2} v^2 - C\beta \rho^{-1} (-\log \rho) v^2 \mu    +  \langle AX v, Xv \rangle\bigg].       \\
		&& \geq  \int \rho^{-Q+2}  \bigg[\frac{(\beta \log \rho)^2}{\rho^2} \mu v^2  +  \, \langle AX v, Xv \rangle\bigg]\ \text{(provided $R$ is small enough)}.
	\end{eqnarray*}
	Thus, we get 
	\begin{equation}\label{dom1}
	\int \rho^{-Q+2 } e^{\beta (\log \rho)^2}\langle AXu, Xu \rangle \geq  \int \rho^{-Q+2} \langle AX v, Xv \rangle + \beta^2 \int \rho^{-Q} (\log \rho)^2 v^2 \mu .  \end{equation}Using \eqref{dom1}   in \eqref{c11} we obtain, 
	\begin{eqnarray}\label{c12}
	&&\int \rho^{-Q+4} \, e^{\beta (\log \rho)^{2}} \, (\La (e^{-\frac{\beta}{2} (\log \rho)^{2}} v))^2 \mu^{-1} + \frac{5}{2} \beta \, \int \rho^{-Q+2 } e^{\beta (\log \rho)^2}\langle AXu, Xu \rangle \\
	&&\geq    3 \beta^2 \int_{B_R} \, \rho^{-Q} (\log \rho)^2  (Fv)^2  \mu +  \frac{13}{2} \beta^3 \int \rho^{-Q} (\log \rho)^2 \,  v^2 \mu. \nonumber
	\end{eqnarray}

	Finally, we show how to incorporate  the integral  $  \beta \int \rho^{-Q+2} e^{\beta (\log \rho)^{2}}  \langle A Xu, Xu\rangle $ in the left hand side of \eqref{har1} by an interpolation type argument as before.  We have,
	
\begin{align}\label{c15}
	&\beta  \int \rho^{-Q+2} e^{\beta (\log \rho)^2} (-\log \rho)^{\ve}  \langle A Xu, Xu\rangle 
	=- \beta \int \bigg< X(\rho^{-Q+2} e^{\beta (\log \rho)^2} ,  AX(e^{-\beta/2 (\log \rho)^2}v) \bigg> e^{-\beta/2 (\log \rho)^2}v \\ & -\beta \int \La(e^{-\beta/2 (\log \rho)^2}v) \rho^{-Q+2} e^{\beta/2 (\log \rho)^2}v.
	\notag
	\\
	& \leq - \beta \int \bigg< X(\rho^{-Q+2} e^{\beta (\log \rho)^2}),  AX(e^{-\beta/2 (\log \rho)^2}v) \bigg> e^{-\beta/2 (\log \rho)^2}v\notag\\ & + C \int \rho^{-Q+4}  e^{\beta (\log \rho)^2} (\La u)^2 \mu^{-1} + C\beta^2 \int \rho^{-Q}   v^2 
	\notag
	\\
	& \leq - \beta \int \bigg< X(\rho^{-Q+2} e^{\beta (\log \rho)^2},  AX(e^{-\beta/2 (\log \rho)^2}v) \bigg> e^{-\beta/2 (\log \rho)^2}v + C \int \rho^{-Q+4}  e^{\beta (\log \rho)^2} (\La u)^2 \mu^{-1} \notag\\
	&
	+ C \int \rho^{-Q+2} e^{\beta (\log \rho)^2} <A  Xu, Xu>, \notag
			\end{align}
			where in the last inequality in \eqref{c15} above, we used the estimate \eqref{c12}. 
			
			Now the following term in \eqref{c15} above, i.e. 
			\[
			- \beta \int \bigg< X(\rho^{-Q+2} e^{\beta (\log \rho)^2}  ),  AX(e^{-\beta/2 (\log \rho)^2}v) \bigg> e^{-\beta/2 (\log \rho)^2}v\]
			is estimated as follows. 
	
We have,

	\begin{align}\label{c19}
	&-\beta \int \langle X (\rho^{-Q+2} e^{\beta (\log \rho)^2}  ),  AX(e^{-\beta/2 (\log \rho)^2}v) \rangle e^{-\beta/2 (\log \rho)^2}v
	\\
	&=-\beta (-Q+2) \int \rho^{-Q}  \left[\mu \beta(-\log \rho) v^2+ \mu\, Fv \cdot v\right]    +\beta \int \rho^{-Q} 2\beta (-\log \rho)\left[(-\beta \log \rho) v^2\mu+  Fv \cdot v \mu \right]
	\notag
	\\
	&\leq \frac{5}{2}  \beta^3 \int \rho^{-Q} (\log \rho)^2  v^2 \mu + C \beta \int \rho^{-Q} (\log \rho)^2 (Fv)^2 \mu\ \text{(for all large $\beta$ and $R$ small)}
	\notag
	\\
	& \leq  C \int \rho^{-Q+4} e^{\beta (\log \rho)^2} (\La u)^2 \mu^{-1} + \left( \frac{25}{26} \beta  +C \right) \int \rho^{-Q+2}  \langle AXu, Xu \rangle,\notag
	\end{align}
	where in the last inequality above, we again used  the estimate \eqref{c12}. Thus from \eqref{c15} and \eqref{c19} we obtain 
	
	\begin{align}\label{grad}
	& \beta  \int \rho^{-Q+2} e^{\beta (\log \rho)^2}  \langle A Xu, Xu\rangle \leq 	C \int \rho^{-Q+4} e^{\beta (\log \rho)^2} (\La u)^2 \mu^{-1} + \left( C +\frac{25}{26} \beta\right) \int \rho^{-Q+2}  \langle AXu, Xu \rangle.
	\end{align}
	Now for all $\beta$ large  enough, we observe that the  following term in \eqref{grad} above, i.e.
	\[
	\left( C +\frac{25}{26} \beta \right) \int \rho^{-Q+2}  \langle AXu, Xu \rangle.\] can be absorbed in the left hand side of \eqref{grad} and we thus infer that the following estimate holds,
	
	\begin{align}\label{grad1}
	& \beta  \int \rho^{-Q+2} e^{\beta (\log \rho)^2}  \langle A Xu, Xu\rangle \leq 	C \int \rho^{-Q+4} e^{\beta (\log \rho)^2} (\La u)^2 \mu^{-1}	.
	\end{align}
	The desired estimate \eqref{har1} now follows from \eqref{c12} and \eqref{grad1}. 
		
\end{proof}


\section{Appendix}\label{ap}
\begin{proof}[Proof of Lemma \ref{lma3.3}]
 First note that \[
 F(b_{ij}X_i X_j \rho)=F(b_{ij}) \, X_iX_j \rho+(\frac{\rho}{\mu}\sum a_{qr} X_q\rho) \, b_{ij}  X_r(X_iX_j \rho).\]
 
 Now a standard tedious computation which uses the estimates in Lemma \ref{gauge}, Proposition \ref{prop3.4} and the hypothesis \eqref{H} shows that 
 \begin{equation}\label{cal1}
 \sum |(F(b_{ij}) X_i X_j \rho|  \leq C \psi.
 \end{equation}
 Consequently, we turn our attention to estimating the  term $(\frac{\rho}{\mu}\sum a_{qr} X_q\rho) \, b_{ij}  X_r(X_iX_j \rho).$ To do this, we need to compute the third derivatives of $\rho$. For that, we use the expressions for the second derivatives of $\rho$ as listed  in Lemma  \ref{secondder}. We first recall the expression for the derivatives of $\psi$ as in the proof of  Proposition 3.2 in \cite{GV}.  

	\begin{eqnarray*}
		X_l\psi=\begin{cases}
			2 \gamma \psi \frac{z_l}{|z|^2}-2\gamma \psi^2\frac{z_l}{\rho^2}, & \text{ for } 1\leq l\leq m\,\\
			-2\gamma(\gamma+1) \, \psi \, \frac{t_{l-m}|z|^{\gamma}}{\rho^{2\gamma+2}}, & \text{ for } m+1\leq l\leq N.
		\end{cases}
	\end{eqnarray*}
	We split our consideration into the following cases.
\begin{enumerate}
	\item  For $1\leq r \leq m$ and $1\leq i,j \leq m,$ we have:
	\begin{align*}
	&X_r(X_iX_j \rho)=-(2\gamma+1) X_r(z_iz_j) \frac{\psi^2}{\rho^3}-(2\gamma+1) z_iz_jz_r \frac{\psi^2}{\rho^3}\left[\frac{4\gamma}{|z|^2}-\frac{\psi}{\rho^2}(4\gamma+3)\right]\\
		&+X_r\left(2\gamma \frac{z_iz_j}{|z|^2}+\delta_{ij}\right) \frac{\psi}{\rho}+\left(2\gamma \frac{z_iz_j}{|z|^2}+\delta_{ij}\right) \, \frac{\psi}{\rho}z_r\left[\frac{2\gamma}{|z|^2}-\frac{\psi}{\rho^2}(2\gamma +1)\right]\\
		&=-(2\gamma+1) \left(z_j \delta_{ri}+z_i \delta_{rj}\right) \frac{\psi^2}{\rho^3}-(2\gamma+1) z_iz_jz_r \frac{\psi^2}{\rho^3}\left[\frac{4\gamma}{|z|^2}-\frac{\psi}{\rho^2}(4\gamma+3)\right]\\
		&+2\gamma \left(\frac{z_i\delta_{rj}+z_j \delta_{ri}}{|z|^2}-2\frac{z_iz_jz_r}{|z|^4}\right) \frac{\psi}{\rho}+\left(2\gamma \frac{z_iz_j}{|z|^2}+\delta_{ij}\right) \, \frac{\psi}{\rho}z_r\left[\frac{2\gamma}{|z|^2}-\frac{\psi}{\rho^2}(2\gamma +1)\right]. 
	\end{align*}	
	Since $|z|\leq \rho$ and  $\frac{|z|}{\rho}=\psi^{\frac{1}{2\gamma}}$, we have $|X_r(X_iX_j \rho)|\leq C \left[\frac{\psi^2}{\rho^2}+ \frac{\psi}{\rho |z|} \right] \leq C \frac{\psi^{1-\frac{1}{2\gamma}}}{\rho^2}.$ Thus, we have
	\begin{align} \label{Third1}
	\left|\left(\frac{\rho}{\mu}\sum a_{qr} X_q\rho\right) \, b_{ij}  X_r(X_iX_j \rho)\right| \leq C \, \rho^2 \mu^{\frac{1}{2\gamma}} \, \frac{\psi^{1-\frac{1}{2\gamma}}}{\rho^2} \leq C \psi.
	\end{align}
\vspace{.1in}

	\item  For $m+1\leq r \leq N$ and $1\leq i,j \leq m,$ we have:
	\begin{eqnarray*}	
		&&X_r(X_iX_j \rho)=(2\gamma+1)(\gamma+1) z_iz_j\frac{\psi^2}{\rho^{2\gamma+5}}t_{r-m}[4\gamma |z|^{\gamma}+3 \rho^{\gamma}\psi^{1/2}]\\
		&&-\left((2\gamma\frac{z_iz_j}{|z|^2}+\delta_{ij})(\gamma+1)\right)\left[\frac{\psi}{\rho^{2\gamma+3}}t_{r-m}[2\gamma |z|^{\gamma}+\rho^{\gamma}\psi^{1/2}]\right].
	\end{eqnarray*}	
	Since $|z|\leq \rho$, $\frac{|z|}{\rho}=\psi^{\frac{1}{2\gamma}}$ and $|t|\leq \rho^{\gamma+1}$, we have $|X_r(X_iX_j \rho)|\leq C  \frac{\psi^{1+\frac{1}{2}}}{\rho^2}$. Thus, we have
	\begin{align} \label{Third2}
	\left|\left(\frac{\rho}{\mu}\sum a_{qr} X_q\rho\right) \, b_{ij}  X_r(X_iX_j \rho)\right| \leq  C \psi.
	\end{align}
\vspace{.1in}

	\item  For $1\leq r \leq m$, ~$1\leq i\leq m$ and $1\leq j \leq k,$ we have:
	\begin{align*}
		&X_r(X_iX_{m+j} \rho)=-(2\gamma+1)(\gamma+1)X_r\left(\frac{z_it_j}{|z|^{\gamma}}\right) \frac{\psi^2}{\rho^3}-(2\gamma+1)(\gamma+1)\left(\frac{z_it_j}{|z|^{\gamma}}\right)\frac{\psi^2}{\rho^3}z_r\left[\frac{4\gamma}{|z|^2}-\frac{(4\gamma+3)\psi}{\rho^2}\right]\\
		&+\frac{\psi}{\rho}\left[\gamma(\gamma+1)X_r\left(\frac{z_it_j}{|z|^{\gamma+2}}\right)\right]+\left[\gamma(\gamma+1)\left(\frac{z_it_j}{|z|^{\gamma+2}}\right)\right]\frac{\psi}{\rho}z_r\left[\frac{2\gamma}{|z|^2}-\frac{\psi}{\rho^2}(2\gamma+1)\right]\\
		&=-(2\gamma+1)(\gamma+1)\left(\frac{\delta_{ri}t_j}{|z|^{\gamma}}-\gamma \frac{z_i z_rt_j}{|z|^{\gamma+2}}  \right) \frac{\psi^2}{\rho^3}-(2\gamma+1)(\gamma+1)\left(\frac{z_it_j}{|z|^{\gamma}}\right)\frac{\psi^2}{\rho^3}z_r\left[\frac{4\gamma}{|z|^2}-\frac{(4\gamma+3)\psi}{\rho^2}\right]\\
		&+\frac{\psi}{\rho}\left[\gamma(\gamma+1) \left(\frac{\delta_{ir}t_j}{|z|^{\gamma+2}}-(\gamma+2) \frac{z_i z_rt_j}{|z|^{\gamma+4}}\right)\right]+\left[\gamma(\gamma+1)\left(\frac{z_it_j} {|z|^{\gamma+2}}\right)\right]\frac{\psi}{\rho}z_r\left[\frac{2\gamma}{|z|^2}-\frac{\psi}{\rho^2}(2\gamma+1)\right].
	\end{align*}
	Since $|z|\leq \rho$, $\frac{|z|}{\rho}=\psi^{\frac{1}{2\gamma}}$ and $|t|\leq \rho^{\gamma+1}$, we have $|X_r(X_iX_{m+j} \rho)|\leq C\left[ \frac{\psi^{3/2}}{\rho^{2}}+\frac{\psi^{1/2}}{|z|^2} \right]  \leq C \frac{\psi^{1/2-\frac{1}{\gamma}}}{\rho^2}.$ Thus, we have
	\begin{align} \label{Third3}
	\left|\left(\frac{\rho}{\mu}\sum a_{qr} X_q\rho\right) \, b_{i(m+j)}  X_r(X_iX_{m+j} \rho)\right| \leq C \, \rho^2 \mu^{\frac{1}{2\gamma}} \, (\psi^{1/2+\frac{1}{2\gamma}}) \,\frac{\psi^{1/2-\frac{1}{\gamma}}}{\rho^2} \leq C \psi.
	\end{align}
\vspace{.2in}


	\item For $m+1\leq r \leq N$, ~$1\leq i\leq m$ and $1\leq j \leq k,$ we have:
	\begin{align*}
		&X_r(X_iX_{m+j} \rho)=-(2\gamma+1)(\gamma+1)X_r\left(\frac{z_it_j}{|z|^{\gamma}}\right) \frac{\psi^2}{\rho^3}+(2\gamma+1)(\gamma+1)^2\left(\frac{z_it_j}{|z|^{\gamma}}\right)\frac{\psi^2}{\rho^{2\gamma+5}}t_{r-m}\left[4\gamma |z|^{\gamma}+3 \rho^{\gamma}\psi^{1/2}\right]\\
		&+\frac{\psi}{\rho}\left[\gamma(\gamma+1)X_r\left(\frac{z_it_j}{|z|^{\gamma+2}}\right)\right]-\left[\gamma(\gamma+1)^2\left(\frac{z_it_j}{|z|^{\gamma+2}}\right)\right]\frac{\psi}{\rho^{2\gamma+3}}t_{r-m}\left[2\gamma |z|^{\gamma}+\rho^{\gamma}\psi^{1/2}\right]\\
		&=-(2\gamma+1)(\gamma+1)\left(\frac{z_i |z|^{\gamma} \delta_{rj}}{|z|^{\gamma}}\right) \frac{\psi^2}{\rho^3}+(2\gamma+1)(\gamma+1)^2\left(\frac{z_it_j}{|z|^{\gamma}}\right)\frac{\psi^2}{\rho^{2\gamma+5}}t_{r-m}\left[4\gamma |z|^{\gamma}+3 \rho^{\gamma}\psi^{1/2}\right]\\
		&+\frac{\psi}{\rho}\left[\gamma(\gamma+1)\left(\frac{z_i |z|^{\gamma} \, \delta_{rj}}{|z|^{\gamma+2}}\right)\right]-\left[\gamma(\gamma+1)^2\left(\frac{z_it_j}{|z|^{\gamma+2}}\right)\right]\frac{\psi}{\rho^{2\gamma+3}}t_{r-m}\left[2\gamma |z|^{\gamma}+\rho^{\gamma}\psi^{1/2}\right].
	\end{align*}
Since $|z|\leq \rho$, $\frac{|z|}{\rho}=\psi^{\frac{1}{2\gamma}}$ and $|t|\leq \rho^{\gamma+1}$, we have $|X_r(X_iX_{m+j} \rho)|\leq C \, \left[ \frac{\psi^2}{\rho^2}+ \frac{\psi}{\rho |z|} \right] \leq C \frac{\psi^{1-\frac{1}{2\gamma}}}{\rho^2 }$. Thus, we have
\begin{align} \label{Third4}
	\left|\left(\frac{\rho}{\mu}\sum a_{qr} X_q\rho\right) \, b_{i (m+j)}  X_r(X_iX_{m+j} \rho)\right|\leq C \, \rho^2 \mu^{-\frac{1}{2}} \, (\psi^{1/2+\frac{1}{2\gamma}}) \, \frac{\psi^{1-\frac{1}{2\gamma}}}{\rho^2 } \leq C \psi.
	\end{align}
\vspace{.2in}

	\item For $1\leq r,i\leq m$ and $1\leq j\leq k$ we have:
	\begin{align*}
		&X_r(X_{m+j} X_i\rho)=-(2\gamma+1)(\gamma+1) X_r\left(\frac{z_it_j}{|z|^{\gamma}}\right) \, \frac{\psi^2}{\rho^3}-(2\gamma+1)(\gamma+1) \left(\frac{z_it_j}{|z|^{\gamma}}\right) \, \frac{\psi^2}{\rho^3}z_r\left[\frac{4\gamma}{|z|^2}-\frac{(4\gamma+3)\psi}{\rho^2}\right]\\
		&=-(2\gamma+1)(\gamma+1) \left(\frac{\delta_{ri}t_j}{|z|^{\gamma}}-\gamma \frac{z_iz_rt_j}{|z|^{\gamma+2}}\right) \, \frac{\psi^2}{\rho^3}-(2\gamma+1)(\gamma+1) \left(\frac{z_it_j}{|z|^{\gamma}}\right) \, \frac{\psi^2}{\rho^3}z_r\left[\frac{4\gamma}{|z|^2}-\frac{(4\gamma+3)\psi}{\rho^2}\right].
	\end{align*}
	Since $|z|\leq \rho$, $\frac{|z|}{\rho}=\psi^{\frac{1}{2\gamma}}$ and $|t|\leq \rho^{\gamma+1}$, we have $|X_r(X_{m+j}X_i \rho)|\leq  C \frac{\psi^{3/2}}{\rho^2}$. Thus, we have
	\begin{align} \label{Third5}
	\left|\left(\frac{\rho}{\mu}\sum a_{qr} X_q\rho\right) \, b_{(m+j)i}  X_r(X_{m+j}X_i \rho)\right|\leq C \, \rho^2  \,\frac{\psi^{3/2}}{\rho^2}\leq C \psi.
	\end{align}
\vspace{.2in}

	\item For $m+1\leq r\leq N$, $1\leq i\leq m$ and $1\leq j\leq k$ we have:
	\begin{align*}
		&X_r(X_{m+j} X_i\rho)=-(2\gamma+1)(\gamma+1) X_r\left(\frac{z_it_j}{|z|^{\gamma}}\right) \, \frac{\psi^2}{\rho^3}+(2\gamma+1)(\gamma+1)^2 \left(\frac{z_it_j}{|z|^{\gamma}}\right) \, \frac{\psi^2}{\rho^{2\gamma+5}}t_{r-m}\left[4\gamma |z|^{\gamma}+3 \rho^{\gamma}\psi^{1/2}\right]\\
		&=-(2\gamma+1)(\gamma+1) \left(\frac{z_i \, |z|^{\gamma} \delta_{rj}}{|z|^{\gamma}}\right) \, \frac{\psi^2}{\rho^3}+(2\gamma+1)(\gamma+1)^2 \left(\frac{z_it_j}{|z|^{\gamma}}\right) \, \frac{\psi^2}{\rho^{2\gamma+5}}t_{r-m}\left[4\gamma |z|^{\gamma}+3 \rho^{\gamma}\psi^{1/2}\right].
	\end{align*}
	Since $|z|\leq \rho$, $\frac{|z|}{\rho}=\psi^{\frac{1}{2\gamma}}$ and $|t|\leq \rho^{\gamma+1}$, we have $|X_r(X_{m+j}X_i \rho)|\leq C  \frac{\psi^{3/2}}{\rho^2}$. Thus, we have
	\begin{align} \label{Third6}
	\left|\left(\frac{\rho}{\mu}\sum a_{qr} X_q\rho\right) \, b_{(m+j) i}  X_r(X_{m+j}X_i \rho)\right|\leq C \, \rho^2  \psi^{-1/2} \frac{\psi^{3/2}}{\rho^2} \leq C \psi.
	\end{align}
\vspace{.2in}

	\item For $1\leq r\leq m$ and $1\leq i, j\leq k$ we have:
	\begin{align*}
		&X_r(X_{m+i}X_{m+j} \rho)=-(2\gamma+1)(\gamma+1)^2 X_r\left(\frac{t_jt_i}{|z|^{2\gamma}}\right) \, \frac{\psi^2}{\rho^3}-(2\gamma+1)(\gamma+1)^2 \left(\frac{t_jt_i}{|z|^{2\gamma}}\right) \, \frac{\psi^2}{\rho^3}z_r\\
		&\left[\frac{4\gamma}{|z|^2}-\frac{(4\gamma+3)\psi}{\rho^2}\right]	+(\gamma+1) \delta_{ij} \frac{\psi}{\rho} \, z_r\left[\frac{2\gamma}{|z|^2}-\frac{\psi}{\rho^2}(2\gamma+1)\right]\\
		&=-(2\gamma+1)(\gamma+1)^2 \left(-2\gamma \frac{t_iz_rt_i}{|z|^{2\gamma+1}}\right) \, \frac{\psi^2}{\rho^3}-(2\gamma+1)(\gamma+1)^2 \left(\frac{t_jt_i}{|z|^{2\gamma}}\right) \, \frac{\psi^2}{\rho^3}z_r\\
		&\left[\frac{4\gamma}{|z|^2}-\frac{(4\gamma+3)\psi}{\rho^2}\right]	+(\gamma+1) \delta_{ij} \frac{\psi}{\rho} \, z_r\left[\frac{2\gamma}{|z|^2}-\frac{\psi}{\rho^2}(2\gamma+1)\right].
	\end{align*}
	Since $|z|\leq \rho$, $\frac{|z|}{\rho}=\psi^{\frac{1}{2\gamma}}$ and $|t|\leq \rho^{\gamma+1}$, we have $|X_r(X_{m+i}X_{m+j} \rho)|\leq C \, \left[\frac{\psi}{\rho} +\frac{\psi}{\rho |z|}\right]\leq C \frac{\psi^{1-\frac{1}{2\gamma}}}{\rho^2}$. Thus, we have
	\begin{align} \label{Third7}
	\left|\left(\frac{\rho}{\mu}\sum a_{qr} X_q\rho\right) \, b_{(m+i)(m+j)}  X_r(X_{m+i}X_{m+j} \rho)\right|\leq C \, \rho^2 \mu^{\frac{1}{2\gamma}} \, \frac{\psi^{1-\frac{1}{2\gamma}}}{\rho^2} \leq C \psi.
	\end{align}
\vspace{.2in}

	\item For $m+1\leq r\leq N$ and $1\leq i, j\leq k$ we have:
	\begin{align*}
		&	X_r(X_{m+i} X_{m+j}\rho)=-(2\gamma+1)(\gamma+1)^2 X_r\left(\frac{t_jt_i}{|z|^{2\gamma}}\right) \, \frac{\psi^2}{\rho^3}+(2\gamma+1)(\gamma+1)^3 \left(\frac{t_jt_i}{|z|^{2\gamma}}\right) \, \frac{\psi^2}{\rho^{2\gamma+5}}\\
		&t_{r-m} \left[4\gamma |z|^{\gamma}+3 \, \rho^{\gamma} \, \psi^{1/2}\right]-(\gamma+1)^2 \delta_{ij} \frac{\psi}{\rho^{2\gamma+3}}t_{r-m} \left[2\gamma |z|^{\gamma}+ \rho^{\gamma}\psi^{1/2}\right]\\
		&=-(2\gamma+1)(\gamma+1)^2 \left(\frac{t_j|z|^{\gamma} \delta_{ri}}{|z|^{2\gamma}} + \frac{t_i|z|^{\gamma} \delta_{rj}}{|z|^{2\gamma}} \right) \, \frac{\psi^2}{\rho^3}+(2\gamma+1)(\gamma+1)^3 \left(\frac{t_jt_i}{|z|^{2\gamma}}\right) \, \frac{\psi^2}{\rho^{2\gamma+5}}\\
		&t_{r-m} \left[4\gamma |z|^{\gamma}+3 \, \rho^{\gamma} \, \psi^{1/2}\right]-(\gamma+1)^2 \delta_{ij} \frac{\psi}{\rho^{2\gamma+3}}t_{r-m} \left[2\gamma |z|^{\gamma}+ \rho^{\gamma}\psi^{1/2}\right].
	\end{align*}
\end{enumerate}
Since $|z|\leq \rho$, $\frac{|z|}{\rho}=\psi^{\frac{1}{2\gamma}}$ and $|t|\leq \rho^{\gamma+1}$, we have $|X_r(X_{m+i}X_{m+j} \rho)|\leq C \frac{\psi^{3/2}}{\rho^2}$. Thus, we have
\begin{align} \label{Third8}
	\left|\left(\frac{\rho}{\mu}\sum a_{qr} X_q\rho\right) \, b_{(m+i)(m+j)}  X_r(X_iX_j \rho)\right|\leq C \, \rho^2 \mu^{-\frac{1}{2}} \, \frac{\psi^{3/2}}{\rho^2}  \leq C \psi.
	\end{align}
	The estimate \eqref{third} now follows from \eqref{cal1}-\eqref{Third8}. 
 
\end{proof}

\end{document}